\newtheorem{theorem}{Theorem}[section]
\newtheorem{lemma}[theorem]{Lemma}
\theoremstyle{definition}
\newtheorem{definition}[theorem]{Definition}
\theoremstyle{remark}
\newtheorem{remark}[theorem]{Remark}
\newcommand{\pp}{\mathbb{P}}
\newcommand{\E}{\mathbb{E}}
\renewcommand{\P}[2][]{\mathbb{P}_{#1}\left(#2\right )}
\newcommand{\R}{\mathbb{R}}
\newcommand{\N}{\mathbb{N}}
\newcommand{\ind}[1]{\mathds{1}_{#1}}
\renewcommand{\d}{\mathrm{d}}
\newcommand{\T}{\mathbb{T}}
\renewcommand{\L}{\mathcal{L}}
\newcommand{\cadlag}{c\`adl\`ag~}
\def\shalf{\text{$\frac{1}{2}$}}
\def\sR{{\mathbb R}}
\def\cH{\mathcal{H}}
\def\sR{{\mathbb R}}
\def\cH{\mathcal{H}}
\def\bb{\begin{equation}} \def\ee{\end{equation}}
\def\bbn{\begin{equation*}} \def\een{\end{equation*}}
\newcommand{\PP}{{\mathbb P}}
\DeclareMathOperator*{\prox}{prox}
\newcommand{\ExtendedE}{\bar{E}}
\newcommand{\law}{\operatorname{law}}
\newcommand{\lawXnull}{\nu_{0-}}
\newcommand{\Lmin}{\underline{\Lambda}}
\renewcommand{\L}{\Lambda}
\newcommand{\ellfunc}{\lambda}
\newcommand{\EmpM}{\mu}
\newcommand{\EmpMext}{\xi}
\newcommand{\embedding}{\hat \iota}
\newcommand{\NplayerGame}{controlled $N$-particle system}
\newcommand{\Lpertmin}{\underline{\tilde{L}}{}}
\newcommand{\LpertminLambda}{\underline{\tilde{\L}}{}}
\newcommand{\betaNstar}{(\beta^\star)^N}
\newcommand\mydots{\hbox to 1em{.\hss.\hss.}}
\title{Optimal bailout strategies resulting from  the drift controlled supercooled Stefan problem}
\author{Christa Cuchiero\thanks{Vienna University, Department of Statistics and Operations Research, Data Science @ Uni Vienna, Kolingasse 14-16, A-1090 Wien, Austria, christa.cuchiero@univie.ac.at}
\and Christoph Reisinger \thanks{Mathematical Institute and Oxford Man Institute of Quantitative Finance, University of Oxford, Andrew Wiles Building, Radcliffe Observatory Quarter, OX2 6GG, Oxford, U.K., christoph.reisinger@maths.ox.ac.uk}
\and Stefan Rigger \thanks{Vienna University, Department of Statistics and Operations Research, Kolingasse 14-16, A-1090 Wien, Austria, stefan.rigger@univie.ac.at}}
\date{}
\begin{document}
\maketitle

\begin{abstract}
We consider the problem faced by a central bank which bails out distressed financial institutions that pose systemic risk to the banking sector.
In a structural default model with mutual obligations, the central agent seeks to inject a minimum amount of cash 
 in order to limit defaults to a given proportion of entities.
We prove that the value of the central agent's control problem converges as the number of defaultable institutions goes to infinity,
and that it satisfies  a drift controlled version of the supercooled Stefan problem.
We compute optimal strategies in feedback form by solving numerically 
a regularized version of the corresponding  mean field control problem using a policy gradient method.
Our simulations show that the central agent's optimal strategy is to subsidise banks whose equity values lie in a non-trivial time-dependent region.
\end{abstract}

\noindent
{\bf Keywords: } Systemic risk, Mean field control, Supercooled Stefan problem, \\
Propagation of chaos, Bail-outs

\section{Introduction}

In this paper, we analyse a simple mathematical model for a central bank that optimally injects cash into a banking system with interbank lending in order to
prevent systemic default events.
By way of introduction, we first review known results on the dynamics without intervention and its relation to the supercooled Stefan problem.
We then describe the optimisation problem faced by the central agent and discuss its setting within the literature on Mean Field Control (MFC) problems together with this paper's
contributions.

\subsection{Interbank lending and the supercooled Stefan problem}\label{sec:1.1}

We study a market with $N$ financial institutions and their equity value process $X=(X_t^i)$ for $t\in [0,T]$ with finite time horizon $T >0$ and $i=1,\ldots, N$. We interpret $X^i$  in the spirit of structural credit risk models as the value of assets minus liabilities. Hence, we consider an institution to be defaulted if its equity value hits 0.
We refer the reader to \cite{merton1974pricing} for the classical treatment as well as to \cite{itkin2017structural} and the references therein for a discussion of such models in the
present multivariate context with mutual obligations.

We consider specifically a stylised model of interbank lending where all
firms are ex\-change\-able, their equity values are driven by Brownian motion,
and where
the default of one firm leads to a uniform downward jump in the equity value of the surviving entities.
The latter effect is the crucial mechanism for credit contagion in our model as it describes how the default of one firm affects the balance sheet of others.
Here, we follow \cite{HLS,NS1,lipton2019semi}) to assume that the $X^i$ satisfy 
\begin{align}\label{eq::uncontrolled}
	X^i_t &= X^i_{0-} 
	+ B^i_t - \alpha \frac{1}{N} \sum_{i=1}^N \mathds{1}_{\{\tau^i \le t\}},\;\;
\end{align}
where $\tau^i = \inf\{t: X^i_t \le 0 \}$, $X^i_{0-}$ are non-negative i.i.d.\ random variables, $(B^i)_{1\le i\le N}$ is an $N$-dimensional standard Brownian motion, independent of
$X_{0-}=(X^i_{0-})_{1\le i\le N}$, 
and $\alpha \ge 0$ is a given parameter measuring the 
interconnectedness in the banking system. 
The initial condition reflects the current state of the banking system. This might include minimal capital requirements prescribed by the regulator as conditions to enter the banking system, but we do not consider this question explicitly.

Even this highly stylised  simple system produces complex behaviour for large pools of firms, including systemic events where cascades of defaults caused by interbank lending instantaneously wipe out significant proportions of the firm pool (see \cite{HLS,NS1,DNS}).

One way of analysing this is to pass to the mean-field limit for $N \to \infty$. It is known (see, e.g., \cite{DIRT}) 
that the interaction (contagion) term in \eqref{eq::uncontrolled} converges  (in an appropriate sense) to a deterministic function $\Lambda: [0,T] \rightarrow [0,\alpha]$ as  $N\rightarrow \infty$, i.e.,
\[
\alpha \frac{1}{N} \sum_{i=1}^N \mathds{1}_{\{\tau^i \le t\}} \; \rightarrow \; \Lambda_t. 
\]
Moreover, the $X^i$ are asymptotically independent with the same law as a process $X$ which together with $\Lambda$
satisfies a probabilisitic version of the \emph{supercooled Stefan problem}, namely 
\begin{equation}\label{eq::init1}
	X_t = X_{0-} + B_t - \Lambda_t,\;\; t\ge 0, 
\end{equation}
where $\Lambda$ is subject to the constraint
\begin{equation}\label{eq::init2}
	\Lambda_t = \alpha\pp\Big(\inf_{0\le s\le t} X_s \le 0\Big),\;\;t\ge 0.
\end{equation}
Here, $B$ is a standard Brownian motion independent of the random variable $X_{0-}$, which has the same law as all $X^i_{0-}$. We refer to \cite{DNS}
for a discussion on
how this probabilistic formulation relates to the classical PDE version of the supercooled Stefan problem.


From a large pool perspective (see \cite{HLS, NS1}), $X_t$ may be viewed as the equity value of a representative bank and
$\tau = \inf \{t\ge 0: X_t \le 0\}$
as its default time, while  $\Lambda_t$ describes the interaction with other institutions under 
the assumption of uniform lending and exchangeable dynamics.
In particular, 
$\pp(\inf_{0\le s\le t} X_s \le 0)$
can be interpreted as the fraction of defaulted banks at time $t$ and consequently $\Lambda_t$ as the loss that the default
of these entities has caused for the survivors.


It is known that solutions to \eqref{eq::init1}, \eqref{eq::init2} are not unique in general (see \cite{DIRT,CRS}), which explains the need to single out so-called \emph{physical} solutions that are meaningful from an economic and physical perspective.  Under appropriate conditions  on $X_{0-}$, these physical solutions are characterised by open intervals with smooth $t \mapsto \Lambda_t$, separated by
points at which this dependence may only be H{\"o}lder continuous or even exhibit a discontinuity, an event frequently referred to as \emph{blow-up}
(see \cite{DNS}). If the mean of the initial values is close enough to zero relative to the interaction parameter $\alpha$, a jump necessarily happens (see \cite{HLS}).

In case a discontinuity does occur at some $t\ge 0$, the following restriction on the jump size defines such a \emph{physical solution}:
\begin{equation}\label{physical}
	\Lambda_t-\Lambda_{t-}=\inf\Big\{x>0:\,\pp\big(\tau\ge t,\,X_{t-}\in(0,x]\big)<\frac{x}{\alpha}\Big\},\;\;t\ge 0, 
\end{equation} 
with $\Lambda_{t-}:=\lim_{s\uparrow t} \Lambda_s$ and $X_{t-}:=\lim_{s\uparrow t} X_s$.
By the results of \cite{DNS}, the above condition on the jumps of $\Lambda$ uniquely defines a solution to \eqref{eq::init1} and \eqref{eq::init2} under some restrictions on the initial condition $X_{0-}$. 
For future reference, we also introduce the concept of \emph{minimal solutions}, which we know to be physical whenever the initial condition is integrable
(see \cite{CRS}).
We call a solution $\underline{\Lambda}$ \emph{minimal}, if for any other $X$ that satisfies \eqref{eq::init1} with 
loss process $\Lambda$ given by \eqref{eq::init2}, we have that 
\begin{equation}\label{eq:defminimal}
\underline{\Lambda}_t \leq \Lambda_t, \quad t \geq 0.
\end{equation}
Note that by combining the results of \cite{CRS} and \cite{DNS} the minimal solution is the unique physical one, and thus ecnomoically meaningful one, 
if the initial condition satisfies the assumptions of \cite{DNS}.

\subsection{The central agent's optimisation problem}

The purpose of this paper is to analyse strategies that a central bank (\emph{central agent}) can take to limit the number of defaults. They achieve this by controlling the rate of capital injected to distressed institutions. 
That is to say, rather than bailing out firms which are already defaulted, the central agent intervenes already ahead of the time their equity values become critical. This rate of capital\footnote{
As we are working in continuous time, we also assume that the central agent is able to inject money continuously.
At first sight this is a bold approximation of reality but allows us to work without an a-priori fixed time grid determining when the central agent reacts.
}
received by bank $i$ is determined by processes $\beta^i$ and added to \eqref{eq::uncontrolled}.
In the finite dimensional situation, the $X^i$ then satisfy 
\begin{align}\label{eq::controlledparticle}
	X^i_t &= X^i_{0-} + \int_0^t \beta^i_s \, ds 
	+ B^i_t - \alpha \frac{1}{N} \sum_{i=1}^N \mathds{1}_{\{\tau^i \le t\}}. \;\;
\end{align}

A mathematically similar problem has been studied in \cite{TT:18}. There the question of finding an optimal drift in order to maximize the
number of Brownian particles that stay above $0$ is treated, however without the singular interaction term appearing in \eqref{eq::controlledparticle}.

In anticipation of a propagation of chaos result (proved in Section \ref{sec:convergence}), we therefore consider an extension of \eqref{eq::init1} and \eqref{eq::init2} with a drift process $\beta$, i.e.,
\begin{align}\label{eq::controlled}
	X_t &= X_{0-} + \int_0^t \beta_s \, ds + B_t - \Lambda_t,\;\; \\
	\label{eq::controlled2}
	\Lambda_t &= \alpha\pp\Big(\inf_{0\le s\le t} X_s \le 0\Big).\;\;
\end{align}
Throughout the paper we will consider a constraint $0 \leq \beta_t \leq b_{\text{max}}$, which amounts to the assumption that at any point in time the central agent has limited resources for the capital injections. We will specify further technical conditions on $\beta$ later, which allow us to show that indeed the finite system converges
in a suitable sense to this McKean--Vlasov equation. 

We now consider a central agent who injects capital into a representative bank at rate $\beta_t$ at time $t$ in order 
to keep
\[
L_{T-}(\beta) = \pp\Big(\inf_{0\le s< T} X_s \le 0\Big) = \Lambda_{T-}(\beta)/\alpha,
\]
that is the number of defaults
that occur before\footnote{We consider $L_{T-}$ rather than $L_{T}$ in the constraint because $\Lambda$ may have a jump discontinuity precisely at $T$,
which would considerably complicate the analysis. However, by \cite[Corollary 2.3]{LS}, we know that solutions to \eqref{eq::uncontrolled} cannot have discontinuities
after time $\alpha^2/(2\pi)$. We can derive an analogous result for the controlled case using Girsanov's theorem if, in addition to the pointwise bound on $\beta$, we assume a bound on the total cost over the infinite horizon, i.e.\ $\int_{0}^{\infty} \beta_s~\mathrm{d}s \leq c_{\text{max}}$ for some $c_{\text{max}} > 0$. In this setting, by choosing $T$ sufficiently large, we then do not need to distinguish between $L_{T-}(\beta)$ and $L_{T}(\beta)$.}
a given time $T$,
below a specified threshold $\delta$, while
minimising the expected total cost
\[
C_T(\beta) = \E \Big[ \int_0^T \beta_t \, dt \Big].
\]
We therefore consider the following constrained optimisation problem: For given $\delta$, the central agent solves 
\begin{equation}\label{constr_opt}
C_T(\beta) \longrightarrow \min_{\beta} \qquad \text{subject to} \qquad L_{T-}(\beta) \le \delta.
\end{equation}
Define now for $\gamma \in \mathbb{R}_+$ the Lagrange function 
$
\mathcal{L}(\beta, \gamma) = C_T(\beta)+ \gamma (L_{T-}(\beta) - \delta)
$
and use it to express the constrained optimization problem as an unconstrained one, namely $ \min_{\beta} \max_{\gamma \in \mathbb{R}_+} \mathcal{L}(\beta, \gamma)$, which holds true since
\[
 \max_{\gamma \in \mathbb{R}_+} \mathcal{L}(\beta, \gamma)= \begin{cases} C_T(\beta) & \text{if } L_{T-} \leq \delta\\
\infty  & \text{else.}
 \end{cases}
\]
Assuming the absence a duality gap\footnote{Proving the  absence a duality gap  seems difficult as standard minimax theorems cannot be easily applied. Moreover, our numerical experiments suggest that at least for certain values of $\delta$ it might fail to hold true.} (or equivalently the existence of a saddlepoint $(\beta^\star, \gamma^*)$ of  $\mathcal{L}$, i.e.
$
\mathcal{L}(\beta^\star, \gamma) \leq \mathcal{L}(\beta^\star, \gamma^*) \leq \mathcal{L}(\beta, \gamma^*) 
$
for all $\beta, \gamma$), 
then  we can interchange the $\min$ and $\max$ and solve the dual problem  $ \max_{\gamma \in \mathbb{R}_+} \min_{\beta} \mathcal{L}(\beta, \gamma)$.

For these reasons we shall from now on consider the inner optimisation problem for fixed $\gamma >0$ (which can -- due to the complementary slackness condition -- only hold if   the  constraint is binding, i.e.~$L_{T-}(\beta)=\delta$). If there is no duality gap, the optimal $\gamma$ for a prespecified threshold $\delta$ can in turn be determined by solving the outer optimisation problem, i.e.~$\max_{\gamma \in \mathbb{R}_+} g(\gamma)$ where $g(\gamma)= \min_{\beta} \mathcal{L}(\beta, \gamma)$.

Writing $\underline{X}(\beta)$ for the solution process associated with the minimal solution $\underline{\Lambda}(\beta)$, analogously defined as in \eqref{eq:defminimal} but now for \eqref{eq::controlled},
we thus minimise the following objective function
\begin{eqnarray}
\nonumber
	J(\beta) &=& \E \Big[ \int_0^T \beta_t \, dt \Big] + \gamma \, \pp\Big(\inf_{0\le s < T} \underline{X}_s(\beta) \le 0\Big) \\
	&=& \E \Big[ \int_0^T \beta_t \, dt + \gamma \,  \mathds{1}_{\{\widehat{\underline{X}}_{T-} = 0\}} \Big], 
	\label{eq::objective}
\end{eqnarray}
where $\widehat{\underline{X}}=\underline{X}_t \mathds{1}_{\{\tau> t\}}$ is the absorbed minimal solution process and $\tau$ the default time. Note that the only difference between $J(\beta)$ and $\mathcal{L}(\beta, \gamma)$ is the constant $-\gamma \delta$, which however does not play a role in the optimisation over $\beta$. 
By varying $\gamma$, we can therefore trace out
pairs of costs and losses which are solutions to \eqref{constr_opt} for different $\delta$.
The  Lagrange multiplier $\gamma$ (as a function of $\delta$) can then be interpreted as shadow price of preventing further defaults. Indeed, as for usual constrained optimization problems,  the optimal cost $C^{\star}_T$ seen as a function of the loss level $\delta$ 
satisfies under certain technical conditions
\[
\partial_{\delta} C_T^{\star}(\delta)=\lim_{h \to  0}\frac{C^\star_T(\delta +h)-C^\star_T(\delta )}{h}= - \gamma(\delta).
\]
As we show numerically in  Section \ref{sec:3a}, the optimal loss $L_{T-}^\star$ as a function of $\gamma$  is monotone decreasing, so that for  large enough $\gamma$ (and $b_{\max}$), the threshold $\delta$ becomes small enough to avoid systemic events.

Note that, by the arguments at the end of Section \ref{sec:1.1}, using the minimal solution in the optimisation task is the only economically meaningful concept because non-physical solutions (with a potential higher
probability of default) cannot be realistically justified, in particular when seen as limits of  particle systems. We refer to \cite[Section 3.1]{DIRT} for examples of such non-physical solutions.

Both from a theoretical and numerical perspective, 
we shall 
analyse 
the  objective function \eqref{eq::objective} together with the dynamics \eqref{eq::controlled}, 
which
is  a non-standard \emph{MFC problem with a singular interaction} through hitting the boundary.
As we show in  Section \ref{sec:convergence}, in particular Theorem \ref{thm::optimizersconverge}, optimisation of the McKean--Vlasov equation \eqref{eq::controlled} yields the same result as first optimising in the $N$-particle system and then passing to the limit. In particular, by Theorem \ref{thm:epsilonoptimality}, optimizers of the McKean--Vlasov equation \eqref{eq::controlled} are $\epsilon$-optimal for the $N$-particle system.
This then justifies our numerical implementation described in Section \ref{sec:3a} where we deal directly with the MFC problem.

\subsection{Relation to the literature}

\subsubsection*{Theory of MFC problems and applications to systemic risk}

Due to the big amount of literature on MFC problems 
we focus here on relatively recent  works 
and mostly on MFC and not on the related concept of Mean Field Games (MFG) as introduced in \cite{lasry2007mean} and \cite{huang2006large}. 
We refer to \cite{CDL:13, carmona2015forward, CL:21} for definitions of the MFC and MFG optima in  general set-ups and discussions on the differences. As we here deal with a central agent our optimization problem corresponds to a Pareto optimum  where all the agents cooperate to minimize the 
costs. Therefore MFC is the appropriate concept.
Note that instead of MFC the terminology
\emph{McKean--Vlasov control} 
is often also used.

Similarly as for classical optimal control, dynamic programing principles have also been derived for 
MFC problems and can be found in \cite{PW:17, DPT:19}. We also refer to  
\cite{pham2018bellman, BFY:15, achdou2015system,lauriere2014dynamic}, where in diffusion set-ups formulations using a Hamilton-Jacobi Bellman (HJB) equation on the space of probability measures for closed-loop controls (also called feedback controls) are deduced. In the recent work \cite{GPW:20} this has been generalized to jump diffusion processes.
  For a dynamic programming principle for open-loop controls we refer to  \cite{bayraktar2018randomized},  and  to \cite{carmona2015forward, acciaio2019extended} for  a characterisation by a stochastic maximum principle.


We are here interested in feedback controls and would therefore need to solve the corresponding HJB equation (as e.g.~in \cite{pham2018bellman}), i.e. an
infinite dimensional fully
nonlinear partial differential equatios (PDE) of second order in the Wassertein space of
probability measures. Solving such an equation is challenging, since it involves  computing measure derivatives, which is numerically  intractable. In our context the situation is even more intricate due to the singular interactions through the boundary. Indeed, even under the (usually not satisfied) assumption that $t \mapsto \Lambda_t$ is $C^1$, the problem is far beyond a standard MFC framework. In this case, 
$\Lambda$ in \eqref{eq::controlled} can be replaced by $\int_0^{\cdot} \dot \Lambda_t dt $, thus a time derivative of
the measure component, which makes the problem ‘non-Markovian’.
Moreover, we deal with subprobability measures 
describing the marginal distributions of the absorbed process 
$\widehat{X}= X_t \mathds{1}_{\{\tau> t\}}$ which governs the underlying dynamics. 
Note also
that the total mass of these subprobability measures as well as $\widehat{X}$ itself can  exhibit jumps if $\Lambda$ is discontinuous,
and that these jumps  emerge endogenously from the feedback mechanism.

This is in contrast to some other recent papers  where jumps are exogenously given. 
For instance, the recent articles \cite{GPW:20, agram2021fokker} consider the control of (conditional) McKean--Vlasov dynamics with jumps and associated HJB-PIDEs,  
while in \cite{ACDZ:21} a stochastic maximum principle is derived to analyse a mean-field game with random jump time penalty.

In the context of  systemic risk and contagion via singular interactions through hitting times the paper  \cite{NS:20} is especially relevant. There a game in which the banks determine their (inhomogeneous) connections strategically is analysed. It turns out that by a reduction of lending to critical institutions in equilibrium systemic events can be avoided.
A model involving singular interaction through hitting the boundary is also considered in  \cite{EIL:20}. 
There, an optimization component  is incorporated via a quadratic functional that allows
the institutions to control
parts of their dynamics in order to minimize their expected risk, which then leads to a MFG problem.
The quadratic cost functional is inspired by the earlier work \cite{CFS15}, which also
treats the mean-field game limit of a system of banks who  control their borrowing from, and lending to, a central bank, and where the interaction comes from interbank lending. Let us finally mention the very recent article \cite{ADFL:22} 
which applies reinforcement learning to a model
that can be considered as an extension of \cite{CFS15} 
adding a cooperative game component within certain groups of banks.

In the wider context of interaction through boundary absorption, a few works on mean-field games have also appeared recently.
In \cite{campi2018n}, the players' dynamics depends on the empirical measure representing players who have not exited a domain.
This is extended to smooth dependence on boundary losses prior to the present time in \cite{campi2019n},
and to the presence of common noise in \cite{burzoni2021mean}.
The economic motivation for these models are, among others, systemic risk and bank runs.

\subsubsection*{Numerics for MFC and MFG problems}

Among the numerical methods proposed for MFC and MFG problems, we refer to \cite{carmona2019convergence} for a policy gradient-type method
where feedback controls are approximated by neural networks and optimised for a given objective function;
to \cite{fouque2020deep} and again to \cite{carmona2019convergence} for a mean-field FBSDE method, generalising the deep BSDE method to mean-field dependence and in the former case to delayed effects;
 and
to \cite{achdou2020mean} for a survey of methods for the coupled PDE systems, mainly
in the spirit of the seminal works \cite{achdou2010mean, achdou2016mean}; see also a related semi-Lagrangian scheme in \cite{carlini2014semi}  and a gradient method and penalisation approach in \cite{pfeiffer2017numerical}.

Beside these works on PDE systems,  a lot of research has recently been conducted on how
to apply (deep) reinforcement or Q-learning to solve MFC and MFG problems or combinations thereof (see e.g., \cite{guo2019learning, angiuli2022unified, angiuli2021reinforcement, angiuli2022reinforcement} and the references therein). We also refer to two recent survey articles \cite{hu2022recent, lauriere2022learning} on machine learning tools for optimal control and games. The first article focuses on methods  that try to solve the problems  by relying on exact computations of gradients exploiting
the full knowledge of the model, while the second one presents learning tools that aim at solving  MFC and MFG problems in a model-free fashion.

In our modeling and numerical approach we have full knowledge of the model, but 
what distinguishes it from the existing literature is the particular singular interaction through the boundary absorption. This means that 
all the discussed numerical schemes and methods need to be adapted to accommodate the current special situation. We opted for an adaptation of the 
policy gradient-type method considered in \cite{reisinger2021fast}, since it shares the same computational complexity as the gradient-based algorithms in e.g. \cite{archibald2020efficient, kerimkulov2021modified, pfeiffer2017numerical}, but enjoys an accelerated convergence rate and can handle general convex nonsmooth costs (including  constraints), which allows to incorporate the current objective function.  It exploits a forward-backward splitting approach and iteratively refines the
approximate controls based on the gradient of the cost, evaluated through a coupled system of nonlocal linear PDEs. The precise algorithm is outlined in Section \ref{sec:3a}.



\subsection{Contributions and findings}

As already mentioned, our model differs in a number of fundamental points from the existing literature:  first, while 
\cite{CFS15, campi2018n,campi2019n,NS:20, EIL:20, burzoni2021mean} study mean-field game solutions and equilibria of $N$-player games, where each player maximises their own objective, we study the problem of a central planner who specifically seeks to control the number of defaults.
Second, in contrast to \cite{campi2019n}, where the coefficients of the players' processes may depend on the loss process
and to \cite{burzoni2021mean}, which further includes a driver which is a smoothed version of $L$ (hence modeling delayed effects of hitting the boundary), we consider
the firm values driven by $L$ directly, resulting in an instantaneous effect of defaults and the emergence of systemic events.
Third, the techniques we use are also entirely different from those in these preceding works. Instead of relying on techniques for martingale problems used e.g. in
\cite{lacker2017limit}  to derive a limit theory for controlled McKean--Vlasov dynamics,  
we extend the method from \cite{CRS}
to show the convergence of the finite system to the mean-field limit. 

Moreover, we provide a numerical solution
adapting the approach of \cite{reisinger2021fast} to the current setting. This means first of all to express $\Lambda_t$ in the dynamics \eqref{eq::controlled} and in the loss function explicitly in terms of the distribution of $\widehat{X}_t$, 
which can be (formally) achieved through $\partial_x p(t,0)$, i.e.\ the spatial derivative of its density at $0$ (assuming it exists). 
To cast the absorbed process formally into a (more) standard McKean--Vlasov framework, instead of \eqref{eq::controlled}, we write the dynamics in a form where the drift and diffusion coefficients are multiplied with the Heaviside function (in the state). Both the computation of $\partial_x p(t,0)$ and the presence of the Heaviside function need some regularization, which is treated in Section \ref{sec:reg}. This regularized version then allows to apply the policy gradient descent method of \cite{reisinger2021fast} where we compute the gradient via a 
coupled forward-backward PDE system (instead of a particle method as in \cite{reisinger2021fast}). 
In particular, the forward problem is given by a smoothed version of the Stefan problem with a drift term determined by the feedback control, while the backward problem determines a decoupling field
of the adjoint process.

From an economic point of view, our findings indicate a high sensitivity on the parameter $\gamma$ in \eqref{eq::controlled}.  As shown in Figure \ref{fig:C_L_alpha_15},
for a certain value of $\gamma$ and in a regime where $\alpha$ triggers jump discontinuities in the uncontrolled regime, the optimal control strategy switches from 
not avoiding a jump to avoiding a jump.
Moreover, our numerical experiments suggest that it is not possible to vary the capital injection to control the \emph{size} of the jump continuously, since the possible jump size is restricted
by the physical jump constraint  
\eqref{physical}. Viewed differently, a large systemic event can happen if the central agent withdraws a small amount of capital from a scenario without jumps. 

Summarizing, the main contributions of the present paper are as follows:
\begin{itemize}
\item
We show  convergence of the system with $N$ agents to the mean-field limit (see Section \ref{sec:convergence}),
including well-posedness of the central agent's optimisation problem, i.e.\ the existence of unique minimal solutions to the Stefan problem as given by \eqref{eq::controlled}-\eqref{eq::controlled2} for any suitably regular control process $\beta$ and the existence of an optimal control which minimises \eqref{eq::objective}.

\item
We propose a numerical scheme (see Section~\ref{sec:3a}) based on policy gradient iteration, where the gradient is computed via coupled forward and backward PDEs satisfied by the density of a regularized version of the equity value process and a decoupling field corresponding to an adjoint process, respectively.
\item
We analyse by way of detailed numerical studies the structure of the central agent's optimal strategy in different market environments, and the minimal losses that are attained under optimal strategies with varying cost (see also Section \ref{sec:3a}).
\end{itemize}


\section{Convergence to a mean-field limit}
\label{sec:convergence}

In this section, we show the existence of a minimising strategy for the central agent's objective function in the mean-field limit,
as well as convergence of the $N$-agent control problem.

\subsection{The model setup}

We fix a measure $\nu \in \mathcal{P}([0,\infty))$ and define a reference probability space to be a tuple $\mathscr{S} = (\Omega, \mathcal{F}, (\mathcal{F}_t)_{t\geq 0},\mathbb{P})$ such that $\mathscr{S}$ supports a Brownian motion that is adapted to $(\mathcal{F}_t)_{t\geq0}$ and there is a $\mathcal{F}_0$-measurable random variable $X_{0-}$ with $\law(X_{0-}) = \nu$. Note that with this definition, $X_{0-}$ is independent of $B$ by construction. We endow the space

\begin{equation}
S_T:=\{f \in L^2([0,\infty))~|~ 0 \leq f \leq b_{\text{max}} ~\text{a.e.},~ f_{\vert_{(T,\infty)}} = 0\}
\end{equation}
with the topology of weak convergence 
in $L^2([0,\infty))$. Since $S_T$ is bounded in the $L^2([0,\infty))$-norm and weakly closed, $S_T$ is a compact Polish space. We then define the space of admissible controls
\begin{equation}\label{eq:defadmissiblecontrols}
\mathcal{B}_T := \{\beta \text{ is } (\mathcal{F}_{t})_{t \geq 0}-\text{progressively measurable}~|~ \mathbb{P}(\beta \in S_T) = 1 \}.
\end{equation}
Note that the space of admissible controls $\mathcal{B}_T$ as well as the objective functional $J$ as defined in \eqref{eq::objective} depend implicitly on the choice of stochastic basis $\mathscr{S}$. We will sometimes write $\mathcal{B}_T{(\mathscr{S})}$ or $J^{(\mathscr{S})}$ when we wish to emphasize this dependence. To be able to guarantee existence of optimizers and to make the optimization problem independent of the choice of stochastic basis $\mathscr{S}$, we will consider the relaxed optimization problem 

\begin{equation}\label{eq:objectiverelaxed}
V_\infty := \inf_{\mathscr{S}} \inf_{\beta \in \mathcal{B}_T (\mathscr{S})} J^{(\mathscr{S})}(\beta),
\end{equation}
as is standard in the stochastic optimal control literature (see e.g.\ \cite{fleming2006controlled}). 
We say that $(X_{0-},B,\beta,\Lmin)$ solve problem \eqref{eq::controlled}-\eqref{eq::controlled2} on $\mathscr{S}$ if $\mathscr{S} = (\Omega, \mathcal{F}, (\mathcal{F}_t)_{t\geq 0},\mathbb{P})$ is a reference probability space  with Brownian motion $B$ and initial
condition $X_{0-}$ such that $\beta \in \mathcal{B}_{T}^{(\mathscr{S})}$ and \eqref{eq::controlled}-\eqref{eq::controlled2}
holds $\mathbb{P}$-almost surely.

Note that it is not clear a priori that the process $X$ given in \eqref{eq::controlled} is well-defined. Indeed, it is known that the McKean--Vlasov problem \eqref{eq::init1} and \eqref{eq::init2} may admit more than one solution, and it is not known that physical solutions
exist for general $\beta \in \mathcal{B}_T$ , although it is known for $\beta$ of the special form $b(t,X_t)$, where $b$ is Lipschitz (see e.g.\ \cite{DIRT,LS}). To  pin down a meaningful solution concept, we therefore rely on the notion of \emph{minimal solutions} as defined in \eqref{eq:defminimal}.  By the results of \cite{CRS}, we know that minimal solutions of the uncontrolled system are physical whenever the initial condition $X_{0-}$ is integrable.

Throughout the following sections $\mathcal{P}(E)$ always denotes the set of probability measures on a Polish space $E$ which we endow with the Lévy-Prokhorov metric, i.e., convergence of probability measures is to be understood in the (probabilistic) weak sense. For function spaces etc.~we apply rather standard notation and refer to Section \ref{sec:app_notation} for more details.

\subsection{Well-posedness of minimal solutions for general drift}

We fix the reference probability space $\mathscr{S}$ and show that minimal solutions exist for any $\beta \in \mathcal{B}_T$. Define the operator $\Gamma$ for a \cadlag function $\ell$ and $\beta \in \mathcal{B}_T$ as
\begin{equation}\label{eqdef:Gammabeta}
\left\{
\begin{aligned}
X_t^{\ell}(\beta) &= X_{0-} + \int_{0}^{t}\beta_s \, ds + B_t - \alpha\ell_t, \\ 
\tau_{\beta}^{\ell} &= \inf\{t \geq 0: X_t^{\ell} (\beta)\leq 0 \}, \\
\Gamma[\ell,\beta]_t &= \P{\tau_{\beta}^{\ell} \leq t}.
\end{aligned}\right.
\end{equation}
Note here that $(X^\ell(\beta),\tau_{\beta}^\ell,\ell)$ solves \eqref{eq::controlled} if and only if $\ell$ is a fixed-point of $\Gamma[\cdot,\beta]$. We next introduce a function space that is mapped to itself by $\Gamma[\cdot,\beta]$: Set
\begin{equation}\label{eq:DistFunctiondef}
M := \{ \ell \colon \overline{\R} \rightarrow [0,1]~|~  \ell \text{ \cadlag and increasing, }~\ell_{0-} = 0,~\ell_{\infty} = 1\},
\end{equation}
where $\overline{\R}$ is the extended real line. Note that for $\ell \in M$, $\ell$  defines a cumulative distribution function of a probability measure on $[0,\infty]$.
Therefore, equipping $M$ with the topology of weak convergence, i.e., we have that $\ell^n \to \ell$ in $M$ if and only if $\ell_{t}^n \to \ell_t$ for all $t \in [0,\infty]$ that are continuity points of $\ell$, we obtain that $M$ is a compact Polish space. As in the uncontrolled case, $\Gamma[\cdot,\beta]$ is continuous on $M$.

\begin{theorem}
For any $\beta \in \mathcal{B}_T$, the operator $\Gamma[\cdot,\beta]\colon M \rightarrow M$ is continuous. Furthermore, there is a (unique) minimal solution to \eqref{eq::controlled}, and it is given by
\begin{equation}
\underline{\Lambda}(\beta) = \alpha\lim_{k\to\infty} \Gamma^{(k)}[0,\beta].
\end{equation}
\begin{proof}
Using Lemma \ref{thm:crossingproperty}, this follows from Theorem 2.3 in \cite{CRR}.
\end{proof}
\end{theorem}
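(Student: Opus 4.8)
The plan is to verify the three assertions (continuity of $\Gamma[\cdot,\beta]$ on $M$, existence of a minimal solution, and the approximation formula via Picard-type iteration from the zero function) by reducing everything to an abstract fixed-point result for monotone, continuous operators on the compact poset $M$ — presumably Theorem 2.3 in \cite{CRR} — after checking the structural hypothesis encapsulated in Lemma \ref{thm:crossingproperty}. So the first task is bookkeeping: spell out that $M$, ordered pointwise, is a compact Polish space in the topology of weak convergence, and that $\ell \mapsto \ell'$ with $\ell \le \ell'$ pointwise implies a monotonicity of the crossing probabilities, i.e. $\Gamma[\ell,\beta] \ge \Gamma[\ell',\beta]$ (a larger loss process pushes $X^\ell(\beta)$ down, hence makes $\tau^\ell_\beta$ smaller, hence increases the distribution function of $\tau^\ell_\beta$ at every $t$). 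This antitonicity, combined with $\Gamma[0,\beta] \ge 0$, is exactly what makes the iterates $\Gamma^{(k)}[0,\beta]$ behave well in pairs (even/odd subsequences) and converge to the extremal fixed point.

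Second, I would establish continuity of $\Gamma[\cdot,\beta]\colon M \to M$. Fix $\beta \in \mathcal{B}_T$. If $\ell^n \to \ell$ in $M$, then (outside the at most countably many discontinuity points of $\ell$, which form a null set) $\alpha\ell^n_t \to \alpha\ell_t$, so $X^{\ell^n}_t(\beta) \to X^{\ell}_t(\beta)$ pathwise, uniformly on compacts by monotonicity of $\ell$ and continuity of $t \mapsto \int_0^t \beta_s\,ds + B_t$; the drift term $\int_0^t \beta_s\,ds$ is Lipschitz in $t$ with constant $b_{\text{max}}$ uniformly in $\beta \in \mathcal{B}_T$, which keeps the modulus of continuity controlled. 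Then one argues, as in the uncontrolled case (the excerpt explicitly says "As in the uncontrolled case, $\Gamma[\cdot,\beta]$ is continuous on $M$"), that the hitting-time functional is continuous in probability at Brownian paths — the key point being that $\mathbb{P}$-a.s.\ the path $s \mapsto X^\ell_s(\beta)$ crosses level $0$ transversally in the sense that $\inf_{s\le t}X^\ell_s(\beta)$ is a.s.\ a continuity point of the relevant distributions, so $\mathbb{P}(\tau^{\ell^n}_\beta \le t) \to \mathbb{P}(\tau^\ell_\beta \le t)$ at continuity points $t$ of the limit. This is where Lemma \ref{thm:crossingproperty} does the real work: it must be the "crossing property" guaranteeing that the limit law of $X_{t-}$ puts no mass at $0$ in a way that would break lower/upper semicontinuity, and it is the analogue for the drifted process of the corresponding lemma in \cite{CRS,CRR}.

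Third, with continuity and antitonicity of $\Gamma[\cdot,\beta]$ on the compact Polish poset $M$ in hand, invoke Theorem 2.3 of \cite{CRR} verbatim: it yields a minimal fixed point $\ell^\star$ (minimal among all fixed points, which translates via $\underline{\Lambda}(\beta) = \alpha \ell^\star$ into the minimality property \eqref{eq:defminimal} for \eqref{eq::controlled}), together with the identification $\ell^\star = \lim_{k\to\infty}\Gamma^{(k)}[0,\beta]$ — the iteration started at the bottom element $0$ of $M$. Uniqueness of the \emph{minimal} solution is immediate from the defining order property. Finally, translate back: since $\ell$ is a fixed point of $\Gamma[\cdot,\beta]$ iff $(X^\ell(\beta),\tau^\ell_\beta,\alpha\ell)$ solves \eqref{eq::controlled}, the minimal fixed point gives the minimal solution, completing the proof.

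I expect the main obstacle to be the continuity step, specifically verifying that Lemma \ref{thm:crossingproperty} applies with the added drift $\int_0^t \beta_s\,ds$ for an arbitrary progressively measurable $\beta \in \mathcal{B}_T$ (not of feedback form). One must be sure that the bounded, absolutely continuous perturbation of the Brownian path does not create a positive probability of the limiting process being "tangent" to $0$ at some time in a way that destroys convergence of the hitting probabilities; Girsanov's theorem reduces the law of $X^\ell(\beta)$ to an equivalent change of measure from the driftless case, so the crossing property transfers, but making this rigorous uniformly enough to pass to the limit in $\ell^n$ is the delicate point. Everything else — compactness of $M$, antitonicity, the bottom-up iteration — is routine once the abstract theorem from \cite{CRR} is available.
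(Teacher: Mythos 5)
Your overall route is exactly the paper's: verify that the driving process $Z_t=X_{0-}+\int_0^t\beta_s\,ds+B_t$ satisfies the (extended) crossing property by a Girsanov change of measure reducing to Brownian motion --- this is precisely Lemma \ref{thm:crossingproperty} --- and then invoke Theorem 2.3 of \cite{CRR} on the compact space $M$ to get continuity of $\Gamma[\cdot,\beta]$, the minimal fixed point, and the iteration formula. So on the level of strategy there is nothing to add.

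However, there is a genuine error in your description of the fixed-point mechanics: you assert that $\ell\le\ell'$ implies $\Gamma[\ell,\beta]\ge\Gamma[\ell',\beta]$ and call the operator \emph{antitone}, and you then describe the iterates $\Gamma^{(k)}[0,\beta]$ as converging through paired even/odd subsequences. This contradicts your own parenthetical reasoning, which is correct: a larger loss function $\ell'$ pushes $X^{\ell'}(\beta)$ down, makes $\tau^{\ell'}_\beta$ stochastically smaller, and hence \emph{increases} the distribution function, i.e.\ $\ell\le\ell'$ gives $\Gamma[\ell,\beta]\le\Gamma[\ell',\beta]$ --- the operator is \emph{isotone} (order-preserving), as is made explicit for $\Gamma_N$ in the paper. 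This is not a cosmetic point: the whole argument for existence and minimality rests on isotonicity, which yields the monotone increasing chain $0\le\Gamma[0,\beta]\le\Gamma^{(2)}[0,\beta]\le\cdots$, whose pointwise limit is a fixed point by continuity and lies below every solution by a one-line induction ($\ell$ solution and $\Gamma^{(k)}[0,\beta]\le\ell$ imply $\Gamma^{(k+1)}[0,\beta]=\Gamma[\Gamma^{(k)}[0,\beta],\beta]\le\Gamma[\ell,\beta]=\ell$). For a genuinely antitone map, the iteration from the bottom element would in general oscillate between two subsequences converging to a $2$-cycle rather than to a fixed point, and there would be no reason for the limit to be the minimal solution, so the step as you describe it would fail. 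A secondary inaccuracy: weak convergence $\ell^n\to\ell$ in $M$ gives $\alpha\ell^n_t\to\alpha\ell_t$ only at continuity points of $\ell$, so $X^{\ell^n}(\beta)\to X^{\ell}(\beta)$ cannot be claimed uniformly on compacts when $\ell$ has jumps; the continuity of $\Gamma[\cdot,\beta]$ must be (and in \cite{CRR} is) formulated through convergence at continuity points together with the crossing property, which is exactly why Lemma \ref{thm:crossingproperty} is the only input the paper needs to check for the controlled drift.
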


\subsection{Existence of an optimal control}

A key step in proving existence of an optimizer is to show that sequences of
solutions to \eqref{eq::controlled}-\eqref{eq::controlled2} are compact in a certain sense and that their 
cluster points are solutions of \eqref{eq::controlled}-\eqref{eq::controlled2}.
This is the content of the next theorem. 

\begin{theorem}\label{thm:lambdaconvergence}
Let $(X_{0-}^n,B^n,\beta^n,\Lambda^n)$ solve \eqref{eq::controlled}-\eqref{eq::controlled2} on 
$\mathscr{S}^n$. Then, after passing to subsequences if
necessary, there is a reference probability space $\mathscr{S}$ such that $(X_{0-},B,\beta,\Lambda)$ solve \eqref{eq::controlled}-\eqref{eq::controlled2}
on $\mathscr{S}$ and it holds that $\law(\beta^n) \to \law(\beta)$ in $\mathcal{P}(S_{T})$ and $\frac{1}{\alpha} \Lambda^n \to \frac{1}{\alpha} \Lambda$ in $M$.
\end{theorem}
\begin{proof}
See Section \ref{app:existence} in the Appendix.
\end{proof}

\begin{remark}
Note that in Theorem \ref{thm:lambdaconvergence}, we do not assume that either the $\Lambda^n$ or $\Lambda$ are minimal solutions. At this point, we do not know how to prove that $\Lambda$ is minimal if all $\Lambda^n$ are minimal. Stability of the minimal solution is an open question (cf. \cite[Conjecture 6.10]{CRS}). For the proof of the subsequent theorem where we prove existence of an optimizer to \eqref{eq:objectiverelaxed}, formulated with the minimal solution, this however does not matter.
\end{remark}

Next, we prove that the infinite-dimensional problem \eqref{eq:objectiverelaxed} admits an optimizer.

\begin{theorem}\label{thm::optimizerexists}
There is an optimizer of \eqref{eq:objectiverelaxed}, i.e., there is a stochastic basis $\mathscr{S}^\star$ and $\beta^\star \in \mathcal{B}_T (\mathscr{S}^{\star})$ such that
\begin{align*}
V_{\infty} = \inf_{\mathscr{S}} \inf_{\beta \in \mathcal{B}_T(\mathscr{S})} J^{(\mathscr{S})}(\beta) = J^{(\mathscr{S}^\star)}(\beta^\star).
\end{align*} 
\end{theorem}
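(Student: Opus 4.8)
The plan is to establish existence of an optimizer via the direct method of the calculus of variations, exploiting compactness of the relaxed formulation. First I would fix a minimizing sequence: by definition of the infimum $V_\infty$ in \eqref{eq:objectiverelaxed}, there are stochastic bases $\mathscr{S}^n$ and controls $\beta^n \in \mathcal{B}_T(\mathscr{S}^n)$ with $J^{(\mathscr{S}^n)}(\beta^n) \to V_\infty$. Since every $\beta^n$ takes values in the fixed compact Polish space $S_T$, the laws $\law(\beta^n) \in \mathcal{P}(S_T)$ form a tight sequence; by Prokhorov's theorem, after passing to a subsequence, $\law(\beta^n) \to \mu$ weakly in $\mathcal{P}(S_T)$ for some $\mu$. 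The crucial point is that the relaxed problem only depends on the control through its law: by a Skorokhod-type construction I would realise $\mu$ on a single canonical stochastic basis $\mathscr{S}^\star$, namely take $\Omega^\star$ carrying a random element $\beta^\star$ with $\law(\beta^\star)=\mu$, an independent Brownian motion $B$, and an independent $X_{0-}$ of the prescribed law, with $(\mathcal{F}_t)$ the (augmented) filtration they generate; then $\beta^\star$ is progressively measurable and lies in $\mathcal{B}_T(\mathscr{S}^\star)$, and $J^{(\mathscr{S}^\star)}(\beta^\star) \geq V_\infty$ automatically.

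Next I would pass to the limit in the objective. Applying Theorem \ref{thm:lambdaconvergence} to the sequence $\law(\beta^n) \to \law(\beta^\star)$, after a further subsequence we get $\frac1\alpha \underline{\Lambda}(\beta^n) \to \frac1\alpha \Lambda$ in $M$, where $\Lambda$ solves the McKean--Vlasov problem \eqref{eq::controlled} with drift $\beta^\star$. Since $\underline{\Lambda}(\beta^\star)$ is the \emph{minimal} solution, $\underline{\Lambda}(\beta^\star)_t \le \Lambda_t$ for all $t$, hence $L_{T-}(\beta^\star) = \underline{\Lambda}(\beta^\star)_{T-}/\alpha \le \Lambda_{T-}/\alpha = \lim_n \underline{\Lambda}(\beta^n)_{T-}/\alpha$ (using that $T$ may be taken a continuity point, or by a one-sided semicontinuity argument at $T$). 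For the cost term, I would argue lower semicontinuity: $\beta \mapsto \E[\int_0^T \beta_t\,dt]$ is the expectation of a continuous linear (hence weakly continuous on the bounded set $S_T$, and in particular weakly lower semicontinuous) functional of $\beta$, so $\liminf_n \E[\int_0^T \beta^n_t\,dt] \ge \E[\int_0^T \beta^\star_t\,dt]$. Combining, $J^{(\mathscr{S}^\star)}(\beta^\star) \le \liminf_n J^{(\mathscr{S}^n)}(\beta^n) = V_\infty$, which together with the reverse inequality gives $J^{(\mathscr{S}^\star)}(\beta^\star) = V_\infty$, as desired.

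I expect the main obstacle to be handling the terminal indicator $\gamma\,\mathds{1}_{\{\widehat{\underline{X}}_{T-}=0\}}$, equivalently the loss term $\gamma\,\underline{\Lambda}(\beta)_{T-}/\alpha$, at the single time $t=T-$. The map $\ell \mapsto \ell_{T-}$ is only lower semicontinuous on $M$ with the topology of weak convergence of distribution functions (mass can escape to the left in the limit, not appear), which is exactly the direction we need since we are minimising and the loss enters with a positive sign — so the semicontinuity is favourable, but it must be stated carefully, and one has to make sure the interplay with passing from $\Lambda$ (an arbitrary solution) to $\underline{\Lambda}(\beta^\star)$ (the minimal one) does not lose anything. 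A secondary technical point is the Skorokhod realisation: one must check that on the constructed basis $\mathscr{S}^\star$ the Brownian motion is genuinely adapted and independent of $X_{0-}$, and that $\beta^\star$ is progressively measurable for the chosen filtration, so that $\beta^\star$ is a legitimate element of $\mathcal{B}_T(\mathscr{S}^\star)$ and the value $J^{(\mathscr{S}^\star)}(\beta^\star)$ is the one computed via Theorem \ref{thm:lambdaconvergence}.
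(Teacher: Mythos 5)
There is a genuine gap, and it is exactly at the step you flag as "the crucial point": the claim that the relaxed problem depends on the control only through its law $\law(\beta)$ is false. The loss functional $\beta \mapsto \underline{\Lambda}(\beta)$, and hence $J^{(\mathscr{S})}(\beta)$, depends on the \emph{joint} law of $\big(X_{0-}+B,\beta\big)$: admissible controls are progressively measurable for a filtration to which $B$ is adapted, so they may be (and in this problem typically are, cf.\ the feedback optimizers $\beta^\star(t,X_t)$ of Section 3) functionals of the Brownian path, and two controls with the same marginal law on $S_T$ but different couplings with $B$ produce different hitting probabilities $\Gamma[\ell,\beta]$ and different $\underline{\Lambda}$. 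Consequently, if you realise the limit law $\mu$ through a $\beta^\star$ \emph{independent} of $(X_{0-},B)$, the comparison with the minimizing sequence collapses: the $\Lambda$ you extract in the limit is a solution of \eqref{eq::controlled} for the limiting joint law of $(W^n,\beta^n)=(X^n_{0-}+B^n,\beta^n)$, not for your product coupling, so the minimality inequality $\underline{\Lambda}_{T-}(\beta^\star)\le \Lambda_{T-}$ on your basis $\mathscr{S}^\star$ does not follow, and the chain $J^{(\mathscr{S}^\star)}(\beta^\star)\le\liminf_n J^{(\mathscr{S}^n)}(\beta^n)$ breaks. (The literal wording of Theorem \ref{thm:lambdaconvergence}, which only mentions $\law(\beta^n)\to\law(\beta)$, may have suggested otherwise, but its proof and its use both rely on the drifts being coupled with the same $W=X_{0-}+B$, i.e.\ on joint convergence of $(W^n,\beta^n)$.) Your cost term and the lower semicontinuity of $\ell\mapsto\ell_{T-}$ are handled correctly, but they only see the marginal of $\beta$ and the law of $\Lambda$, so they do not rescue the construction.

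The repair is precisely what the paper's proof does and what your construction skips: prove tightness of the \emph{pairs} $(W^n,\beta^n)$ on $C([0,\infty))\times S_T$ (the $W^n$ all have the same law, $S_T$ is compact), extract a jointly convergent subsequence, apply Skorokhod to the pair to get almost sure convergence $(W^n,\beta^n)\to(W,\beta^\star)$ on one space, define $\mathcal{F}^\star_t=\sigma\big(W_s,\int_0^s\beta^\star_u\,du,\ s\le t\big)$, and then verify — via the product/limit argument with bounded continuous test functions — that $B=W-W_0$ is an $(\mathcal{F}^\star_t)$-Brownian motion, so that $\beta^\star\in\mathcal{B}_T(\mathscr{S}^\star)$ on an admissible basis carrying the \emph{correct} joint law. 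Only then can Theorem \ref{thm:lambdaconvergence}, minimality of $\underline{\Lambda}(\beta^\star)$, and the Portmanteau bound at $T-$ be combined as you intend to conclude $J^{(\mathscr{S}^\star)}(\beta^\star)\le V_\infty$. In short: keep the noise and the control together through the limit; the non-anticipativity check for the limit control is the real work of the proof, not a secondary technicality.
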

\begin{proof}
Let $(X_{0-}^n,B^n,\beta^n,\Lambda^n)$ be solutions 
to \eqref{eq::controlled}-\eqref{eq::controlled2}
on $\mathscr{S}^n$ such that  ${J^{(\mathscr{S}^n)}(\beta^n) \leq V_\infty + \frac{1}{n}}$. By Theorem \ref{thm:lambdaconvergence},
after passing to subsequences if necessary, there is 
a reference probability space $\mathscr{S}^\star$ such 
that $(X_{0-}^\star,B^\star,\beta^\star,\Lambda^\star)$
solves \eqref{eq::controlled}-\eqref{eq::controlled2} on
$\mathscr{S}^\star$ and $\law(\beta^n) \to \law(\beta^\star)$ holds in $\mathcal{P}(S_T)$ as well as
$\frac{1}{\alpha} \Lambda^n \to \frac{1}{\alpha} \Lambda^\star$ in $M$.

In the following, we simply write $J$ instead of $J^{(\mathscr{S}^\star)}$ etc. By construction, we have $J(\beta^n) \leq V_\infty + \frac{1}{n}$ and hence $\liminf_{n\to\infty} J(\beta^n) \leq V_\infty$.
It is clear that $V_\infty \le \gamma < \infty$ by \eqref{eq:objectiverelaxed} and \eqref{eq::objective},
as $J$ attains a value less than or equal to $\gamma$ for $\beta=0$. 
We proceed to show that $J(\beta^\star) \leq \liminf_{n\to\infty} J(\beta^n)$.\\

Since the functional $b \mapsto \int_{0}^{T} b_s \;ds$ is
continuous and bounded on $S_T$, it follows that
\begin{equation}
\lim_{n\to\infty} \mathbb{E}^n\left[\int_{0}^{T} \beta_s^n\, ds \right] = \mathbb{E}^\star\left[\int_{0}^{T}\beta_s^\star\,ds\right].
\end{equation}
The Portmanteau theorem implies that
\begin{equation}
\liminf_{n\to\infty}\Lambda_{T-}^{n} \geq \Lambda_{T-}^{\star},
\end{equation}
and since $\Lambda^\star$ solves \eqref{eq::controlled}-\eqref{eq::controlled2} with drift $\beta^\star$ and $\underline{\Lambda}(\beta^\star)$ is the minimal solution on $\mathscr{S}^\star$ with drift $\beta^\star$, it follows that $\underline{\Lambda}_{T-}(\beta^\star) \leq \Lambda_{T-}^\star$ which is equivalent to $\underline{L}_{T-}(\beta^\star) \leq L_{T-}$, which concludes the proof.
\end{proof}

\subsection{Properties of the \NplayerGame}

We describe the \NplayerGame\ mentioned in the introduction in more detail. We consider a stochastic basis $\mathscr{S}_N = (\Omega_N, \mathcal{F}^N, (\mathcal{F}_t^N)_{t \geq 0}, \mathbb{P}_N)$ supporting certain exchangeable random variables, defined as follows.

\begin{definition}
Set $X^N:=(X^{1,N},\dots,X^{N,N})$, where the $X^{i,N}$ are random variables taking values in some space $E$. We say that $X^N$ is $N$-exchangeable, if 
\begin{align*}
\law(X^N)=\law ((X^{\sigma(1),N},X^{\sigma(2),N},\dots,X^{\sigma(N),N})),
\end{align*}
for any permutation $\sigma$ of $\{1,\dots,N\}$. We say that $\beta^N$ is $\mathscr{S}_N$-exchangeable if the vector $(X_{0-}^{i,N},B^{i,N},\beta^{i,N})_{i=1}^{N}$ is $N$-exchangeable under $\mathbb{P}_N$.
\end{definition}

The stochastic basis $\mathscr{S}_N$ is supposed to support an $N$-dimensional Brownian motion $B^N$ and an $N$-exchangeable, $\mathcal{F}_0^N$-measurable random vector $X_{0-}^N$. The particles in the system then satisfy the dynamics
\begin{align}\label{eq::controlledNplayer}
X_{t}^{i,N} := X_{0-}^{i,N} + \int_{0}^{t} \beta_s^{i,N}\,ds +  B_t^{i,N} - \L_t^N,
\end{align}
where $\beta^N$ is $\mathscr{S}_N$-exchangeable, and $\L_t^N = \frac{\alpha}{N} \sum_{i=1}^{N} \mathds{1}_{\{\tau^{i,N} \leq t\}}$, where $\tau_{i,N} := \inf\{t\geq 0: X_{t}^{i,N} \leq 0\}$. In analogy to the infinite-dimensional case, we denote $L^N := \frac{1}{\alpha} \L^N$. We then consider the set of admissible controls 
\begin{equation}\label{eq:defadmissiblecontrolsN}
\mathcal{B}_T^N := \{\beta^N \text{ is $\mathscr{S}_N$-exchangeable, } (\mathcal{F}_{t}^N)_{t \geq 0}-\text{progressively measurable}~|~ \mathbb{P}(\beta^{1,N} \in S_T) = 1 \}.
\end{equation}
The same examples as in the uncontrolled case show that solutions to \eqref{eq::controlledNplayer} are not unique in general (cf. Section 3.1.1 in \cite{DIRT}). 
Therefore, in \cite{DIRT2},  physical solutions are introduced. Similarly to the infinite dimensional case we can also consider minimal solutions.  We call a solution $\underline{\L}^N$ to \eqref{eq::controlledNplayer} minimal, 
if for every solution $\L^N$ to \eqref{eq::controlledNplayer}
\begin{align*}
\underline{\L}_t^N \leq \L_t^N, \quad t \geq 0,
\end{align*}
holds almost surely.
The same argument as in \cite[Lemma 3.3]{CRS} shows that the notions of physical and minimal solution are equivalent in the \NplayerGame.
In analogy to the infinite-dimensional case, we introduce the operator 

\begin{equation}\label{eqdef:GammaNbeta}
\left\{\begin{aligned}
X_{t}^{i,N}[\mathsf{L},\beta^N] &= X_{0-}^{i,N} + \int_{0}^{t} \beta_s^{i,N}\,ds + B_{t}^{i,N} -\alpha\mathsf{L}_t \\
\tau_{i,N}[\mathsf{L},\beta^N] &= \inf\{t \geq 0: X_{t}^{i,N}[\mathsf{L},\beta^N] \leq 0\} \\
\Gamma_N[\mathsf{L},\beta^N]_t &= \frac{1}{N}\sum_{i=1}^{N} \ind{\left\{\tau_{i,N}[\mathsf{L},\beta^N]\leq t\right\}}, 
\end{aligned}\right.
\end{equation}
where $\mathsf{L}$ is some \cadlag process adapted to the filtration generated by $B^N$. 
We will often simply write $\Gamma_N[\mathsf{L}]$ instead of $\Gamma_N[\mathsf{L},\beta^N]$. The statements are then meant to hold for arbitrary, fixed $\beta^N$.
An important property is that $\Gamma_N[\cdot,\beta^N]$ is monotone in the sense that 
$$
\mathsf{L}^1_t \leq \mathsf{L}^2_t, \quad t \geq 0 \quad \implies \quad \Gamma_N[\mathsf{L}^1,\beta^N]_t \leq \Gamma_N[\mathsf{L}^2,\beta^N]_t, \quad t \geq 0.
$$
We then readily see by straightforward induction arguments that 
\begin{equation}\label{eq:GammaNiterationprop}
\alpha \Gamma_N^{(k)}[0] \leq \L^N, \quad \Gamma_N^{(k)}[0] \leq \Gamma_N^{(k+1)}[0], \quad k \in \N,
\end{equation}
holds almost surely, where $\L^N$ is any solution to the particle system
and $\Gamma_N^{(k)}$ denotes the $k$-th iterate of $\Gamma_N$. A similar argument as for the system without drift in \cite{CRS} shows that the iteration $(\Gamma_N^{(k)}[0])_{k\in\N}$ converges to the minimal solution after at most $N$ iterations.
\begin{lemma}\label{lem:miniteration}
For $N \in \N$, let $\Gamma_N$ be defined as in \eqref{eqdef:GammaNbeta}.
Then  $\underline{\L}^N:=\alpha\Gamma^{(N)}_N [0,\beta^N]$ is the minimal solution to the particle system with drift $\beta^N$ and the error bound
\begin{align}\label{eq:nerrorbound}
\|\alpha\Gamma^{(k)}_N[0,\beta^N] - \underline{\L}^N\|_{\infty} \leq\alpha \frac{(N-k)^+}{N}
\end{align}
holds almost surely. 
\end{lemma}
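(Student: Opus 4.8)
\emph{Proof plan.}
The plan is to first recall, exactly as in the drift-free case of \cite{CRS}, that the iterates $g_k:=\Gamma_N^{(k)}[0,\beta^N]$ form a pathwise non-decreasing sequence in $M$ --- by the monotonicity of $\Gamma_N[\cdot,\beta^N]$ and \eqref{eq:GammaNiterationprop} --- which is bounded above by every solution of \eqref{eq::controlledNplayer}; hence it converges pointwise to a limit $\underline g$, which (being increasing and $\tfrac1N\{0,\dots,N\}$-valued) is a finite step function in $M$, and $\alpha\underline g$ is a solution of \eqref{eq::controlledNplayer} lying below all others, so the minimal solution exists and equals $\alpha\underline g$. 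Granting this, the whole lemma reduces to the pathwise bound \eqref{eq:nerrorbound}, since putting $k=N$ there forces $g_N=\underline g$, which is the asserted identification $\underline{\L}^N=\alpha\Gamma_N^{(N)}[0,\beta^N]$.

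To prove \eqref{eq:nerrorbound} I would fix a path. Since $N\underline g$ is a non-decreasing \cadlag step function, list its increases with multiplicity as default events $e_1,e_2,\dots$ ordered chronologically, ties within a simultaneous jump (a \emph{cascade}) being broken by increasing left-limit value of the defaulting particle, so that the diffusively triggered member of the cascade (left-limit $0$) comes first. The crux is the claim, proved by induction on $k$, that $g_k$ \emph{reproduces the first $k$ default events}, i.e.\ for every $m\le k$ the particle $i(e_m)$ defaults at time $t(e_m)$ also in the $k$-th iterate. Granting this, at an arbitrary time $t$ one has $Ng_k(t)\ge\#\{m\le k:\,t(e_m)\le t\}=\min\big(k,\,N\underline g(t)\big)$ (the times $t(e_m)$ being sorted), and since $g_k\le\underline g$ this gives $0\le\underline g(t)-g_k(t)\le\tfrac1N\big(N\underline g(t)-k\big)^+\le\tfrac{(N-k)^+}{N}$ uniformly in $t$, which is \eqref{eq:nerrorbound}.

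The base case $k=0$ is vacuous. For the inductive step, assume $g_k$ reproduces $e_1,\dots,e_k$; the count inequality above then forces $g_k=\underline g$ on $[0,t(e_{k+1}))$, so the perturbation $\alpha g_k$ coincides with $\alpha\underline g$ there and the trajectories $X^{i,N}[g_k,\beta^N]$ driving $g_{k+1}=\Gamma_N[g_k,\beta^N]$ agree with the minimal-solution trajectories on $[0,t(e_{k+1}))$; hence $g_{k+1}$ again reproduces $e_1,\dots,e_k$, and it remains only to see that it resolves $e_{k+1}$. If $e_{k+1}$ is a new default triggered diffusively at a time strictly after $t(e_k)$, the perturbation is frozen at the correct level in between, the relevant trajectory agrees with the minimal one up to $t(e_{k+1})$, and its running infimum reaches $0$ there. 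If instead $e_{k+1}$ is the $(s+1)$-st member in cascade order of a cascade at a time $\theta$ of which $g_k$ has reproduced exactly the first $s$ (so that $g_k$ has an upward jump of height $s/N$ at $\theta$), then in the $(k+1)$-st iterate the trajectory of $i(e_{k+1})$ drops by $\alpha s/N$ at $\theta$; as $e_{k+1}$ genuinely defaults at $\theta$ in the minimal solution, the characterisation of the minimal (equivalently, physical) jump of the $N$-particle system forces the left-limit value of $i(e_{k+1})$ to be at most $s\alpha/N$, so subtracting $\alpha s/N$ drives it to $\le 0$ and the default is resolved. The main obstacle is precisely this cascade bookkeeping --- reconciling the ``one further cascade member resolved per iteration'' behaviour of $\Gamma_N$ with the minimal-jump characterisation --- together with the care needed for hitting-time attainment (left limits versus values, and whether a running infimum reaches $0$ exactly at a jump time) when a cascade and a diffusive hit occur at the same instant. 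With the step established, after at most $N$ iterations every default event is reproduced, so $g_N=\underline g$ is the minimal solution and \eqref{eq:nerrorbound} holds with the stated rate.
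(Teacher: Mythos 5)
Your proposal is correct and follows essentially the route the paper itself takes, since the paper's proof is simply a reference to the analogous Lemma 3.1 of \cite{CRS}: a monotone fixed-point iteration bounded by every solution, combined with the bookkeeping that each application of $\Gamma_N[\cdot,\beta^N]$ locks in at least the next default in chronological order, where the cascade case is handled exactly as you do via the physical (equivalently minimal) jump characterisation bounding the $(s+1)$-st cascade member's pre-jump value by $\alpha s/N$. The only ingredients you take for granted (existence/identification of the minimal solution as the limit, and the minimal--physical equivalence for the $N$-particle system with drift) are precisely the ones the paper also imports from \cite{CRS} and \cite{DIRT2}, so your sketch is at least as detailed as the paper's own argument.
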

\begin{proof}
Analogous to the proof of Lemma 3.1 in \cite{CRS}.
\end{proof}

Roughly speaking, the next result says that limit points (in distribution) of solutions to the \NplayerGame\
converge (along subsequences) to solutions of the controlled McKean--Vlasov equation \eqref{eq::controlled}-\eqref{eq::controlled2}. By $D[-1, \infty)$ we denote here the space of càdlàg paths on $[-1, \infty)$ equipped with the $M_1$-topology.

\begin{theorem}\label{thm:finitedimconvergence} For $N \in \N$, let $(X^N,\beta^N,\L^N)$ be a solution to the particle system
\eqref{eq::controlledNplayer} on the stochastic basis $\mathscr{S}_N$ and define $\mu_N := \frac{1}{N} \sum_{i=1}^{N} \delta_{X^{i,N}}$.
Suppose that for some measure $\lawXnull \in \mathcal{P}(\R)$ we have
$$
\lim_{N\to\infty} \frac{1}{N}\sum_{i=1}^{N}\delta_{X_{0-}^{i,N}} = \lawXnull.
$$
Then there is a subsequence (again denoted by $N$) such that  $\law(\EmpM_N) \to \law(\EmpM)$ in $\mathcal{P}(\mathcal{P}(D([-1,\infty))))$, where 
$\mu$ coincides almost surely with the law of a solution process $X$ to the
McKean--Vlasov problem \eqref{eq::controlled}-\eqref{eq::controlled2} satisfying $\law(X_{0-})=\lawXnull$.
\end{theorem}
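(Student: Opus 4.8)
The plan is to follow the compactness/propagation-of-chaos strategy already used in the uncontrolled case in \cite{CRS, DIRT}, but carrying the control $\beta^N$ along as an extra coordinate. First I would enlarge the state variable: instead of working with $\mu_N = \frac1N\sum_i \delta_{X^{i,N}}$ alone, introduce the empirical measure $\bar\mu_N = \frac1N\sum_i \delta_{(W^{i,N},\,\beta^{i,N})}$ on $C([0,\infty))\times S_T$ with $W^{i,N}=X_{0-}^{i,N}+B^{i,N}$. Since $S_T$ is compact Polish and the laws of $W^{i,N}$ are tight (the $W^{i,N}$ have the same law, being sums of an i.i.d.-type initial condition with Wiener paths, and the initial laws converge to $\lawXnull$), the family $\mathrm{law}(\bar\mu_N)$ is tight on $\mathcal{P}(\mathcal{P}(C([0,\infty))\times S_T))$ by the usual criterion (exchangeability reduces tightness of the random measures to tightness of the one-particle marginals). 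Extracting a subsequence and invoking Skorokhod, I would pass to a probability space on which $\bar\mu_N \to \bar\mu$ almost surely, and simultaneously $\mathsf{L}^N := \frac1\alpha\underline{\L}^N \to \mathsf{L}$ in $D([-1,\infty))$ with the $M_1$-topology (using the error bound \eqref{eq:nerrorbound} to replace a general solution by the minimal one, together with the monotone-convergence structure \eqref{eq:GammaNiterationprop}); the shift to $[-1,\infty)$ and the $M_1$-topology is the standard device from \cite{CRS} to accommodate the jumps of $\L$.

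Next I would identify the limit. Writing $\Lambda_t = \alpha\,\bar\mu\big(\{(w,f): \inf_{0\le s\le t}(w_s + \int_0^s f_u\,du - \Lambda_s)\le 0\}\big)$, the aim is to show that $(\bar\mu,\Lambda)$ satisfies the McKean–Vlasov system \eqref{eq::controlled} with $\Lambda = \alpha\mathsf{L}$, and that $\mu$, the pushforward of $\bar\mu$ under $(w,f)\mapsto w_\cdot+\int_0^\cdot f_u\,du - \Lambda_\cdot$, is a.s.\ the law of such a solution $X$. The two inclusions to control are: (i) $\liminf_N \Gamma_N[\mathsf{L}^N]_t \ge \Gamma[\mathsf{L},\beta]_t$ at continuity points — this is a lower-semicontinuity statement for the hitting-time functional under $M_1$-convergence of the driving loss path and weak convergence of the empirical measure of $(W^{i,N},\beta^{i,N})$, and it uses that the drift term $\int_0^\cdot \beta^{i,N}_s\,ds$ is, by the $b_{\max}$-bound, uniformly Lipschitz hence converges in $C([0,\infty))$ whenever $\beta^{i,N}\to\beta^i$ weakly in $S_T$; (ii) the reverse inequality, $\mathsf{L}_t \le \Gamma[\mathsf{L},\beta]_t$, which follows because $\underline{\L}^N = \alpha\Gamma_N^{(N)}[0]$ is obtained by monotone iteration from $0$ and each finite-$N$ fixed point relation passes to the limit as a super-solution bound — alternatively one reruns the $\Gamma^{(k)}$ iteration in the limit and uses continuity of $\Gamma[\cdot,\beta]$ on $M$ from the previous theorem. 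Combining (i) and (ii) pins $\Lambda$ as a solution with drift the limiting $\beta$; a separate argument (the minimality/crossing property, Lemma \ref{thm:crossingproperty}) is needed only if one wants $\Lambda$ to be the \emph{minimal} solution, but the statement as written only asks that $\mu$ be the law of \emph{a} solution, so it suffices to exhibit the fixed-point relation.

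Finally, I would record that $\mu$ is deterministic-in-law in the required sense: because the limiting system \eqref{eq::controlled} with a given realisation of the limiting control law has, along the constructed subsequence, its loss path $\Lambda$ determined, the conditional law $\mu$ of $X = W_\cdot + \int_0^\cdot\beta_u\,du - \Lambda_\cdot$ given the ``environment'' is exactly the one-particle law of the McKean–Vlasov solution, which is what the convergence $\law(\mu_N)\to\law(\mu)$ in $\mathcal{P}(\mathcal{P}(D([-1,\infty))))$ asserts once we push forward from the $(W,\beta)$-coordinates to the $X$-coordinate via the continuous (on the relevant set) map above. The main obstacle, as in \cite{CRS}, is step (i): controlling the hitting-time functional across the limit when the loss path $\mathsf{L}$ has jumps. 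The $M_1$-topology on $D([-1,\infty))$ is precisely tailored so that $\ell\mapsto \inf_{s\le t}(\text{continuous path} - \ell_s)$ behaves well, and one has to combine this with a ``no mass on the boundary at a continuity point'' argument — i.e.\ ruling out that a positive fraction of particles sits exactly at $0$ at a continuity point of $\Lambda$ without crossing — which is exactly the role of the crossing property in Lemma \ref{thm:crossingproperty}. I expect the rest (tightness, Skorokhod, Lipschitz continuity of the drift integral, identification of the non-jump part) to be routine adaptations of the uncontrolled proofs.
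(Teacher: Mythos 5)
Your overall strategy is the one the paper uses: enlarge the empirical data to carry the controls (and the loss path), get tightness from compactness of $S_T$ and $M$ plus tightness of the driving paths, apply Skorokhod, use continuity of $b\mapsto\int_0^\cdot b_s\,ds$ and of the embedding into $D([-1,\infty))$ with the $M_1$-topology, and identify the limit via the crossing property of Lemma \ref{thm:crossingproperty}. However, there are two genuine flaws. First, you cannot ``replace a general solution by the minimal one'' using \eqref{eq:nerrorbound}: that bound compares the iterates $\alpha\Gamma_N^{(k)}[0,\beta^N]$ with $\underline{\L}^N$, and \eqref{eq:GammaNiterationprop} gives only the one-sided inequality $\alpha\Gamma_N^{(k)}[0]\le\L^N$; a general solution $\L^N$ may exceed $\underline{\L}^N$ by a non-vanishing amount (solutions are not unique), and the particle paths entering $\mu_N$ are driven by $\L^N$, not $\underline{\L}^N$. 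The theorem is stated for an arbitrary solution, and the paper simply keeps $L^N$ as a third coordinate of the empirical measure on $C([0,\infty))\times S_T\times M$, so no such reduction is needed or available.

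Second, your identification step (ii) both depends on that invalid reduction (monotone iteration from $0$ characterises only the minimal solution) and glosses over the actual difficulty: what must be passed to the limit is the fixed-point relation for the \emph{random} operator $\Gamma_N[\cdot,\beta^N]$, built from the empirical data, and continuity of the deterministic $\Gamma[\cdot,\beta]$ in its first argument does not accomplish this. The paper instead shows directly that $\mathsf{L}_t=\langle\mu(\omega),\lambda_t\rangle$ (as in Step 1 of Proposition 5.6 of \cite{CRS}), for which one needs both that the limiting $W-W_0$ is a Brownian motion with respect to the filtration generated by $(W,\int_0^\cdot\beta_s\,ds,\mathsf{L})$ — a step you omit — and the crossing property, which makes the hitting functional almost surely continuous under the limit measure. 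In particular, your middle-paragraph claim that Lemma \ref{thm:crossingproperty} is ``needed only if one wants $\Lambda$ to be the minimal solution'' is incorrect (and is contradicted by your own closing paragraph): the crossing property is what identifies the limit as \emph{a} solution at all; minimality is neither asserted nor obtained in the theorem.
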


\begin{proof}
See Section \ref{app::finitedimconvergence}
\end{proof}

The next theorem shows that when we optimize the policy in the particle system and then take the limit of the resulting optimal values, we obtain the same value that we find by optimizing the infinite-dimensional version of the problem. 

\begin{theorem}\label{thm::optimizersconverge}
For $\kappa \in (0,1/2)$, define the value of a perturbed \NplayerGame\ as
\begin{align*}
V_N &:= \inf_{\mathscr{S}_N} \inf_{\beta^N \in \mathcal{B}(\mathscr{S}_N)} J_N(\beta^N), \quad J_N(\beta^N) := \mathbb{E}^{N}\left[\int_{0}^{T}\beta^{1,N}_s\,ds + \gamma \Lpertmin_{T-}^N(\beta^N)\right],
\end{align*}
where $\Lpertmin^N(\beta^N) := \frac{1}{\alpha} \LpertminLambda^N(\beta^N)$ and $\LpertminLambda^N(\beta^N)$ is the minimal solution of the \NplayerGame\ with drift $\beta^N$ as introduced in Lemma \ref{lem:miniteration} and perturbed initial condition
$\tilde{X}_{0-}^{i,N} = X_{0-}^{i}+N^{-\kappa}$ for $\kappa \in (0,1/2)$ and all $i=1,\ldots,N$. Then it holds that
\begin{equation}
\lim_{N\to\infty} V_N = V_{\infty}.
\end{equation}
\end{theorem}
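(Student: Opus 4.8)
The plan is to prove the two inequalities $\liminf_{N\to\infty}V_N\ge V_\infty$ and $\limsup_{N\to\infty}V_N\le V_\infty$ separately; the perturbation $\tilde X^{i,N}_{0-}=X^i_{0-}+N^{-\kappa}$, $\kappa\in(0,1/2)$, will be essential only for the second. For the lower bound I would take near-optimal controls $\beta^N\in\mathcal B_T^N(\mathscr S_N)$, $J_N(\beta^N)\le V_N+\tfrac1N$, and argue as in the proofs of Theorems~\ref{thm::optimizerexists} and~\ref{thm:finitedimconvergence}: the driving data $(X^i_{0-}+N^{-\kappa}+B^i)_i$ is tight and $S_T$ is compact, so the laws of $(X^{1,N},\beta^{1,N},\mu_N,\LpertminLambda^N(\beta^N))$, with $\mu_N:=\tfrac1N\sum_{i=1}^N\delta_{X^{i,N}}$, are tight; passing to a subsequence and using the Skorokhod representation yields almost sure convergence on a common space to $(X,\beta^\star,\mu,\Lambda)$ which, by the proof of Theorem~\ref{thm:finitedimconvergence}, provides an admissible basis $\mathscr S^\star$, a control $\beta^\star\in\mathcal B_T(\mathscr S^\star)$, $\mu=\law(X)$, and $\tfrac1\alpha\LpertminLambda^N(\beta^N)\to\tfrac1\alpha\Lambda$ in $M$, with $(X,\beta^\star,\Lambda)$ solving~\eqref{eq::controlled} and $\law(X_{0-})=\lawXnull$ (since $N^{-\kappa}\to0$ the limiting initial law is unchanged). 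As $f\mapsto\int_0^Tf_s\,ds$ is bounded and weakly continuous on $S_T$, $\E\int_0^T\beta^{1,N}_s\,ds\to\E\int_0^T\beta^\star_s\,ds$; lower semicontinuity of $\ell\mapsto\ell_{T-}$ under $M$-convergence together with Fatou's lemma gives $\liminf_N\E[\LpertminLambda^N_{T-}(\beta^N)]\ge\Lambda_{T-}$; and $\Lambda_{T-}\ge\underline\Lambda_{T-}(\beta^\star)$ because $\Lambda$ is the loss of a solution with drift $\beta^\star$ while $\underline\Lambda(\beta^\star)$ is the minimal one. Hence $\liminf_N J_N(\beta^N)\ge\E\int_0^T\beta^\star_s\,ds+\gamma\,\underline L_{T-}(\beta^\star)=J^{(\mathscr S^\star)}(\beta^\star)\ge V_\infty$. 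In this direction minimality of $\LpertminLambda^N(\beta^N)$ is used only through ``minimal $\le$ any solution'', so the perturbation plays no role.

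For the upper bound, fix $\eps>0$ and choose $\mathscr S$ and $\beta\in\mathcal B_T(\mathscr S)$ with $J^{(\mathscr S)}(\beta)\le V_\infty+\eps$; after enlarging the space with an auxiliary randomisation and approximating the relaxed optimum if necessary, I may assume $\beta=\Phi(X_{0-},B)$ for a progressively measurable $S_T$-valued functional $\Phi$. On a basis $\mathscr S_N$ carrying i.i.d.\ copies $(X^i_{0-},B^i)$ of $(X_{0-},B)$, with perturbed initial conditions $X^i_{0-}+N^{-\kappa}$, set $\beta^{i,N}:=\Phi(X^i_{0-},B^i)$; then $\beta^N\in\mathcal B_T^N(\mathscr S_N)$ is $N$-exchangeable, and by Lemma~\ref{lem:miniteration} the perturbed $N$-particle system driven by $\beta^N$ has minimal solution $\LpertminLambda^N(\beta^N)=\alpha\Gamma_N^{(N)}[0,\beta^N]$. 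The crux is the convergence $\tfrac1\alpha\LpertminLambda^N(\beta^N)\to\tfrac1\alpha\underline\Lambda(\beta)$ in probability in $M$. Granting it, each $\beta^{i,N}$ has the law of $\beta$, so $\E\int_0^T\beta^{1,N}_s\,ds=\E\int_0^T\beta_s\,ds$; and since $T$ may be taken to be a continuity point of $\underline\Lambda(\beta)$ (consistently with the paper's conventions around $L_{T-}$, e.g.\ by the Girsanov argument of the footnote after the harmless reduction to controls with bounded total cost $\int_0^\infty\beta_s\,ds$), bounded convergence gives $\E[\LpertminLambda^N_{T-}(\beta^N)]\to\underline\Lambda_{T-}(\beta)$. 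Therefore $\limsup_N V_N\le\limsup_N J_N(\beta^N)=\E\int_0^T\beta_s\,ds+\gamma\,\underline L_{T-}(\beta)=J^{(\mathscr S)}(\beta)\le V_\infty+\eps$, and letting $\eps\downarrow0$ finishes the proof.

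The one genuinely new ingredient, and the main obstacle, is the convergence $\tfrac1\alpha\LpertminLambda^N(\beta^N)\to\tfrac1\alpha\underline\Lambda(\beta)$. That every subsequential limit $\ell$ of $\tfrac1\alpha\LpertminLambda^N(\beta^N)$ satisfies $\alpha\ell\ge\underline\Lambda(\beta)$ is immediate from Theorem~\ref{thm:finitedimconvergence}, since such an $\ell$ is the loss of a McKean--Vlasov solution with drift $\beta$ and hence dominates the minimal solution. The reverse bound $\alpha\ell\le\underline\Lambda(\beta)$ is exactly where the perturbation $N^{-\kappa}$ with $\kappa<1/2$ is used, extending the method of~\cite{CRS}: one compares the perturbed particle system with the deterministic curve $\underline\Lambda(\beta)$---for instance by running the monotone iteration $\Gamma_N[\cdot,\beta^N]$ from $\underline\Lambda(\beta)$, using the bounds~\eqref{eq:GammaNiterationprop} and Lemma~\ref{lem:miniteration} and controlling the fluctuations of the empirical first-hitting distribution around its conditional mean, which are of order $N^{-1/2}$---so that the $N^{-\kappa}$ ``head start'' swamps these fluctuations precisely because $\kappa<1/2$, forcing $\LpertminLambda^N(\beta^N)$ not to exceed $\underline\Lambda(\beta)$ in the limit; the condition $\kappa>0$ ensures the perturbation vanishes, so the limiting initial law is still $\lawXnull$. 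Uniqueness of the minimal solution then upgrades subsequential convergence to convergence of the whole sequence. (If one prefers not to reduce to bounded total cost, the endpoint $T$ must instead be handled via the $L_{T-}$ convention together with a short argument that $\LpertminLambda^N_{T-}(\beta^N)$ cannot concentrate additional mass at $T$ in the limit.)
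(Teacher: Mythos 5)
Your lower bound is essentially the paper's Step 1 (tightness of the driving data plus compactness of $S_T$ and $M$, Skorokhod representation, lower semicontinuity of $\ell\mapsto\ell_{T-}$ with Fatou/Portmanteau, then minimality), up to the minor imprecision that the limit of the empirical measures is a \emph{random} measure, which the paper handles by working $\omega$-wise with the canonical setup before concluding the cost is at least $V_\infty$ almost surely. For the upper bound you deviate: the paper never proves convergence of $\Lpertmin^N(\betaNstar)$ to $\underline{L}(\beta^\star)$; it introduces the auxiliary penalized problem $\hat V_N$ over pairs $(\beta^N,\mathsf{L})$, shows $\limsup_N\hat V_N\le V_\infty$ by plugging in i.i.d.\ copies of $\beta^\star$ together with the \emph{deterministic} curve $\underline{L}(\beta^\star)$ (the penalty $N^{\kappa}\|\Gamma_N[\underline{L},\betaNstar]-\underline{L}\|_\infty$ vanishes by a DKW-type bound, since $\Gamma_N[\underline{L},\betaNstar]$ is the empirical CDF of i.i.d.\ hitting times with CDF $\underline{L}$ and $\kappa<1/2$), and then links $\hat V_N$ back to $V_N$ by a monotone induction showing that any $N^{-\kappa}$-approximate fixed point of $\Gamma_N$ dominates $\Lpertmin^N$ up to $N^{-\kappa}$ --- this is exactly where the perturbed initial condition is used. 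Your direct route (evaluate $J_N$ at i.i.d.\ copies of a near-optimal $\beta$ and bound $\limsup_N\mathbb{E}[\Lpertmin^N_{T-}(\beta^N)]$ by $\underline{L}_{T-}(\beta)$) is viable and leaner, but the ``genuinely new ingredient'' you flag is precisely the paper's induction: on the event $\{\|\Gamma_N[\underline{L}(\beta),\beta^N]-\underline{L}(\beta)\|_\infty\le N^{-\kappa}\}$, whose probability tends to one by DKW, the observation that raising every initial condition by (a multiple of) $N^{-\kappa}$ is the same as lowering the loss argument of $\Gamma_N$, combined with monotonicity and Lemma \ref{lem:miniteration}, yields the pathwise, \emph{uniform-in-time} bound $\Lpertmin^N(\beta^N)\le\underline{L}(\beta)+N^{-\kappa}$. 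Note that only this one-sided bound is needed, not the two-sided convergence in $M$ you announce as the crux.

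Two caveats. First, because that comparison is uniform in $t$, it controls $\Lpertmin^N_{T-}(\beta^N)$ directly, so your concern about $T$ being a continuity point of $\underline{\Lambda}(\beta)$ is moot; in particular the proposed reduction to controls with bounded total cost via the Girsanov footnote is both unnecessary and impermissible here, since it changes the admissible class and hence would not prove the theorem as stated. Second, the reduction to $\beta=\Phi(X_{0-},B)$ is imprecise (an admissible control is progressive for a possibly larger filtration, so a functional representation requires an extra independent randomization) and superfluous: as in the paper, take the $N$-fold product of the basis carrying the near-optimizer and let $\beta^{i,N}$ be the $i$-th coordinate copy, which is exchangeable and admissible. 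With these repairs your argument is a correct variant of the paper's Step 2 that dispenses with the auxiliary problem $\hat V_N$; what the paper's formulation buys is the extra flexibility of optimizing over the pair $(\beta^N,\mathsf{L})$, which it reuses almost verbatim in the proof of Theorem \ref{thm:epsilonoptimality}.
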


\begin{proof}
\underline{Step 1}: We show the inequality $\liminf_{N\to\infty} V_N \geq V_{\infty}$. To that end, choose $\mathscr{S}_N$ and $\beta^N \in \mathcal{B}_T(\mathscr{S}_N)$ such that 
\begin{equation*} 
\mathbb{E}^{N}\left[\int_{0}^{T}\beta^{1,N}_s\,ds + \gamma\Lpertmin_{T-}^{N} (\beta^N)\right] \leq V_N + \frac{1}{N}.
\end{equation*}
Arguing as in the proof of Theorem \ref{thm:finitedimconvergence}, we see that $\EmpMext_N = \frac{1}{N} \sum_{i=1}^{N} \delta_{(X_{0-}^{i}+N^{-\kappa}+B,\beta^N,\Lpertmin^N (\beta^N))}$ is tight, 
and by Theorem \ref{thm:finitedimconvergence} converges to a limit $\EmpMext$ that is supported on the set of solutions to the McKean--Vlasov problem \eqref{eq::controlled}-\eqref{eq::controlled2}. By Skorokhod representation, we may assume that this happens almost surely on some stochastic basis $\mathscr{S}$. Since the map $(w,b,\ell) \mapsto \int_{0}^{T} b_s\,ds + \gamma\ell_{T-}$ is bounded and lower semicontinuous on $C([0,\infty)) \times S_T \times M$, Fatou's Lemma and the Portmanteau theorem imply
\begin{align*}
\liminf_{N\to\infty}\mathbb{E}^{N}\left[\int_{0}^{T}\beta^{1,N}_s\,ds + \gamma\Lpertmin_{T-}^N (\beta^N)\right] &= \liminf_{N\to\infty} \mathbb{E}\left[\int_{}\left(\int_{0}^{T} b_s\,ds + \gamma\ell_{T-} \right)~\mathrm{d}\EmpMext_N (w,b,\ell)\right] \\
& \geq \mathbb{E}\left[\liminf_{N\to\infty}\int_{}\left(\int_{0}^{T} b_s\,ds + \gamma\ell_{T-} \right)~\mathrm{d}\EmpMext_N (w,b,\ell)\right] \\
& \geq \mathbb{E}\left[\int_{}\left(\int_{0}^{T} b_s\,ds + \gamma\ell_{T-} \right)~\mathrm{d}\EmpMext (w,b,\ell)\right].
\end{align*}
Defining $\mathscr{S}(\omega) = (C([0,\infty))\times S_T \times M, \mathscr{B}(C([0,\infty))\times S_T \times M), \EmpMext(\omega))$, let $(w,b,\ell)$ denote the canoncial process on $\mathscr{S}$. By the arguments in the proof of Theorem \ref{thm:finitedimconvergence} we have that $w$ is Brownian motion under $\EmpMext(\omega)$ with respect to the filtration generated by $(w,b,\ell)$, and we see that $\mathscr{S}(\omega)$ is an admissible reference space for almost every $\omega$. Since $\EmpMext(\omega)$ corresponds to the law of a solution to the McKean--Vlasov problem \eqref{eq::controlled}-\eqref{eq::controlled2}, it follows that $\int_{}\left(\int_{0}^{T} b_s\,ds + \gamma\ell_{T-} \right)~\mathrm{d}\EmpMext (w,b,\ell) \geq V_{\infty}$ almost surely. We have therefore obtained 
\begin{equation}
\liminf_{N\to\infty} V_N \geq \liminf_{N\to\infty}\mathbb{E}^{N}\left[\int_{0}^{T}\beta^{1,N}_s\,ds +\gamma \Lpertmin_{T-}^N(\beta^N)\right] \geq V_{\infty}.
\end{equation}

\underline{Step 2}: We show that $\limsup_{N\to\infty} V_N \leq V_{\infty}$. Let $\mathscr{S}^\star$ be a probability space and $\beta^{\star} \in \mathcal{B}(\mathscr{S}^\star)$ be an optimizer attaining $V_{\infty}$, whose existence  was shown in Theorem \ref{thm::optimizerexists}. Let $\mathscr{S}_N^\star$ be the product space obtained by taking $N$ copies of $\mathscr{S}^\star$, and consider the (random) cost functional
\begin{equation}
c_N(b^N,\ell) = \int_{0}^{T}b^{1,N}_s\,ds + \gamma\ell_{T-} + \gamma N^{\kappa} \|\Gamma_N[\ell,b^N] - \ell\|_{\infty}, \quad b^N \in S_T^N, ~ \ell \in M,
\end{equation}
where $\|\cdot \|_{\infty}$ is the supremum norm on $[0,\infty)$. Let $\mathsf{M}(\mathscr{S})$ be the set of all $\mathcal{F}$-measurable random variables, defined on the stochastic basis $\mathscr{S}$, taking 
values in $M$ and consider the problem
\begin{equation}
\hat V_N := \inf_{\substack{\beta^N \in \mathcal{B}(\mathscr{S}_N^\star) \\ \mathsf{L} \in \mathsf{M}(\mathscr{S}_N^\star)}} \mathbb{E}_{\star}^{N}\left[c_N(\beta^N,\mathsf{L})\right]. 
\end{equation}
Letting $\betaNstar$ be the vector obtained by taking $N$ i.i.d.\ copies of $\beta^\star$, and choosing $\mathsf{L} \equiv \underline{L}(\beta^\star)$, which we abbreviate in the following with $\underline{L} := \underline{L}(\beta^\star)$, we obtain
\begin{align*}
\hat V_N \leq \mathbb{E}_{\star}^{N}\left[\int_{0}^{T}\beta^{\star}_s\,ds + \gamma \underline{L}_{T-} + \gamma N^{\kappa} \|\Gamma_N[\underline{L},\betaNstar] - \underline{L}\|_{\infty}\right].
\end{align*}
Noting that $\Gamma_N[\underline{L},\betaNstar]$ is the empirical cumulative distribution function of the i.i.d.\ random variables $\tau_{\beta^\star}^{i} := \inf\{t \geq 0: X_{0-}^{i} + \int_{0}^{t} \beta_s^{\star,i} \mathrm{d}s + B_{t}^i - \Lmin_t \leq 0 \}$, and that $\mathbb{P}(\tau_{\beta^\star}^i \leq t) = \underline{L}_t(\beta^\star)$, the same estimates as in Step 1 of the proof of Proposition 6.1 in \cite{CRS} show that 
\begin{align*} 
\lim_{N\to\infty} \mathbb{E}\left[N^{\kappa} \|\Gamma_N[\underline{L},\betaNstar] - \underline{L}\|_{\infty}\right] = 0.
\end{align*}
We have therefore shown that
\begin{align*} 
\limsup_{N\to\infty} \hat V_N \leq \mathbb{E}_{\star}^{N}\left[\int_{0}^{T}\beta^{\star}_s\,ds + \gamma\underline{L}_{T-}(\beta^\star)\right] = V_{\infty}.
\end{align*}
Now choose a sequence $\hat\beta^N \in \mathcal{B}_T(\mathscr{S}_N^\star), \mathsf{L}^N \in \mathsf{M}(\mathscr{S}_N^\star)$ 
such that $\mathbb{E}_{\star}^{N}[c(\hat\beta^N,\mathsf{L}^N)] \leq \hat V_N + \frac{1}{N}$.

Define the sequence of events $A^N = \{\omega \in \Omega^N \colon  \|\Gamma_N[\mathsf{L}^N,\hat\beta^N] - \mathsf{L}^N\|_{\infty} \leq N^{-\kappa}\}$ and set
$\hat{\mathsf{L}}^N = \mathsf{L}^N \ind{A^N} + \underline{L}^{N}(\hat \beta^N) \ind{\Omega^N \setminus A^N}$, where $\underline{L}^{N}(\hat \beta^N)$ is the
minimal solution on $\mathscr{S}^*_N$ with drift $\hat \beta^N$. With this choice, $\hat{\mathsf{L}}^N$ is in $ \mathsf{M}(\mathscr{S}^*_N)$ and satisfies  
\begin{align*}
    \|\Gamma_N[\hat{\mathsf{L}}^N,\hat\beta^N] - \hat{\mathsf{L}}^N\|_{\infty} \leq N^{-\kappa}, \quad
    \mathbb{E}_{\star}^{N}[c(\hat\beta^N,\hat{\mathsf{L}}^N)] \leq \mathbb{E}_{\star}^{N}[c(\hat\beta^N,\mathsf{L}^N)] \leq \hat V_N + \frac{1}{N}.
\end{align*}
Here we used that $\underline{L}^{N}(\hat \beta^N) \leq 1$ and $\Gamma_N[\underline{L}^{N}(\hat \beta^N),\hat{\beta}^N] = \underline{L}^{N}(\hat \beta^N)$.
This implies
\begin{align} \label{eq::Llowerbound}
\hat{\mathsf{L}}^N \geq \Gamma_N [\hat{\mathsf{L}}^N,\hat \beta^N] - N^{-\kappa}.
\end{align} 
Since $\hat{\mathsf{L}}^N \geq - N^{-\kappa}$, the monotonicity of $\Gamma_N$ implies that 
\begin{align}\label{eq::Llowerbound2}
\hat{\mathsf{L}}^N \geq \Gamma_N[-N^{-\kappa},\hat \beta^N] - N^{-\kappa} = \tilde{\Gamma}_N [0,\hat \beta^N] - N^{-\kappa},
\end{align}
where $\tilde{\Gamma}_N$ is defined as in \eqref{eqdef:GammaNbeta} with initial condition $\tilde{X}_{0-}^{i,N} := X_{0-}^{i} + \alpha N^{-\kappa}$. Combining \eqref{eq::Llowerbound2} with \eqref{eq::Llowerbound} and again using the monotonicity of $\Gamma_N$, we obtain
\begin{align*}
\hat{\mathsf{L}}^N \geq \Gamma_N[\tilde{\Gamma}_N [0,\hat \beta^N] - N^{-\kappa}, \hat \beta^N] - N^{-\kappa} = \tilde{\Gamma}_N^{(2)}[0,\hat \beta^N] - N^{-\kappa}.
\end{align*}
A straightforward induction then shows that $\hat{\mathsf{L}}^N \geq \tilde{\Gamma}_N^{(k)}[0,\hat \beta^N] - N^{-\kappa}$ for all $k \in \mathbb{N}$, and Lemma \ref{lem:miniteration} then yields that we have 
\begin{align*}
\hat{\mathsf{L}}^N \geq \Lpertmin^N(\hat \beta^N) - N^{-\kappa},
\end{align*}
where $\Lpertmin^N(\hat \beta^N)$ corresponds to the loss process associated to the particle system with initial condition $\tilde{X}_{0-}^{N}$.
This yields
\begin{align*}
V_N - \gamma N^{-\kappa} &\leq \mathbb{E}_{\star}^{N} \left[\int_{0}^{T} \hat\beta_s^{1,N}\,ds + \gamma\Lpertmin_{T-}^N(\hat \beta^N)  \right] - \gamma N^{-\kappa} \leq \mathbb{E}_{\star}^{N} \left[\int_{0}^{T} \hat\beta_s^{1,N}\,ds + \gamma\hat{\mathsf{L}}_{T-}^{N}\right] \\
&\leq \mathbb{E}_{\star}^{N} \left[c(\hat \beta^N,\hat{\mathsf{L}}^N)\right] \leq \hat V_N + \frac{1}{N}.
\end{align*}
Since we have already shown that ${\limsup_{N\to\infty} \hat V_N \leq V_{\infty}}$, this shows ${\limsup_{N\to\infty} V_N \leq V_{\infty}}$, which concludes the proof.
\end{proof}

\begin{remark}
We conjecture that the perturbation in the initial condition of the particle system in Theorem \ref{thm::optimizersconverge} is an artefact of our proof technique rather than a necessity. 
\end{remark}

\begin{theorem}\label{thm:epsilonoptimality}
Let $\mathscr{S}^\star$ be a probability space and $\beta^{\star} \in \mathcal{B}_T(\mathscr{S}^\star)$ be an optimizer attaining $V_{\infty}$.  Let $\mathscr{S}_N^\star$ be the product space obtained by taking $N$ copies of $\mathscr{S}^\star$ and let $\betaNstar$ be the vector obtained by taking $N$ i.i.d.\ copies of $\beta^\star$. Then, $\betaNstar$ is $\epsilon$-optimal for the particle system, i.e., for every $\epsilon > 0$, it holds that
\begin{align*}
J_N(\betaNstar) \leq V_N + \epsilon
\end{align*}
for $N$ sufficiently large.
\end{theorem}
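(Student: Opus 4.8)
The plan is to squeeze $J_N(\betaNstar)$ between $V_N$ and $V_\infty+o(1)$. First I would note that $\betaNstar$ — the vector of $N$ i.i.d.\ copies of $\beta^\star$ — is $N$-exchangeable, progressively measurable on the product space $\mathscr{S}_N^\star=(\mathscr{S}^\star)^N$, and satisfies $\mathbb{P}(\beta^{\star,1}\in S_T)=1$, hence is admissible for the perturbed $N$-particle problem; this gives $V_N\le J_N(\betaNstar)$ for free. Since Theorem \ref{thm::optimizersconverge} already provides $V_N\to V_\infty$, it will be enough to prove $\limsup_{N\to\infty}J_N(\betaNstar)\le V_\infty$, because then $J_N(\betaNstar)-V_N\to 0$, which is exactly the claim. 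Writing $J_N(\betaNstar)=\mathbb{E}_\star^N\big[\int_0^T\beta_s^{\star,1}\,ds\big]+\gamma\,\mathbb{E}_\star^N\big[\Lpertmin_{T-}^N(\betaNstar)\big]$, the running-cost term equals $\mathbb{E}^\star\big[\int_0^T\beta_s^\star\,ds\big]$ exactly since $\beta^{\star,1}$ has law $\law(\beta^\star)$, and as $V_\infty=J^{(\mathscr{S}^\star)}(\beta^\star)=\mathbb{E}^\star\big[\int_0^T\beta_s^\star\,ds\big]+\gamma\,\underline{L}_{T-}(\beta^\star)$ by \eqref{eq::objective}, everything comes down to establishing
\[
\limsup_{N\to\infty}\mathbb{E}_\star^N\big[\Lpertmin_{T-}^N(\betaNstar)\big]\;\le\;\underline{L}_{T-}(\beta^\star).
\]

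To get this, I would work on $\mathscr{S}_N^\star$ and introduce the i.i.d.\ copies $\underline{X}_t^i:=X_{0-}^i+\int_0^t\beta_u^{\star,i}\,du+B_t^i-\underline{\Lambda}_t(\beta^\star)$ of the minimal McKean--Vlasov solution driven by $\beta^\star$ (the i.i.d.\ data $(X_{0-}^i,\beta^{\star,i},B^i)$ being supplied by the product structure), together with $\tau^i:=\inf\{s\ge 0:\underline{X}_s^i\le 0\}$, which are i.i.d.\ with common distribution function $\underline{L}(\beta^\star)$, and their empirical distribution function $\hat F_N:=\frac1N\sum_{i=1}^N\mathds{1}_{\{\tau^i\le\,\cdot\,\}}$. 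The Dvoretzky--Kiefer--Wolfowitz inequality then controls the event $E_N:=\{\|\hat F_N-\underline{L}(\beta^\star)\|_\infty\le N^{-\kappa}/\alpha\}$ (supremum over $[0,\infty)$) via $\mathbb{P}(E_N^c)\le 2\exp(-2N^{1-2\kappa}/\alpha^2)\to 0$, using $\kappa\in(0,1/2)$.

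The crux is a deterministic comparison on $E_N$. Let $\tilde{\Gamma}_N=\tilde{\Gamma}_N[\cdot,\betaNstar]$ denote the operator \eqref{eqdef:GammaNbeta} built with the perturbed initial datum $\tilde X_{0-}^{i,N}=X_{0-}^i+N^{-\kappa}$, and let $\ell^{**}$ be the càdlàg increasing function with $\alpha\,\ell^{**}:=\underline{\Lambda}(\beta^\star)+N^{-\kappa}$. Plugging $\ell^{**}$ into $\tilde{\Gamma}_N$, the two $N^{-\kappa}$ terms cancel, the $i$-th driven path becomes exactly $\underline{X}^i$, and $\tilde{\Gamma}_N[\ell^{**}]_t=\frac1N\sum_i\mathds{1}_{\{\inf_{s\le t}\underline{X}_s^i\le 0\}}=\hat F_N(t)$; on $E_N$ this is $\le\underline{L}_t(\beta^\star)+N^{-\kappa}/\alpha=\ell^{**}_t$, i.e.\ $\tilde{\Gamma}_N[\ell^{**}]\le\ell^{**}$. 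Because $0\le\ell^{**}$ and $\tilde{\Gamma}_N$ is monotone, an induction gives $\tilde{\Gamma}_N^{(k)}[0,\betaNstar]\le\ell^{**}$ for all $k$, so Lemma \ref{lem:miniteration} yields $\Lpertmin^N(\betaNstar)=\tilde{\Gamma}_N^{(N)}[0,\betaNstar]\le\ell^{**}$ on $E_N$, hence $\Lpertmin_{T-}^N(\betaNstar)\le\underline{L}_{T-}(\beta^\star)+N^{-\kappa}/\alpha$ on $E_N$ while $\Lpertmin_{T-}^N(\betaNstar)\le 1$ always; taking expectations and sending $N\to\infty$ then gives the displayed inequality and with it the theorem.

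I expect the main obstacle to be precisely the need for an \emph{upper} bound on the particle loss $\Lpertmin_{T-}^N(\betaNstar)$: the weak-convergence toolkit of Section \ref{sec:convergence} (Portmanteau plus lower semicontinuity of $\ell\mapsto\ell_{T-}$ on $M$) only delivers lower bounds on $\liminf_N$, which suffices for Step 1 of Theorem \ref{thm::optimizersconverge} but not here, so a more hands-on comparison is needed. The comparison above works because the whole effect of the initial-condition perturbation is to lift the particle system's fixed-point iteration by the deterministic margin $N^{-\kappa}/\alpha$ (in loss scale), which by the concentration estimate dominates, with probability tending to one, the $O(N^{-1/2})$ fluctuation of the empirical loss $\hat F_N$ around the true mean-field loss $\underline{L}(\beta^\star)$ — which is exactly why the perturbation was built into the statement in the first place.
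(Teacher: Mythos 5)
Your argument is correct, and it reaches the conclusion by a route that is organized differently from the paper's. The paper proves the theorem by recycling the auxiliary penalized problem from Step 2 of Theorem \ref{thm::optimizersconverge}: it fixes the control at $\betaNstar$, considers $\bar V_N=\inf_{\mathsf{L}}\mathbb{E}[c_N(\betaNstar,\mathsf{L})]$ with the penalty $N^{\kappa}\|\Gamma_N[\mathsf{L}]-\mathsf{L}\|_\infty$, shows $\limsup_N\bar V_N\le V_\infty$ by testing with $\mathsf{L}\equiv\underline{L}(\beta^\star)$ (the expected empirical-cdf deviation estimate from \cite{CRS} playing the role your DKW bound plays), and then shows via the same monotone $\tilde\Gamma_N$-induction that any near-minimizer $\mathsf{L}^N$ satisfies $\mathsf{L}^N\ge\Lpertmin^N(\betaNstar)-N^{-\kappa}$, which sandwiches $J_N(\betaNstar)$ between $V_N$ and $\bar V_N+2\epsilon/3$. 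You bypass the penalized problem entirely: you bound $\Lpertmin^N(\betaNstar)$ pathwise on the high-probability DKW event by exhibiting the explicit barrier $\ell^{**}=\underline{L}(\beta^\star)+N^{-\kappa}/\alpha$, verifying $\tilde\Gamma_N[\ell^{**}]=\hat F_N\le\ell^{**}$ there (the $N^{-\kappa}$ shifts cancel exactly, which is consistent with the perturbation $\tilde X^{i,N}_{0-}=X^i_{0-}+N^{-\kappa}$ in the statement of Theorem \ref{thm::optimizersconverge}), and then iterating monotonicity from $0$ and invoking Lemma \ref{lem:miniteration}. Both proofs use the same core ingredients — exchangeability/i.i.d.\ structure of $(\tau^i_{\beta^\star})$ with cdf $\underline{L}(\beta^\star)$, monotonicity of $\Gamma_N$, the $N$-step iteration characterisation of the minimal particle solution, a concentration estimate dominated by the $N^{-\kappa}$ margin, and $V_N\to V_\infty$ from Theorem \ref{thm::optimizersconverge} — but your supersolution comparison yields the cleaner intermediate statement $\Lpertmin^N_{T-}(\betaNstar)\le\underline{L}_{T-}(\beta^\star)+N^{-\kappa}/\alpha$ with probability $1-2e^{-2N^{1-2\kappa}/\alpha^2}$, i.e.\ a direct quantitative upper bound on the controlled particle loss, whereas the paper's route only produces the value comparison through $\bar V_N$. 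Two small points to make explicit if you write this up: the DKW inequality must be applied to the possibly defective distribution of $\tau^i$ (the $\tau^i$ can be infinite with positive probability), which is handled by a monotone reparametrisation or by truncation and is exactly the estimate the paper imports from Step 1 of Proposition 6.1 in \cite{CRS}; and the event $E_N$ should be checked to be measurable (take the supremum over rationals, using right-continuity), which is routine.
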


\begin{proof}
Let $\epsilon > 0$ be given. Recall the notation introduced in Step 2 of the proof of Theorem \ref{thm::optimizersconverge}. Consider the problem
\begin{equation}
\bar V_N := \inf_{\mathsf{L} \in \mathsf{M}(\mathscr{S}_N^\star)} \mathbb{E}_{\star}^{N}\left[c_N(\betaNstar,\mathsf{L})\right]. 
\end{equation}
Proceeding as in Step 2 of the proof of Theorem \ref{thm::optimizersconverge}, it follows that $\limsup_{N\to\infty} \bar V_N \leq V_{\infty}$. By Theorem \ref{thm::optimizersconverge}, we have $\lim_{N \to \infty} V_N = V_{\infty}$, and therefore $\bar V_N \leq V_N + \epsilon / 3$ for $N$ large enough. Arguing as in Step 2 of the proof of Theorem $\ref{thm::optimizersconverge}$, we can find $\mathsf{L}^N \in \mathsf{M}(\mathscr{S}_N^\star)$
such that $\mathbb{E}[c_N(\betaNstar, \mathsf{L}^N)] \leq \bar{V}_N + \epsilon/3$ and $ {\mathsf{L}}^N \geq \Lpertmin^N(\betaNstar) - N^{-\kappa}$ holds. Choosing $N$ large enough such that $\gamma N^{-\kappa} < \epsilon/3$, we obtain
\begin{align*}
J_N(\betaNstar)\leq \mathbb{E}_{\star}^{N} \left[\int_{0}^{T} \beta^\star_s\,ds + \gamma\hat{\mathsf{L}}_{T-}^{N}\right] + \gamma N^{-\kappa} \leq \bar{V}_N + 2\epsilon/3 \leq V_N +\epsilon.
\end{align*}
\end{proof}


\section{Numerical solution of the MFC problem} \label{sec:3a}

In this section, we present a numerical scheme for the central agent's mean-field control problem.
We directly compute the optimal feedback control by a policy gradient method 
(PGM; see Section~\ref{subsec:pol_grad} and  \ref{subsec:grad_desc}) applied to a
regularised version   of the dynamics and the objective function (see Section~\ref{sec:reg}).
The gradient is approximated by finite difference schemes for the density of the forward
process and a decoupling field for an adjoint process (see Section~\ref{subsec:num_imp}).
This will allow us to conduct parameter studies of the optimal strategies as well as the resulting losses and costs in Section~\ref{subsec:comp_anal}.

Recall from \eqref{eq::objective} and above the process $\underline{X}(\beta)$ corresponding to the minimal solution $\underline{\Lambda}(\beta)$, 
and write the objective function as
\begin{eqnarray}
	J(\beta) 
=	\E \Big[ \int_0^T (\beta_t + \gamma \dot{L}_t) \, dt \Big] ,
	\label{eq::objective2}
\end{eqnarray}
where 
$L_t = \pp\Big(\inf_{0\le s < T} \underline{X}_s(\beta) \le 0\Big)$, and derivatives of $L$ are defined in a distributional sense
if necessary.

In the case of regular solutions, the absorbed process  associated with \eqref{eq::controlled},
$\widehat{\underline{X}}=\underline{X}_t \mathds{1}_{\{\tau> t\}}$, for $\tau$ the hitting time of 0,
has a sub-probability density $p$ supported on $(0,\infty)$ and an atomic mass at $0$. Similarly as in  \cite[Theorem 1.1]{DNS},  it
satisfies the forward Kolmogorov
equation
\begin{equation}\label{p_PDE_beta}
	\begin{split}
		& \partial_t p + \partial_x (\beta  p)=\frac{1}{2}\partial_{xx}p+\dot{\Lambda}_t\partial_x p,\;\;x\ge0,\;\;t\in \T,  \\ 
	 	& p(0,x)=f(x),\;\; x\ge 0\quad\text{and}\quad p(t,0)=0,\;\;t\in\T, 
	\end{split}
\end{equation}
 where
\begin{equation}\label{p_bc}
	\begin{split}
		\Lambda_t = \alpha \Big(1 - \int_0^\infty p(t,x)\, dx \Big), \qquad t\in\T,
	\end{split}
\end{equation}
and where  $\T$ denotes the set of all $t\in[0,T]$ where $t\rightarrow L_t$ is differentiable.
If $ t\notin\T$, in particular in the event of a blow-up at $t$, we have the following jump condition for the solution of
\eqref{p_PDE_beta}, $p(t-,x) = p(t,x-\Lambda_t + \Lambda_{t-})$.

Assuming again regular enough solutions where we can take the derivative with respect to time of the equation ${L_t = 1- \int_{0}^{\infty} p(t,x)~dx}$, we find  that
\begin{align*}
\dot{L}_t = -\int_{0}^{\infty}\partial_{t}p~{d}x
= -\int_{0}^{\infty} \frac{1}{2} \partial_{xx}p~{d}x -\int_{0}^{\infty}
\alpha\dot{\L}\partial_{x}p~{d}x = \frac{1}{2} \partial_{x} p(t,0).
\end{align*}

Moreover, we can rewrite the controlled dynamics of $\widehat{\underline{X}}$ for $t<\tau$ as
\begin{align}\label{eq::see}
	dX_t &= (\beta_t - \alpha \dot{L}_t) \, dt + dB_t.
\end{align}

Note that the current control problem lies outside the standard MFC context due to the following three main aspects:
(i) the interaction through the boundary leads to a time derivative of the measure component, which makes the problem as written in \eqref{eq::see} (without replacing $\dot L_t$ by $\partial_x p(t,0)/2$) `non-Markovian';
(ii) the drift coefficient is non-Lipschitz in the measure component;
(iii) the dynamics are defined by an absorbed process, which moreover has an irregular drift coefficient (as $t\to L_t$ can be discontinuous in time).
We will address these points by a regularisation in the next section, which will subsequently allow us to apply a policy gradient method, which is inspired by \cite{RSZ22}.

\subsection{Regularisation}\label{sec:reg}

Denote by $\nu_t$ the law of $\widehat{\underline{X}}_t$ corresponding to $p(t,x) dx$ in the regular case where a density exists.
For (small) $h>0$,
we approximate $\shalf p_x(t,0)$ in terms of the measure $\nu_t$  by
\[
\dot{L}^h_t  =
\frac{1}{2} \int_{-\infty}^\infty (- \partial \phi^h)(x) \, \nu_t(dx)= \frac{1}{2}\langle - \partial \phi^h, \nu_t \rangle ,
\]
where $\phi^h(x)$ is a smooth approximation of the Dirac $\delta$ distribution with support in $[0,h]$ and where the bracket notation is used to denote the integral.  

We then define a 
smooth function $\Phi^h: \mathbb{R} \rightarrow \mathbb{R}$ such that $\Phi^h(x)=0$ for
$x\le - \kappa h$ and $\Phi^h(x)=1$ for $x\ge 0$, for some $\kappa>0$,
and consider the dynamics
\begin{align}
\label{eqn:X_reg}
	dX_t &= a^h(X_t,\beta_t,\nu_t) \, dt + \sigma^h(X_t) \, dB_t,
\end{align}
where
\begin{eqnarray*}
a^h(x,b,\nu)   = 
\Phi^h(x) \left(b +\text{\small $\frac{\alpha}{2}$} \langle \partial \phi^h, \nu \rangle  \right),
\qquad
\sigma^h(x) = \Phi^h(x).
\end{eqnarray*}
For completeness, we give the specific $\phi^h$ and $\Phi^h$ used in our computations in
Appendix \ref{subsec:smooth}.
There, we also show the graphs of $\phi^h$ and its first two derivatives for the value 
$h=10^{-3}$, which is frequently used in our tests below.

Under the dynamics \eqref{eqn:X_reg},
the process does not get absorbed at 0, but once it crosses 0 from above its diffusion and drift coefficients decay rapidly so that with high probability it remains in the interval $[-\kappa h,0]$ (see Figure \ref{fig:control_bmax} for an illustration of the density of such a process). 

The reason why we  consider these modified dynamics is to cast the absorbed process  $\widehat{\underline{X}}_t$ into a standard McKean--Vlasov framework.
The objective function can also be rewritten and becomes
\begin{align}
J(\beta) &= 
\E \Big[ \int_0^T f^h(\beta_t,\nu_t)  \, dt \Big],
\qquad f^h(b,\nu)  =
b + \text{\small $\frac{\gamma}{2}$} \langle - \partial \phi^h, \nu \rangle.
\label{eqn:J_reg}
\end{align}

A crucial point here is that both the coefficients in the dynamics and the objective can be written in terms of $\beta$ and $\nu$ alone, i.e.\ without any time (or spatial) derivatives of the measure flow $(\nu_t)$.
Note also that  $(a^h, \sigma^h, f^h)$ satisfy the
differentiability assumptions made  \cite[Section 3]{acciaio2019extended}, which we shall need for the Fréchet differentiability of the function $F$ defined in \eqref{eq:FG} below.

Once we have the optimal control in feedback form $\beta^\star_t = \beta^\star(t,x)$ for $\beta^\star: [0,T] \times [0,\infty) \rightarrow [0,b_{\max}]$, and the associated density $p$ of $\widehat{\underline{X}}$, we can compute the optimal loss and cost pair as
\begin{eqnarray}
\label{opt_loss}
L^\star_T &=&  
\frac{1}{2} \int_0^T \int_{-\infty}^\infty (- \partial \phi^h)(x) \ p(t,x) \, dx dt, \\
\label{opt_cost}
C^\star_T &=& 
\int_0^T \int_{-\infty}^\infty \beta^\star(t,x) \ p(t,x) \, dx dt.
\end{eqnarray}



\subsection{Policy gradients}
\label{subsec:pol_grad}

We follow here in spirit the approach of \cite{RSZ22}.
We consider first a slightly more general form of the MFC  problem, written as a nonsmooth optimization problem
over the Hilbert space $\cH^2(\sR)$ of $\sR$-valued square integrable, progressively measurable processes,
\begin{equation}\label{eq:optimization}
\inf_{\beta \in \mathcal{H}^{2}(\R)} ( F(\beta)+G(\beta) ),
\end{equation}
with  the functionals  $F:\cH^2(\R)\to \sR$  and  
   $G:\cH^2(\R)\to \sR\cup\{\infty\}$  defined as follows:
for all $\beta\in \cH^2(\sR)$,
\begin{align}\label{eq:FG}
F(\beta)\coloneqq\E \bigg[
\int_0^T f^h(\beta_t,\nu_t) \, \mathrm{d}t 
\bigg],
\quad
G(\beta)\coloneqq\E \bigg[
\int_0^T  g(\beta_t) \, \d t\bigg],
\end{align}
where $f^h$ is defined in \eqref{eqn:J_reg}.




The splitting of the objective function into $F$ and $G$ allows for a separate treatment of the smooth component $f^h$ and a non-smooth component $g$. 
We will use $g$ to incorporate the constraints on $\beta$, specifically,
$g(x) = 0$ for $x \in [0,b_{\max}]$ and $\infty$ outside.
It is clear that $G:\cH^2(\R)\to \sR\cup\{\infty\}$ is convex due to the convexity of $g$.

Assuming that $\nu_t$ lies in
the Wasserstein
space of probability measures on $\mathbb{R}$ with finite second moment, denoted by  $\mathcal{P}_2(\R)$, 
we introduce the Hamiltonian  $H: \R \times \R \times  \mathcal{P}_2(\R)\times \R \times \R \to \R$ by
\begin{equation}\label{eq:mfcE_full_hamiltonian} 
H(x,b,\nu,y,z)\coloneqq a^h(x,b,\nu) y + \sigma^h(x) z +  f^h(b,\nu),
\end{equation}
with 
\begin{equation}\label{eq:mfcE_full_hamiltonian_x} 
\partial_x H(x,b,\nu,y,z)=
\partial \Phi^h(x) \left(b + \text{\small $\frac{\alpha}{2}$} \langle \partial \phi^h, \nu \rangle  \right) y
+ \partial \Phi^h(x) z.
\end{equation}
Moreover, 
by 
\cite[Lemma 3.1]{acciaio2019extended}, 
 $F:\cH^2(\R)\to \sR$  is Fr\'{e}chet differentiable
and its derivative
 $\nabla F:\cH^2(\R)\to \cH^2(\R)$
 satisfies
 for all $\beta\in \cH^2(\sR)$,
 \bb
 {(\nabla F)(\beta)_t}=
    (\partial_{b} H)(X^{\beta}_t,\beta_t, \nu_t, Y^{\beta}_t, Z^{\beta}_t) 
 + \tilde{\mathbb{E}}[ (\partial_{\nu} H)(\tilde{X}^{\beta}_t, \tilde{\beta}_t, \nu_t, \tilde{Y}^{\beta}_t, \tilde{Z}^{\beta}_t)(X^{\beta}_t)],
\ee
$\d t\otimes \d \PP$-a.e.
 Here, 
 $X^\beta$ 
is the state process controlled by $\beta$, satisfying \eqref{eqn:X_reg}, and
 $(Y^{\beta}, Z^{\beta})$ are square integrable adapted adjoint processes  such that 
for all $t\in [0,T]$,  
\begin{align}\label{adjointa}
\begin{split}
\mathrm{d}Y^{\beta}_t&=
(-\partial_x H(X^{\beta}_t,\beta_t,\nu_t,Y_t^{\beta}, Z_t^{\beta})
-\tilde{\E}[(\partial_{\nu} H)(\tilde{X}^{\beta}_t, \tilde{\beta}_t,\nu_t,\tilde{Y}^{\beta}_t, \tilde{Z}^{\beta}_t)(X^{\beta}_t)] )\,\d t
+Z^{\beta}_t\, d W_t,
 \\
 Y^{\beta}_T& = 0.
 \end{split}
\end{align} 
Above and hereafter, we 
use  the tilde notation to denote an independent copy of a random variable as in \cite{acciaio2019extended}.

We now consider controls in feedback form, namely $\beta_t = \beta(t,X_t)$,
which determine $X^\beta_t$ as solution of
\begin{equation}\label{forward2}
\mathrm{d} X_t= a^h(X_t, \beta(t,X_t),\nu_t) 
\, dt +  \sigma^h(X_t) \, dW_t.
\end{equation}

Then a sufficiently smooth decoupling field $u$ such that $Y_t= u(t,X_t)$ and $Z_t=\sigma^h(X_t) \partial_x u(t,X_t)$ satisfies

\begin{equation}
\begin{split}\label{decoupling_pde}
&\partial_t u + \text{\small $\frac{1}{2}$}  \sigma^h(x)^2 \partial_x^2 u + a^h(x, \beta(t,x),\nu) \partial_x u = 
-\partial_x H(x,\beta(t,x),\nu,u,\sigma^h(x) \partial_x u)\\
&\qquad \qquad \qquad \qquad \quad- \tilde{\E}[(\partial_{\nu} H)(\tilde{X}_t, \beta(t,\tilde{X}_t),\nu_t,u(t,\tilde{X}_t), \sigma^h(\tilde{X}_t) \partial_x u(t,\tilde{X}_t))(x)],
\end{split}
\end{equation}
with terminal condition $u(T,\cdot) = 0$.

\subsubsection*{Computation of gradient by decoupling fields}

In our application, we can express the right-hand side of \eqref{decoupling_pde} more explicitly. For
the Hamiltonian \eqref{eq:mfcE_full_hamiltonian} with $a^h$ and $f^h$ defined by
(\ref{eqn:X_reg}) and (\ref{eqn:J_reg}), respectively,
we have, by \cite[Section 5.2.2, Example 1]{CD:18}
\begin{eqnarray*}
(\partial_{\nu} H)(\tilde{X}_t, \tilde{\beta}_t,\nu_t,\tilde{Y}_t, \tilde{Z}_t)(X_t)
&=& (\partial_{\nu}a^h)(\tilde{X}_t, \tilde{\beta}_t,\nu_t)(X_t) \, \tilde{Y}_t
+ (\partial_{\nu} f^h)(\tilde{\beta}_t,\nu_t)(X_t) \\
&=& \text{\small $\frac{\alpha}{2}$} \Phi^h(\widetilde{X}_t) \partial^2 \phi^h(X_t) \tilde{Y}_t -
\text{\small $\frac{\gamma}{2}$} \partial^2 \phi^h(X_t), \\
- \tilde{\E}[(\partial_{\nu} H)(\tilde{X}_t, \tilde{\beta}_t,\nu_t,\tilde{Y}_t, \tilde{Z}_t)(X_t)] &=& 
\text{\small $\frac{1}{2}$} 
(\gamma - \alpha \tilde{\E}[\Phi^h(\widetilde{X}_t) \tilde{Y}_t])
\, \partial^2 \phi^h(X_t) \\
 &=&
\text{\small $\frac{1}{2}$} 
\left(\gamma - \alpha \langle \Phi^h u(t, \cdot), \nu_t \rangle \right)
\partial^2 \phi^h(X_t).
\end{eqnarray*}

Consequently, for the decoupling field $u$, with $z=\sigma^h \partial_x u= \Phi^h\partial_x u$,
\begin{align*}
&\partial_t u + \text{\small $\frac{1}{2}$} \sigma^h(x)^2 \partial_x^2 u + a^h(x,\beta(t,x),\nu_t) \partial_x u
 = 
- \partial \Phi^h(x) (\beta(t,x) +\text{\small $\frac{\alpha}{2}$} \langle  \partial \phi^h, \nu_t \rangle) u \\
& \hspace{6.5 cm} -  \partial \Phi^h(x) \Phi^h(x)\partial_x u + \text{\small $\frac{1}{2}$} 
\left(\gamma - \alpha \langle \Phi^h u(t, \cdot), \nu_t  \rangle \right)
\partial^2 \phi^h(x),
\nonumber
\end{align*}
which can be re-written as
\begin{eqnarray}\nonumber
\partial_t u + \text{\small $\frac{1}{2}$} \, \partial_x\! \left(\Phi^h(x)^2 \partial_x u \right) + 
(\beta(t,x) +\text{\small $\frac{\alpha}{2}$} \langle  \partial \phi^h, \nu_t \rangle) \, \partial_x\!\left(\Phi^h(x)  u \right)
= \text{\small $\frac{1}{2}$} 
\left(\gamma \!-\! \alpha \langle \Phi^h u(t, \cdot), \nu_t  \rangle \right)
\partial^2 \phi^h(x).
\label{decoupling_pde_R}
\end{eqnarray}
As
$(\partial_{b} H)(x,b, \nu, y,z) = \Phi^h(x) y + 1$,
we obtain
\bb
 (\nabla F)(\beta)(t,x)=
 \Phi^h(x) u(t,x)
 + 1 -
 \text{\small $\frac{1}{2}$} 
\left(\gamma - \alpha \langle \Phi^h u(t, \cdot), \nu_t  \rangle \right)
\partial^2 \phi^h(x),
\ee
where we will assume that $\nu_t$ has a density $p(t,\cdot)$ which satisfies
\begin{equation}
\partial_t p + \partial_x \left(a^h(x, \beta(t,x), \nu_t) p \right) = 
 \frac{1}{2} \partial_x^2 (\sigma^h(x)^2 p).
\end{equation}

\subsection{A proximal policy gradient method (PGM)}
\label{subsec:grad_desc}

We now compute a sequence of approximations to the optimal control in feedback form, namely $\beta^m_t = \beta^m(t,X^m_t)$.
Following \cite{RSZ22}, we will carry out proximal gradient steps with $\beta^0$ given, e.g.\ zero, and thereafter, for step size $\tau>0$,
\begin{align}\label{eq:phi_psi}
\begin{split}
 \beta^{m+1}(t,x)&= \text{prox}_{\tau g} \left(\beta^m(t,x)-\tau 
 (\nabla F)(\beta^{m})(t,x) \right),
 \end{split}
\end{align}
where
  $\prox_{\tau g}:\sR^k\to \sR^k$ is the proximal map of $\tau g:\sR\to \sR\cup\{\infty\}$ such that 
  $$\textrm{prox}_{\tau g}(b)=\arg\min_{z\in \sR}\left(\frac{1}{2}|z-b|^2+\tau g(z)\right),
  \quad a\in \sR,\tau>0.$$
  
  For the considered $g$, an indicator function,
  $\prox$ is simply the projection onto $[0,b_{\max}]$,
  i.e.\ $\textrm{prox}_{\tau g}(b)= \min(\max(b,0),b_{\max})$.

Then a sufficiently smooth decoupling field $u^m$ such that $Y^m_t= u^m(t,X_t^m)$ satisfies
\begin{equation}
\begin{split}\label{decoupling_pde_1}
&\partial_t u^m + \text{\small $\frac{1}{2}$} \, \partial_x\! \left(\Phi^h(x)^2 \partial_x u^m \right) + 
(\beta^m(t,x) +\text{\small $\frac{\alpha}{2}$} \langle  \partial \phi^h, \nu^m_t \rangle) \, \partial_x\!\left(\Phi^h(x)  u^m \right)
\\&= \text{\small $\frac{1}{2}$} 
\left(\gamma \!-\! \alpha \langle \Phi^h u^m(t, \cdot), \nu^m_t  \rangle \right)
\partial^2 \phi^h(x),
\end{split}
\end{equation}
where $\nu^m(dx) = p^m \, dx$ for the density $p^m$ 
that satisfies
\begin{equation}\label{density_pde}
\partial_t p^m + \partial_x \left(a^h(x,\beta^{m}(t,x), \nu^m) p^m \right) = 
\text{\small $\frac{1}{2}$}  \partial_x^2 \Phi^h(x)^2  p^m,
\end{equation}
and where
\[
a^h(x,\beta^{m}(t,x), \nu^m)  =\Phi^h(x)  \left(  \beta^{m}(t,x) + \text{\small $\frac{\alpha}{2}$} \int_{-\infty}^\infty \partial \phi^h(x) \, p^m(t,x) \, dx \right).
\]
Finally,
\bb\label{eq:F_gradient}
 {(\nabla F)(\beta^m)}(t,x)=
\Phi^h(x) u^m(t,x)
 + 1 -
 \text{\small $\frac{1}{2}$} 
\left(\gamma - \alpha \langle \Phi^h u^m(t, \cdot), \nu^m_t \rangle \right)
\partial^2 \phi^h(x).
\ee


\subsection{Numerical implementation}
\label{subsec:num_imp}

We pick regularisation parameters $h,\kappa >0$ for $\phi^h$ and $\Phi^h$ defined as above. Then in the $m$-th iteration, we first solve numerically \eqref{density_pde} for $p^m$ and then \eqref{decoupling_pde} for $u^m$,
where $\nu^m$ is the measure with density $p^m$.
We use a semi-implicit finite difference scheme on a non-uniform mesh, as detailed below.

We define a numerical approximation on a time mesh $t_i = i \Delta t$, $i\in \mathbb{I}=
\{0,1,\ldots, N\}$, $\Delta t = T/N$
for a positive integer $N$.

We also define a non-uniform spatial mesh $(x_j)_{j\in \mathbb{J}}$ with 
$\mathbb{J}= \{0,1,\ldots,J\}$, for $x_0 = x_{\min} <0 $, $x_J = x_{\max}>0$.

In the following, we drop the iteration index $m$ and use instead superscript $i$ to denote the timestep of any function defined on the space-time mesh
and subscript $j$ its spatial index, in particular, for the numerical PDE solutions,
$p_j^i\approx p(t_i,x_j)$, $u_j^i\approx u(t_i,x_j)$.
We assume a feedback control $b_j^i = \beta(t_i,x_j)$ is defined on this mesh.

Starting with the forward equation \eqref{density_pde},
for each $x_j$ and $t_i$, we approximate the drift coefficient $a$ by
\begin{eqnarray}
a_j^i = \Phi^h(x_j) \left(b_j^i - \alpha L^i \right)
\quad \text{ for } \quad
L^i = -
\text{\small $\frac{1}{2}$} \sum_{k=0}^{J-1} \partial \phi^h(x_k) \, p_k^{i-1} \, (x_{k+1}-x_k),
\label{num_loss}
\end{eqnarray}
and set $s_j^i = \Phi^h(x_j)^2$.
Then define a finite difference scheme by $p_j^0 = f(x_j)$, and for $i>0$,
\begin{eqnarray*}
\frac{p^i_j-p^{i-1}_j}{\Delta t} 
+ \frac{\max(a_{j}^i,0)  p_{j}^i - \max(a_{j-1}^i,0)  p_{j-1}^i}{x_{j}-x_{j-1}}
+ \frac{\min(a_{j+1}^i,0)  p_{j+1}^i - \min(a_{j}^i,0)   p_{j}^i}{x_{j+1}-x_{j}}
= && \\ 
\frac{1}{x_{j+1}-x_{j-1}} 
\left(\frac{s^i_{j+1}  p^i_{j+1} - s^i_{j}  p^i_{j}}{x_{j+1}-x_{j}} -
\frac{s^i_{j}  p^i_{j} - s^i_{j-1}  p^i_{j-1}}{x_{j}-x_{j-1}}
\right),
\qquad 0 < j < J,&& \\
p^i_j = 0, \qquad \text{else}.&&
\end{eqnarray*}
This is an upwind scheme for the first order terms, taking the appearance of $p$ in $a$ explicit, but otherwise implicit.
The form of the scheme is chosen to be consistent with \eqref{density_pde} for non-uniform meshes, in particular where the
mesh size is piecewise constant.

For the adjoint equation \eqref{decoupling_pde_R}, with $p_j^i$ now given in addition to $b_j^i$, we first define the right-hand side,
\begin{eqnarray}
\label{eqn:rhs_disc}
r_j^i = \text{\small $\frac{1}{2}$}  \left(\gamma - \alpha   \sum_{k=0}^{J-1} \partial \Phi^h(x_k) \, u^{i+1}_k p_k^{i} \, (x_{k+1}-x_k) \right) \partial^2 \phi^h(x_j),
\end{eqnarray}
and then, with $u_j^N = 0$, we define for $i<N$
\begin{eqnarray*}
\frac{u^{i+1}_j-u^{i}_j}{\Delta t} 
+ \min(a_{j}^i,0) \frac{u_{j}^i - u_{j-1}^i}{x_{j}-x_{j-1}}
+ \max(a_{j}^i,0) \frac{u_{j+1}^i - u_{j}^i}{x_{j+1}-x_{j}} 
- \qquad\qquad\qquad\qquad\qquad && \\ 
\frac{1}{x_{j+1}-x_{j-1}} 
\left(
s^i_{j+1/2} \frac{u^i_{j+1} -  u^i_{j}}{x_{j+1}-x_{j}}-
s^i_{j-1/2} \frac{u^i_{j} -  u^i_{j-1}}{x_{j}-x_{j-1}} 
\right) = r_j^i,
\qquad 0 < j < J,&& \\
u^i_j = 0, \qquad \text{else}.&&
\end{eqnarray*}

This allows us to compute the gradient on the same mesh, from \eqref{eq:F_gradient},
\begin{eqnarray*}
G_j^i = \Phi^h(x_j) u_j^i + 1 - r_j^i,
\end{eqnarray*}
and perform updates
$b_j^i \leftarrow \min(\max(b_j^i - \tau G_j^i,0),b_{\max}).$


Finally, \eqref{opt_loss} is approximated by $L$ in \eqref{num_loss} and
\eqref{opt_cost} by
\begin{eqnarray}
\label{num_cost}
\frac{T}{N} \sum_{i=1}^N \sum_{j_0}^{J-1} b_j^i \ p_j^i \ (x_{j+1}-x_j), \qquad j_0 = \min\{j: x_j>0\}.
\end{eqnarray}

Let us remark that we do not have a convergence proof for this numerical scheme and it also seems out of reach due to the delicate interplay between the discretization and regularization parameters visible from Table \ref{table:mesh_convergence}. Nevertheless, for fixed $N$ and $h$ we can empirically show  convergence of the gradient iterations (see Figure \ref{fig:PGM_conv}), which then allows us to compute approximate optimal policies. In this sense our numerical
tests indicate at least qualitatively how the optimal policies look like. 
Note that a rigorous convergence proof of a similar policy gradient iteration method in the non-mean field regime has recently been provided in \cite{reisinger2022linear}.

\subsubsection*{Set-up and model parameters}

In the rest of the paper, we give illustrations of the model's suggested strategies and resulting loss behaviour in different market scenarios,
influenced by the interaction parameter $\alpha$, the risk aversion $\gamma$, the initial state $f$, and maximum cash injection rate $b_{\max}$.

In all examples, we choose a gamma initial density,
\begin{eqnarray}
f(x) = 1/\Gamma(k) \theta^{-k} x^{k-1} {\rm e}^{-x/\theta}, \qquad  x\ge 0.
\label{gamma_dens}
\end{eqnarray}
The parameters of the initial distribution could be calibrated to CDS spreads if they are traded (see \cite{bujok2012numerical}).
The default parameters we use are $k=2$, $\theta=1/3$, chosen to give a range of different behaviours by varying the other parameters.
In this case, $f$ is differentiable with $f(0+)=0$.
This choice implies that there are smooth solutions for a short enough time interval (see \cite{HLS, DNS}).
It also implies  (see \cite[Theorem 1.1]{HLS}) that a blow-up (of the unregularised system) is guaranteed to happen at some time for $\alpha > 2 \mathbb{E}[X_{0-}] = 2 k\theta = 4/3$.
Conversely, it is known (see \cite[Theorem 2.2 and the comment below it]{LS:20})
that the condition
$\alpha \| f\|_{\infty} < 1$
leads to the so-called weak feedback regime, where continuity of solutions always holds true. 

A simple estimation of meaningful $\alpha$ from typical asset volatilities, recovery rates, and mutual lending as proportion of overall debt  is found in \cite{lipton2019semi},
suggesting possible values from 0.3 to possibly higher than 5. We shall conduct tests for $\alpha\in \{0.5, 1, 1.5\}$.
With $\| f\|_{\infty} \approx 1.1$, it is clear that a jump cannot occur for $\alpha=0.5$,
but is guaranteed for $\alpha = 1.5$ as then $2 \mathbb{E}[X_{0-}]<\alpha$.
The terminal time is chosen as $T=0.02$.
We find empirically that the uncontrolled system does not jump in this interval for $\alpha=1$ (although it may jump eventually), and does jump halfway through the interval for $\alpha=1.5$.
We have intentionally chosen an initial distribution where blow-ups can happen at such relatively short time scales to illustrate the different effects.
In our regularised version of the problem, this manifests in a smooth transition to high values of losses,
around 60\%, over a short period of time. 
We fix $b_{\max}=10$ at first, and investigate the effect of larger values later on. 

In the following, when not stated otherwise, we choose $\kappa = 1/10$ in the construction 
of $\Phi^h$ (see \ref{sec:reg} and Appendix \ref{subsec:smooth}), which was found a reasonable choice in our experiments.
As default, solutions are computed with $N=800$ timesteps and a non-uniform mesh on $[x_{\min},x_{\max}] = [-2,6]$
which is constructed as described below.

\subsubsection*{Mesh convergence}

We first analyse the convergence of the finite difference approximations for fixed control.
In particular, we first choose $\beta = 0$. The interaction parameter is $\alpha=0.5$.

The mesh is chosen uniformly in the intervals $[x_{\min},-0.02]$, $[-0.02,0.05]$, $[0.05,x_{\max}]$, such that approximately 5\% of the points lie in the first interval, 45\% in the second, and 50\% in the third, and the total number $N_x$ of spatial mesh points is approximately $N \cdot (x_{\max}-x_{\min})/(8 T)$. This has the effect that the average mesh size is roughly eight times the time step size, which turns out a reasonable ratio in our numerical tests.

It is of crucial importance to have enough mesh points in the intervals $[-\kappa h,0]$ and $[0,h]$ to approximate the smoothed Heaviside function and the smoothed delta distribution with its first two derivatives. A strong local mesh refinement as above allows this while keeping the total computational complexity feasible. Notice for our choice above the local mesh size around zero is almost 100 times smaller than for larger $x$.

In Table \ref{table:mesh_convergence}, we report for a varying number of time-steps $N$ (and proportionally chosen $N_x$) and smoothing parameter $h$ 
the computed loss (columns 5--10, rows 3--8).
Let $L_N^h$ be the loss computed with $N$ time steps and parameter $h$.
Then from the table we conjecture convergence of  $L_N^h$ as $N\rightarrow \infty$ for fixed $h$, but divergence as $h\rightarrow 0$ for fixed $N$.


\begin{table}[ht!]
\hspace{-0.6 cm}
\begin{tabular}{|r|r|c|c|c|c|c|c|c|c|c|}
\hline
&& & $10^3 \cdot h$ & $1$ & $2^{-1}$ & $2^{-2} $ & $2^{-3}$ & $2^{-4} $ & $2^{-5} $ & CPU \\ \hline \hline
$\frac{N}{10^2}$ & $N_x/10^3$ & $10^3 \cdot \theta_N$ & $\rho_N$ &&&&&&&   (s) \\  \hline  
$1$ & $3.75$ &1.956 & -2.42 &    0.5643  &  0.6430  &  0.7137  &  0.8324  &  4.7756  &       0 & 0.44 \\
$2$ & $7.5$ &-0.806 & 1.31 &    0.5663  &  0.6260  &  0.6693  &  0.7268  &  0.8384  &  5.1210 & 1.2\\
$4$ & $15 $ &-0.614 & 1.82 &    0.5655  &  0.6164  &  0.6481  &  0.6778  &  0.8223  &  
0.0336 & 4.2 \\
$8$ & $30$  &-0.337 & 1.94 &    0.5649  &  0.6118  &  0.6376  &  0.6589  &  0.6884  &  0.6096 & 17 \\
$16 $ & $60$ &-0.173 & --- &    0.5645  &  0.6096  &  0.6327  &  0.6486  &  0.6647  &  0.6893 & 83 \\
$32 $ & $120$ &--- & --- &    0.5643  &  0.6085  &  0.6304  &  0.6440  &  0.6548  &  0.6680 & 427 \\  \hline
&& &$10^2 \cdot \vartheta_h$ & 4.414  &  2.192  &  1.354  &  1.084  &   1.322 & --- & \\
\hline
&& &$\varrho_h$ & 2.01   & 1.61  &  1.24  &  0.81 & --- & --- & \\ \hline
\end{tabular}
\caption{Mesh convergence, losses, $\alpha=1.5$ and $\gamma=0.1$.
}
\label{table:mesh_convergence}
\end{table}

To investigate this more quantitatively, we report in the third and fourth columns $\theta_N = L_{2 N}^{h}-L_{N}^{h}$ and $\rho_N = \theta_N/\theta_{2 N}$, where $h = 10^{-3}$.
The fact that, for fixed $h$, the increments $\theta_N$ for successive mesh refinements decrease inversely proportional to $N$ is consistent with first order convergence in $1/N$ and $1/N_x$.
Conversely, we fix $N=3200$ and examine $\vartheta_h = L_{N}^{h/2}-L_{N}^{h}$ and $\varrho_h = \vartheta_{h}/\vartheta_{h/2}$ in the last two rows.
The behaviour indicates a decrease of first order  in $h$ as long as $1/N_x$ is small compared to $h$, but divergence thereafter.
Finally, the approximate computational times, reported in the last column, are approximately linear in $N N_x$ and independent of $h$.\footnote{Computations performed
using Matlab on a 2.8 GHz Intel Core i7 with 16 GB 1600 MHz DDR3.}

A similar behaviour is observed for the approximation of the cost and for different parameters, as shown in Appendix \ref{subsec:meshconv}.

\subsubsection*{Convergence of policy gradient iteration (PGM)}

Next, we analyse the convergence of the policy gradient iteration.
Here and thereafter, we will use a modification
whereby \eqref{eq:phi_psi} is evaluated for $x > h$, while $\beta^{m+1} = b_{\max}$ for $x\le h$.
As the occupation time of $[0,h]$ is small, the effect of this choice has a negligible effect on the expected cost in all cases.
We found that this modified iteration converged faster and more reliably in our numerical tests.

We monitor in each iteration the loss at time $T$ computed as in \eqref{num_loss}, and the expected cost, computed as in \eqref{num_cost}.
For $L^{(m)}$ and $C^{(m)}$ the terminal loss and total expected cost at the $m$-th iteration, respectively,
we plot in Figure \ref{fig:PGM_conv} the steps
$|L^{(m+1)} - L^{(m)}|$ and $|C^{(m+1)} - C^{(m)}|$. In these tests, the iteration terminates if either both of these quantities are smaller than $10^{-5}$ or 50 iterations are reached. 

The left-hand plot in Figure \ref{fig:PGM_diff_gamma} shows the convergence for different values of $\gamma \in \{10^{-3}, 10^{-2}, 10^{-1}\}$.
The intermediate value of $\gamma$ has the largest absolute error, while the smallest $\gamma$ leads to the smallest one. In the latter case, the cost is very small due to
the very small penalty of losses. The asymptotic rate of convergence appears similar for all parameters considered.

In Figure \ref{fig:PGM_diff_alpha}, we analyse the effect of $\alpha$ on the convergence. The error is largest for the smallest of
$\alpha \in \{0.5, 1, 1.5\}$, while the error is smallest for $\alpha=1.5$, which is the case where a jump occurs in an uncontrolled setting and losses are the largest.

\begin{figure}[t!]
    \centering
    \begin{subfigure}[t]{0.49\textwidth}
            \hspace{-1cm}
        \includegraphics[height=0.75\textwidth, width=1.15\textwidth]{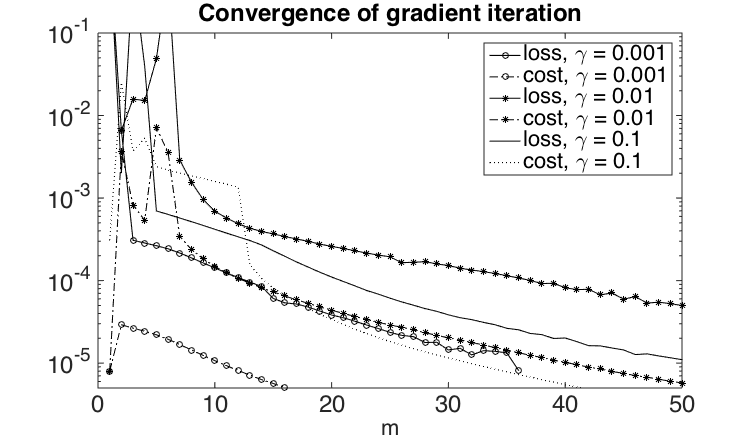}
        \caption{$\alpha=0.5$, varying $\gamma$, $N=800$}
        \label{fig:PGM_diff_gamma}
    \end{subfigure}%
    \hfill
    \begin{subfigure}[t]{0.49\textwidth}
        \hspace{-0.5cm}
        \includegraphics[height=0.75\textwidth, width=1.15\textwidth]{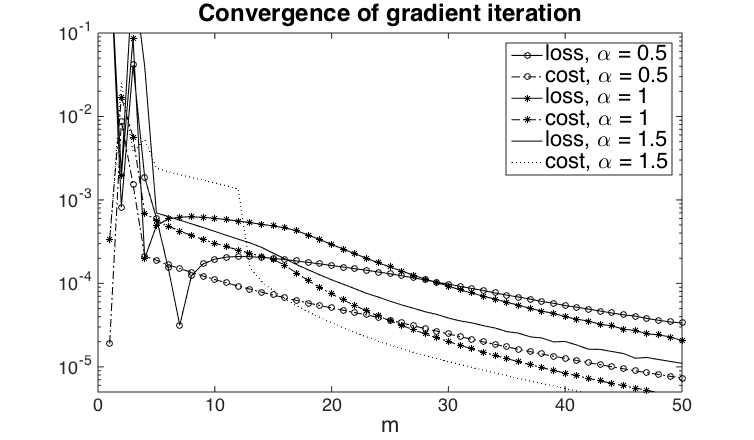}
        \caption{varying $\alpha$, $\gamma=1$, $N=800$}
       \label{fig:PGM_diff_alpha}
    \end{subfigure}
    \caption{Convergence of $C$ and $L$ in the PGM for varying $\gamma$ and $\alpha$. Shown are
    $|L^{(m+1)} - L^{(m)}|$ and $|C^{(m+1)} - C^{(m)}|$.}
    \label{fig:PGM_conv}
\end{figure}

Further parameter studies are given in the Appendix, where 
Figure \ref{subfig:PGM_N} establishes robustness of the convergence under mesh refinement and Figure \ref{subfig:PGM_tau} illustrates the effect of the step size.

In most situations, the number of iterations required for reasonable accuracy, i.e.\ a relative error below around $10^{-3}$ was between 10 and 30, so that for the chosen discretisation (with $N=800$ timesteps and mesh as chosen above) the computing time to solve the MFC problem was between 3 and 10 minutes on
the laptop as specified earlier.


\subsection{Computational analysis of central agent's strategy} \label{subsec:comp_anal}

We now move to an analysis of the optimal strategies, and the achievable pairs of costs and losses under the optimal and other strategies.

\subsubsection*{Analysis of the optimal strategy}

The policy gradient method produces directly an approximation to the optimal feedback control
$\beta^\star$. We found that an initialisation of the iteration with a function of the form
$\beta^0(t,x)=b_{\max}$ for $0<x<c$ and 0 elsewhere, for some $c>0$ large enough 
so that the support
of $\beta^0$ covers the support of $\beta^\star$, produces more regular controls for small iteration numbers than a zero initialisation. The following plots were produced with $c=0.2$
and a tolerance $10^{-5}$ in the loss and cost (compare Figure \ref{fig:PGM_diff_gamma}).

We depict in Figure \ref{fig:cont_gamma} contours of the optimal feedback control $\beta^\star(t,x)$ for different $\gamma$.
As expected from the form of the Hamilton-Jacobi-Bellmann equation, the control is close to a `bang-bang' structure, i.e.\ a piecewise constant function where the control always takes one of the two extreme values,i.e. either $0$ or $b_{\text{max}}$. The two regions are  separated by a narrow strip where the
control transitions continuously. We conjecture this to be an effect of the numerical procedure,
which is designed for Lipschitz continuous feedback controls.

The (yellow) shaded region closest to $x=0$ is where $\beta^\star(t,x) \ge 0.95 \ b_{\max}$,
i.e.\ the central agent subsidises firms closest to default at or close to the maximum rate.
The white region furthest from $x=0$ is where $\beta^\star(t,x) \le 0.05 \ b_{\max}$,
i.e.\ the central agent does not subsidise firms with high reserves.

For larger $\gamma$, here exemplified by $\gamma = 0.1$ in Figure \ref{fig:cont_gamma01}, the contribution of the loss to the objective is large enough for the central agent
to act for all $t$, for values in $x$ up to a decreasing curve in $t$. Close to the chosen end point, 
the effect of the control on the overall losses becomes negligible and does not justify the associated cost. In a sense this behaviour is an artifact of the finite observation interval.
\begin{figure}[t!]
    \begin{subfigure}[t]{0.32\textwidth}
            \hspace{-0.9cm}
        \includegraphics[height=0.75\textwidth, width=1.15\textwidth]{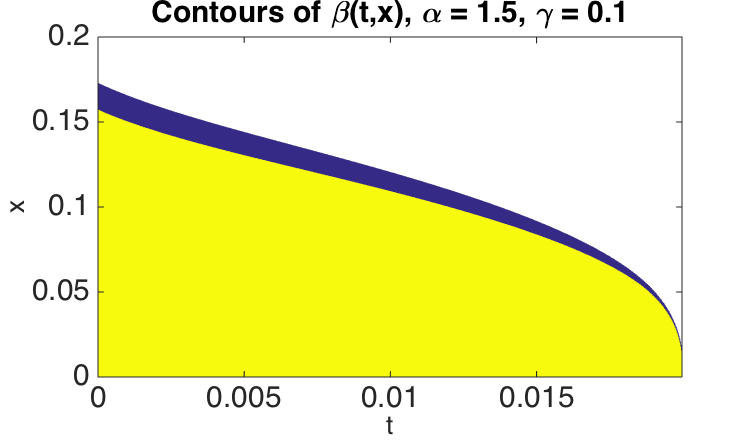}
        \caption{$\gamma=0.1$}
        \label{fig:cont_gamma01}
    \end{subfigure}
    \hfill
    \begin{subfigure}[t]{0.32\textwidth}
        \hspace{-0.9cm}
        \includegraphics[height=0.75\textwidth, width=1.15\textwidth]{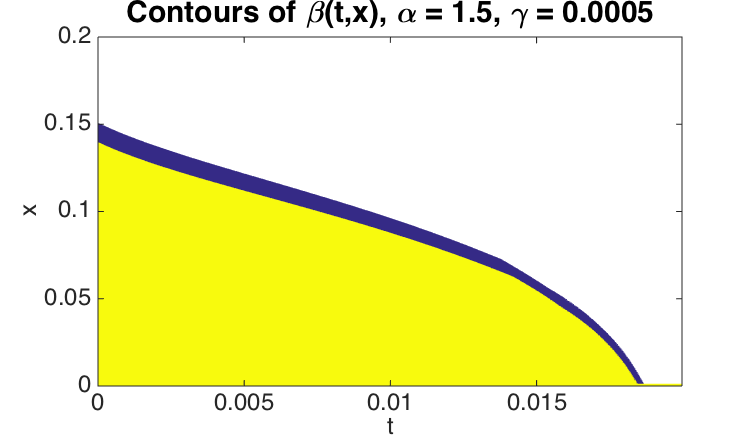}
        \caption{$\gamma=0.0005$}
       \label{fig:cont_gamma00005}
    \end{subfigure}
\hfill
    \begin{subfigure}[t]{0.32\textwidth}
        \hspace{-0.9cm}
        \includegraphics[height=0.75\textwidth, width=1.15\textwidth]{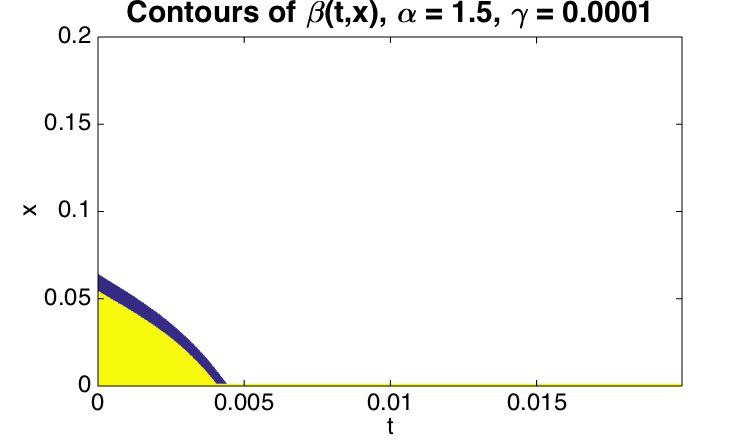}
        \caption{$\gamma=0.0001$}
       \label{fig:cont_gamma00001}
    \end{subfigure}     
    \caption{Contour plots of $(t,x) \rightarrow \beta^\star(t,x)$ for $\alpha=1.5$ and different $\gamma$.
    The white region is $\{\beta^\star\le 0.05 \ b_{\max}\}$, the (yellow) shaded region $\{\beta^\star \ge 0.95 \ b_{\max}\}$, the dark (blue) zone the transition.}
    \label{fig:cont_gamma}
\end{figure}

For smaller $\gamma$, in Figures \ref{fig:cont_gamma00005} and \ref{fig:cont_gamma00001}, the agent only acts (for sufficiently small states) up to a certain point in time 
and then does nothing.
Combining this with the plots of the resulting loss curves in Figure \ref{fig:loss_gamma}, a possible interpretation is that the agent seeks to delay the onset of the strongly contagious phase until it is no longer viable to do with a certain cost budget, depending on $\gamma$.
In particular, as visible from Figure \ref{fig:loss_gamma},
under the current optimization criterion
the jump is not avoided for $\gamma =0.001$.
We discuss at the end of the next subsection other strategies for avoiding jumps -- for the current optimisation criterion such a strategy is however not necessarily optimal.


\begin{figure}[t!]
\centering
\begin{minipage}{.49\textwidth}
  \hspace{-0.5 cm}
  \includegraphics[height= 0.8\linewidth]{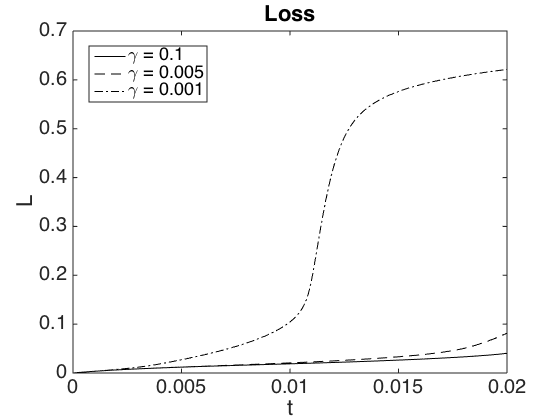}
  \captionof{figure}{Loss 
  for $\gamma \in \{0.1, 0.005, 0.001\}$.} 
  \label{fig:loss_gamma}
\end{minipage}%
\hfill
\begin{minipage}{.49\textwidth}
  \hspace{-0.2 cm}
  \includegraphics[height= 0.8\linewidth]{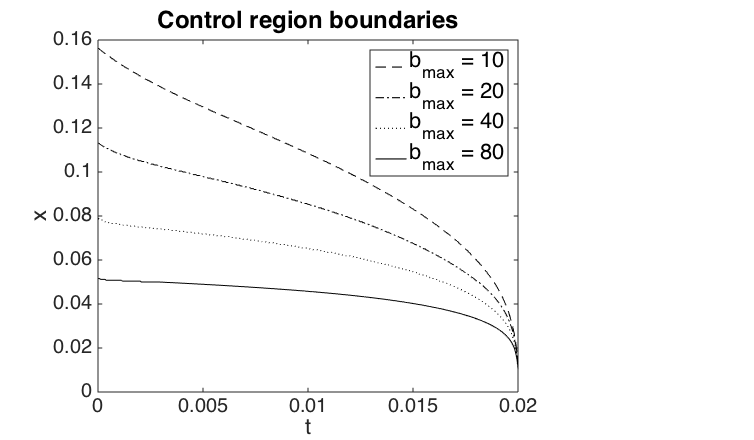}
  \captionof{figure}{Control regions for different $b_{\max}$.}
  \label{fig:control_bmax}
\end{minipage}
\end{figure}

Next, we analyse the impact the upper bound of $b_{\max}$ on the control strategy.
We show only the $0.99\ b_{\max}$ level set for clarity in
Figure \ref{fig:control_bmax}, for different $b_{\max}$.
The region under this curve indicates where the agent controls at (or close to) the maximum rate.
The region shrinks as $b_{\max}$ increases, meaning that the agent is able to control the banks' equity process more effectively whenever it gets close to zero.

Lastly in this section, we analyse the behaviour of the PDE solutions $p$ and $u$ at different times and on different scales in $x$ around 0.
Figure \ref{fig:density} shows in the left panel the accumulation of probability mass 
in the interval $[-c h,0]$ due to the smooth truncation of the SDE coefficients, approximating
the absorption at $x=0$. As can be deduced from the right plot in Figure \ref{fig:density}, the area under the density for positive $x$ is thus reduced, but only
by a small amount in the current parameter setting (see Figure \ref{fig:loss_gamma} for the corresponding loss function with $\gamma=0.1$).
\begin{figure}[t!]
    \centering
     \includegraphics[height=0.38\textwidth, width=0.51\textwidth]{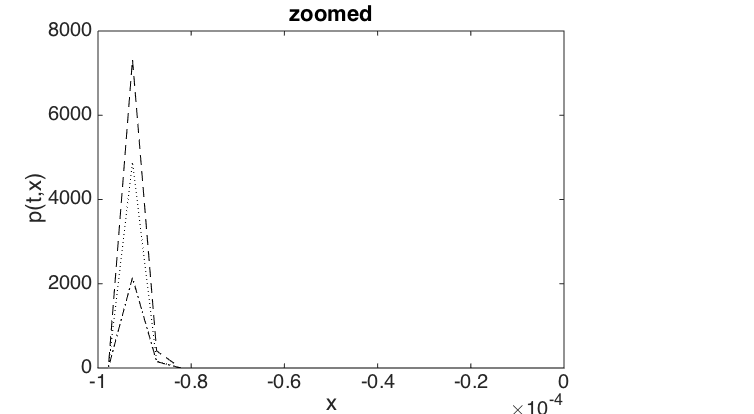}   \hspace{-1. cm}
        \includegraphics[height=0.38\textwidth, width=0.51\textwidth]{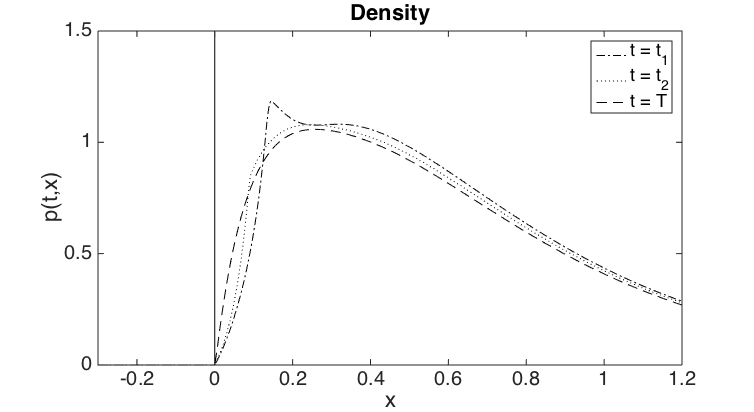}          
        \caption{Parameters $\alpha=1.5$, $\gamma = 0.1$. Left: Density $p(t,\cdot)$ for small negative $x$. Right: Density $p(t,\cdot)$ in macroscopic range.}
        \label{fig:density}
\end{figure}

In Figure \ref{fig:decoupling} we illustrate the behaviour of $u$ on different scales in $x$.
The left-most plot shows the range $[0,2 h]$, where $u$ attains large positive values;
in the middle plot, over $[0, 0.1]$, $u$ has moderate negative values;
the right-most plot is truncated below by $-1$, this being the threshold
which determines where the control is active.
This can be seen from \eqref{eq:F_gradient} in conjunction with \eqref{eq:phi_psi}:
for $x>h$, the gradient is $u+1$, so for a converged control we have
$\beta = b_{\max}$ where $u+1<0$ and $\beta = 0$ where $u+1>0$. From this the bang-bang structure of the control becomes also clear.

\begin{figure}[t!]
    \centering
    \includegraphics[height=0.38\textwidth, width=0.51\textwidth]{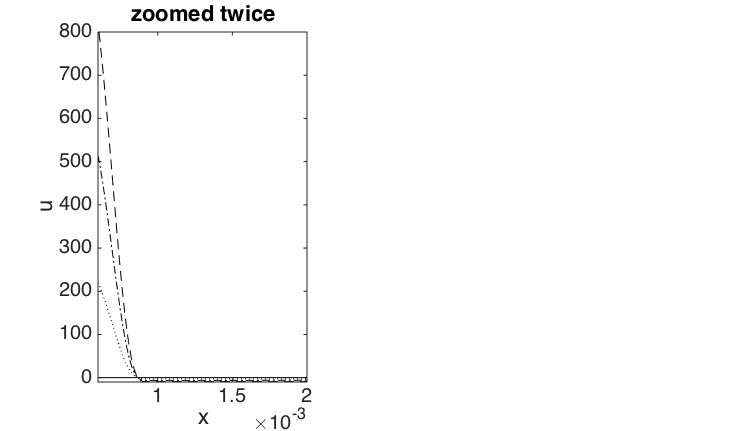} \hspace{-4.8 cm}
     \includegraphics[height=0.38\textwidth, width=0.51\textwidth]{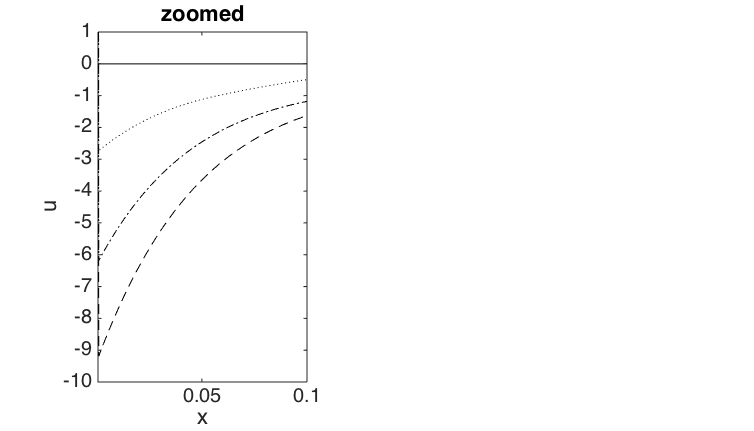}   \hspace{-4.8 cm}
        \includegraphics[height=0.38\textwidth, width=0.51\textwidth]{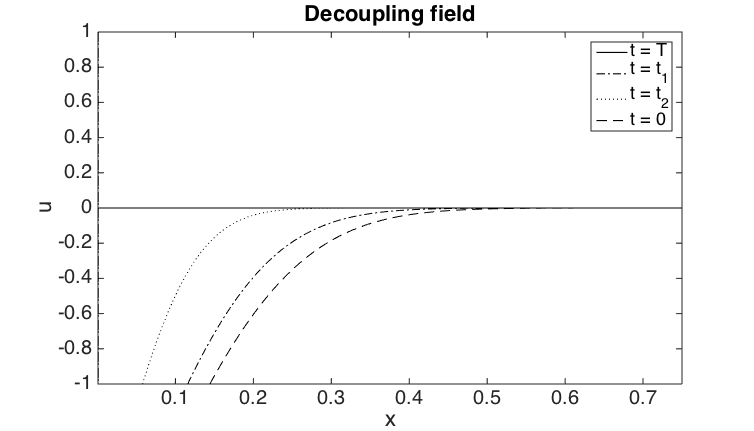}          
        \caption{Parameters $\alpha=1.5$, $\gamma = 0.0005$. Left and middle: Decoupling field $u(t,\cdot)$ for different $t$ and two ranges of (small) $x$. Right: Decoupling field $u(t,\cdot)$ for different $t$ and marcroscopic range.}
        \label{fig:decoupling}
\end{figure}

\subsubsection*{Analysis of optimal cost-loss pairs}


Finally, we examine the pairs of costs and losses
that are obtained under the optimal policy and other heuristic strategies.

In Figure \ref{fig:front_diff_alpha}, 
we vary $\gamma$ to trace out the curve $(C^\star_T(\gamma), L^\star_T(\gamma))$, where $C^\star_T(\gamma)$ and $L^\star_T(\gamma)$ are the costs and losses given by
\eqref{opt_loss} and \eqref{opt_cost} 
 for the chosen $\gamma$.
 For a given cost, the graph gives the loss achievable under the optimal strategy.
 To achieve a smaller loss, a higher cost is generally incurred.

We focus first on the data for $\alpha=0.5$ and $\alpha=1$. In these cases, the uncontrolled system exhibits no jumps and cash injection simply reduces the losses.
For small $\gamma$, minimising the cost is the priority and the losses approach those of the uncontrolled system.
For growing $\gamma$, it becomes favourable to increase the cash injection and a significant reduction of losses can be achieved. This levels off for large $\gamma$
as the cap $b_{\max}$ on the cash injection rate limits the overall effect of bail-outs.


\begin{figure}[t!]
    \centering
    \begin{subfigure}[t]{0.49\textwidth}
            \hspace{-1cm}
        \includegraphics[height=0.75\textwidth, width=1.15\textwidth]{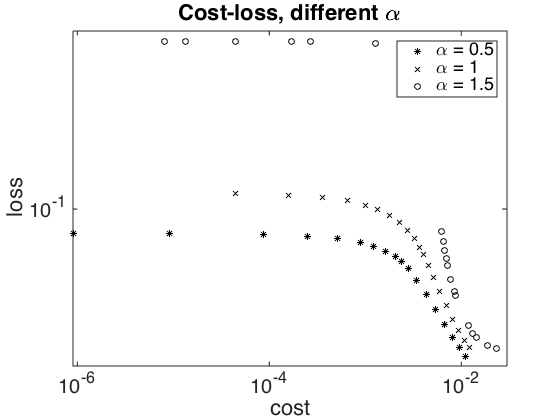}
        \caption{Pairs $(C^\star_T, L^\star_T)$ for different $\alpha$. }
        \label{fig:front_diff_alpha}
    \end{subfigure}%
    \hfill
    \begin{subfigure}[t]{0.49\textwidth}
        \hspace{-0.5cm}
        \includegraphics[height=0.75\textwidth, width=1.15\textwidth]{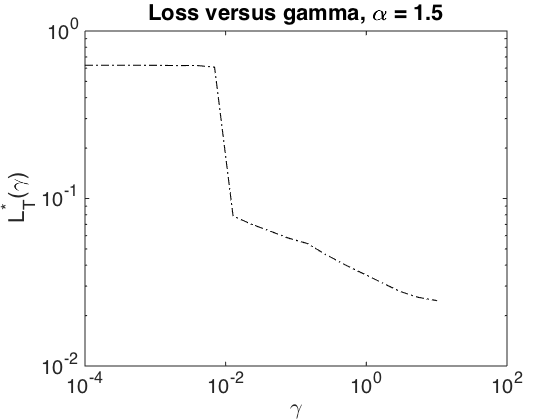}
        \caption{$L^\star_T$ as function of $\gamma$, $\alpha = 1.5$.}
       \label{fig:C_L_alpha_15}
    \end{subfigure}
    \caption{Cost $C^\star_T$ and loss $L^\star_T$ in the optimal regime for logarithmically spaced $\gamma \in [0.0001, 0.1]$ and different $\alpha$ in (a)
   and the dependence of the losses on $\gamma$ in (b).}
\end{figure}

For strong interaction, here exemplified by $\alpha=1.5$, we observe a discontinuity, which is further analysed in Figure \ref{fig:C_L_alpha_15}.
For $\gamma$ around 0.01, the optimal strategy switches from not preventing a jump to preventing a jump. This is manifested in Figure \ref{fig:C_L_alpha_15}
by an downward discontinuity  in the number of losses. 
The optimal value of the central agent's control problem is also discontinuous in $\gamma$ at this point.
In other words, it is not possible to vary the capital injection to control the \emph{size} of the jump continuously. Rather, the possible jump size is restricted
by the constraint \eqref{physical} on physical solutions. Conversely, withdrawal of a small amount of cash by the central agent from a scenario with low losses can trigger
a large systemic event.

Note that Figure \ref{fig:C_L_alpha_15} also allows to deduce the relation between $\gamma$ and the threshold $\delta$ by looking for $\gamma$ such that $\gamma \in 
\operatorname{argmax}_{\gamma \in \mathbb{R}_+} C^\star_T(\gamma) + \gamma (L^\star_T(\gamma) - \delta)$, as explained in the introduction. Indeed, this corresponds to solving the outer optimization problem
$\max_{\gamma \in \mathbb{R}_+} g(\gamma)$ where $g(\gamma)= \min_{\beta} \mathcal{L}(\beta, \gamma)$ with $\mathcal{L}(\beta, \gamma)$ denoting the Lagrange function.
Under the assumption of no duality gap and a unique optimizer $\gamma$, $\gamma$ is necessarily determined via $L^\star_T(\gamma) =\delta$. As we observe a jump discontinuity of $\gamma \mapsto L^\star_T(\gamma)$, this suggests that there is a duality gap at least for certain values of $\delta$.

We proceed by comparing the costs and losses under the optimal strategy with some other heuristic strategies. 
As first benchmark, we consider a uniform strategy by which the central agent
injects cash at a constant rate $b_{\max}$ whenever an agent's value $X_t\le c$ for a constant $c$, which we vary, resulting in pairs $(C_T^{\mathrm{u}}(c), L_T^{\mathrm{u}}(c))$. 
The total cost here can be computed as $C_T^{\mathrm{u}}(c) = b_{\max} \cdot \int_0^T \int_0^c p^{\mathrm{u}}(t,x) \, dx dt$,
where $p^{\mathrm{u}}$ is the density of the 
regularised
process with such uniform (in time) control.

We also consider a `front loaded' strategy whereby  at the outset, for some chosen `floor' $d>0$, the central agent injects a lump sum of $d-X_{0-}$ into all players with 
$X_{0-}<d$, hence lifting their reserves up to $d$. Again, we vary $d$ to obtain a parametrised curve $(C_T^{\mathrm{f}}(d), L_T^{\mathrm{f}}(d))$.
The total cost in this case is found as $C_T^{\mathrm{f}}(d) = \int_0^d (d-x) f(x) \, dx$.

The pairs of cost and loss are shown in Figure \ref{fig:compare_strats}. In particular, 
Figure \ref{fig:strats_a05} illustrates the case without jump for $\alpha=1$,
whereas in the situation of \ref{fig:strats_a15} with $\alpha=1.5$ there is a jump in the uncontrolled system, which can be avoided with sufficiently large control.
\begin{figure}[t!]
    \centering
    \begin{subfigure}[t]{0.49\textwidth}
            \hspace{-1cm}
        \includegraphics[height=0.77\textwidth, width=1.15\textwidth]{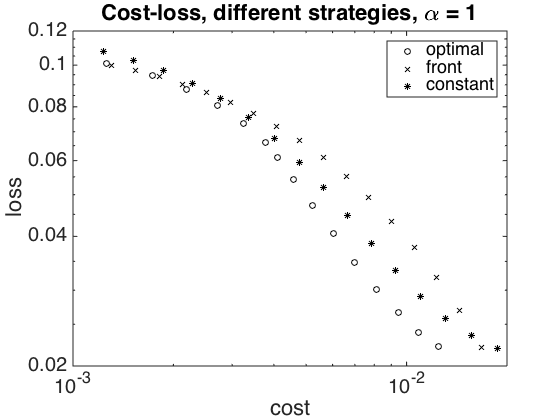}
        \caption{$\alpha=1$, no jumps.}
        \label{fig:strats_a05}
    \end{subfigure}
    \hfill
    \begin{subfigure}[t]{0.49\textwidth}
        \hspace{-0.5cm}
        \includegraphics[height=0.77\textwidth, width=1.15\textwidth]{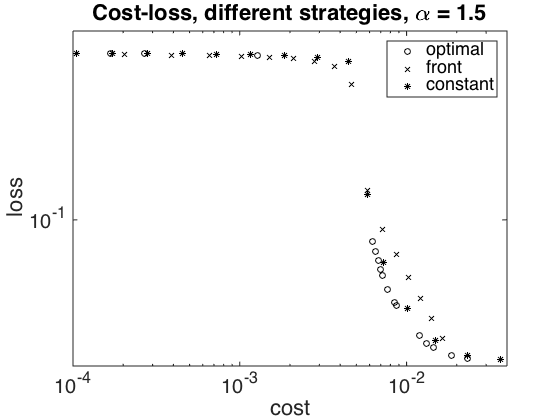}
        \caption{$\alpha=1.5$, jump possible.}
       \label{fig:strats_a15}
    \end{subfigure}
    \caption{Cost-loss pairs $(C^\star_T, L^\star_T)$ under optimal strategy compared to those for a constant strategy, $(C_T^{\mathrm{u}}, L_T^{\mathrm{u}})$, 
    and front-up strategy, $(C_T^{\mathrm{f}}, L_T^{\mathrm{f}})$, for
    two values of $\alpha$.}
    \label{fig:compare_strats}
\end{figure}
In both cases, the optimal strategy gives lower losses than the heuristic strategies for the same fixed cost.
Conversely, less cash injection is required for a given loss tolerance.\footnote{Note that the strategy with upfront payments is not in the class of Lipschitz feedback controls
for which the policy gradient method is designed.}

We observe that we cannot enforce the \emph{sufficient} condition for avoiding jumps, i.e.,  $\alpha \| f\|_{\infty} < 1$, for any of these strategies.
It is clear that the strategy where the initial capital of all banks is raised to a certain minimum level $d$ satisfies $\mathbb{E}[X_{0-}]\ge d$ and hence the \emph{necessary} condition for avoiding jumps, $\mathbb{E}[X_{0-}] \ge \alpha/2$, holds for $d \geq \alpha/2$.
However, the sufficient condition can be violated even when all banks have a high initial capital. What would work to enforce the sufficient condition is to set
$X_{0-} \sim U(d, d+\alpha+\varepsilon)$ for some $\varepsilon >0$, $d \geq 0$ and $U$ the uniform distribution on $[d,d+\alpha+\varepsilon]$.

Considering the physical jump condition \eqref{physical}, for a jump to occur it matters how much of the surviving mass can be concentrated around zero at any given point. Intuitively, starting with a higher initial condition, the Brownian motion will diffuse the mass sufficiently and make large concentrations at zero less likely, hence preventing a jump.
Similarly, sufficiently large $\beta(t,x)$ for small $t$ and $x$ should transport mass away from zero and prevent a jump as long as the initial density satisfies $f(0+)<1/\alpha$ (which rules out an instantaneous jump).
Therefore, both the constant and optimal strategies should be able to prevent jumps for large enough $b_{\max}$.
A rigorous analysis, however, goes beyond the scope of this paper.

\appendix

\section{Proofs}\label{sec:proofs}

\subsection{Notation}\label{sec:app_notation}
Throughout the paper, $D([-1,\infty))$ denotes the space of \cadlag functions on $[-1,\infty)$ endowed with the $M_1$-topology, $C([0,\infty))$ denotes the space of
continuous functions on $[0,\infty)$ endowed with the topology of compact convergence, i.e., $f_n \to f$ in $C([0,\infty))$ if and only if $f_{n}\vert_{K} \to f\vert_{K}$ uniformly
for every compact $K \subseteq [0,\infty)$. If $S$ is a Polish space, we denote the space of probability measures on $S$ by $\mathcal{P}(S)$ and endow it with the topology of weak convergece,
i.e., we say that $\mu_n \to \mu$ in $\mathcal{P}(S)$ iff $\int_{S} F(x) \mathrm{d}\mu_n (x) \to \int_{S}F(x) \mathrm{d}\mu (x)$ for all $F \in C_b (S;\mathbb{R})$.
If $\mu \in \mathcal{P}(S)$ and $F \colon S \rightarrow \mathbb{R}$, we denote the integral of $F$ with respect to $\mu$ also with brackets, i.e., we write 
$\int_{S}F(x)~\mathrm{d}\mu(x) = \langle \mu, F \rangle$. Furthermore, if $\nu$ is the pushforward of the measure $\mu$ with respect to the map $T$, we denote this by $T(\mu) = \nu$.

\subsection{Existence of minimal solutions and optimizers} \label{app:existence}

\begin{lemma}\label{thm:crossingproperty}
For any $\beta \in \mathcal{B}_T$, define the process
\begin{equation}
Z_t = X_{0-}+\int_{0}^{t}\beta_s\,ds + B_t, \quad t \geq 0.
\end{equation}
Then, the process $Z$ satisfies the extended crossing property, i.e.,
\begin{equation}
\mathbb{P}\left(\inf_{0\leq s\leq h} (Z_{\tau+s} - Z_{\tau}) = 0 \right) = 0, \quad h > 0,
\end{equation}
for any stopping time $\tau$ with respect to $(\mathcal{F}_{t})_{t \geq 0}$, the natural filtration
generated by $Z$.
\end{lemma}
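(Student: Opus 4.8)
The plan is to reduce the statement to a classical fluctuation property of Brownian motion at the origin, exploiting only the uniform bound $0\le\beta\le b_{\max}$ built into the definition of $S_T$. Fix an $(\mathcal{F}_t)$-stopping time $\tau$ (which we may take finite, otherwise we argue on $\{\tau<\infty\}$) and $h>0$. First I would write, for $s\ge 0$,
\[
Z_{\tau+s}-Z_\tau = \int_{\tau}^{\tau+s}\beta_u\,du + (B_{\tau+s}-B_\tau) = \int_0^s\beta_{\tau+u}\,du + W_s,
\]
where $W_s:=B_{\tau+s}-B_\tau$. Since $\beta$ is progressively measurable, $u\mapsto\beta_{\tau+u}$ is measurable and $s\mapsto\int_0^s\beta_{\tau+u}\,du$ is a well-defined, nondecreasing, finite process; and since $B$ is an $(\mathcal{F}_t)$-Brownian motion, the strong Markov property shows that $W=(W_s)_{s\ge0}$ is again a standard Brownian motion. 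Using $0\le\beta_u\le b_{\max}$ a.e., the drift term is sandwiched by $0\le\int_0^s\beta_{\tau+u}\,du\le b_{\max}s$, which yields
\[
W_s \;\le\; Z_{\tau+s}-Z_\tau \;\le\; b_{\max}\,s + W_s,\qquad s\ge0.
\]

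The key point is that only the upper bound matters: it gives $\inf_{0\le s\le h}(Z_{\tau+s}-Z_\tau)\le\inf_{0\le s\le h}(b_{\max}s+W_s)$, and it is classical that $\inf_{0\le s\le h}(b_{\max}s+W_s)<0$ almost surely. I would justify this via Blumenthal's zero--one law: the event $A=\bigcap_{n\ge1}\{\inf_{0<s\le 1/n}(b_{\max}s+W_s)<0\}$ (which is exactly the event that $b_{\max}s+W_s<0$ for arbitrarily small $s>0$) is a decreasing intersection of sets in $\mathcal{F}^W_{1/n}$, hence lies in the germ $\sigma$-field $\mathcal{F}^W_{0+}$ and has probability $0$ or $1$; and $\mathbb{P}(A)\ge\limsup_{s\downarrow0}\mathbb{P}(W_s<-b_{\max}s)=\limsup_{s\downarrow0}\Phi(-b_{\max}\sqrt s)=\tfrac12>0$, so $\mathbb{P}(A)=1$. (Equivalently one may simply quote the law of the iterated logarithm at $0$, which gives $\liminf_{s\downarrow0}W_s/\sqrt s=-\infty$.) On $A$, for every $h>0$ one finds some $s\in(0,h]$ with $b_{\max}s+W_s<0$, hence $\inf_{0\le s\le h}(b_{\max}s+W_s)<0$. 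Combining with the sandwich, $\inf_{0\le s\le h}(Z_{\tau+s}-Z_\tau)<0$ a.s., so in particular it is a.s. different from $0$; since $\tau$ and $h$ were arbitrary this is precisely the extended crossing property for $\xi=\operatorname{law}(Z)$.

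I do not expect a genuine obstacle here: the argument is the drift-controlled analogue of the crossing property used in \cite{CRR,DIRT}, and the only new ingredient is the uniform bound $b_{\max}$, which makes the (monotone) drift negligible compared with the $\sqrt s$-fluctuations of $W$ as $s\downarrow0$. The only points needing a little care are the observation that the lower bound $\beta\ge0$ plays no role whatsoever, and the routine measurability bookkeeping ensuring $\int_0^s\beta_{\tau+u}\,du$ is dominated by $b_{\max}s$; both are straightforward.
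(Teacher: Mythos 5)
Your argument is correct, but it takes a genuinely different route from the paper. The paper's proof is a change-of-measure argument: since $\beta$ is almost surely bounded (and vanishes after $T$), Novikov's condition and Girsanov's theorem give an equivalent measure $\mathbb{Q}$ under which $Z$ is an $(\mathcal{F}_t)$-Brownian motion; the strong Markov property then shows the infimum of the post-$\tau$ increments vanishes with $\mathbb{Q}$-probability zero, and equivalence of $\mathbb{P}$ and $\mathbb{Q}$ transfers this null set back. You instead work pathwise under $\mathbb{P}$: you write $Z_{\tau+s}-Z_\tau=\int_0^s\beta_{\tau+u}\,du+W_s$ with $W$ a Brownian motion (strong Markov at the finite stopping time), dominate the drift by $b_{\max}s$ using only the upper bound in $S_T$, and invoke Blumenthal's zero--one law (or the LIL at $0$) to get $\inf_{0\le s\le h}(b_{\max}s+W_s)<0$ almost surely, so the infimum of the increments is in fact strictly negative a.s.\ --- a slightly stronger conclusion than the lemma asks for, and obtained without any change of measure or independence considerations, since the sandwich is pathwise. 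The trade-off is scope: your argument leans on the one-sided bound $\beta\le b_{\max}$ (a signed or merely exponentially integrable drift would require a two-sided comparison or a different small-time estimate), whereas the paper's Girsanov argument needs only Novikov's condition and is indifferent to the sign of the drift, which keeps it reusable for variants such as the linear-quadratic setting where controls can be negative. Within the stated hypotheses ($\beta\in\mathcal{B}_T$, so $0\le\beta\le b_{\max}$ a.e.\ a.s.), both proofs are complete; your only bookkeeping points --- restricting to $\{\tau<\infty\}$ and the measurability of $u\mapsto\beta_{\tau+u}$ --- are handled adequately.
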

\begin{proof}
Let $\tau$ be a $(\mathcal{F}_t)_{t\geq 0}-$stopping time. Since $\beta \in \mathcal{B}_T$ is almost surely bounded, by Novikov's condition and Girsanov's theorem we may find an equivalent probability measure $\mathbb{Q}$ 
such that $Z$ is a $(\mathcal{F}_t)_{t \geq 0}$-Brownian motion under $\mathbb{Q}$. The strong Markov property of Brownian motion then yields
\begin{equation*}
\mathbb{Q}\left(\inf_{0\leq s\leq h} (Z_{\tau+s} - Z_{\tau}) = 0 \right) = \mathbb{Q}\left(\inf_{0\leq s\leq h} Z_s = 0 \right) = 0, \quad h > 0
\end{equation*}
and by the equivalence of $\mathbb{P}$ and $\mathbb{Q}$ the claim follows.
\end{proof}

\subsection{Existence of solutions}
\begin{lemma}\label{thm:integralsuniform}
Suppose that $f_n \to f$ in $S_T$. Then, $\int_{}^{\cdot} f_n(s)\,ds \to \int_{}^{\cdot} f(s)\,ds$ uniformly in $t$ 
on any compact subset of $[0,\infty)$.
\end{lemma}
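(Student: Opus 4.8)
The plan is to combine pointwise convergence of the primitives, which comes for free from weak $L^2$-convergence, with a uniform Lipschitz bound, which supplies equicontinuity, and then to invoke the classical fact that pointwise convergence together with equicontinuity upgrades to locally uniform convergence. Here $\int^{\cdot}$ is understood as $\int_0^{\cdot}$; set $F_n(t) := \int_0^t f_n(s)\,ds$ and $F(t) := \int_0^t f(s)\,ds$.

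First I would record the pointwise statement. For every fixed $t \geq 0$ the indicator $\ind{[0,t]}$ lies in $L^2([0,\infty))$, so by the very definition of the topology on $S$ (weak convergence in $L^2([0,\infty))$),
\[
F_n(t) = \langle f_n, \ind{[0,t]}\rangle_{L^2([0,\infty))} \;\longrightarrow\; \langle f, \ind{[0,t]}\rangle_{L^2([0,\infty))} = F(t).
\]
Second, since $0 \leq f_n \leq b_{\text{max}}$ a.e., each $F_n$ is Lipschitz with constant $b_{\text{max}}$, i.e.\ $|F_n(t) - F_n(t')| \leq b_{\text{max}}\,|t - t'|$ for all $t, t' \geq 0$, and likewise for $F$.

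Now fix a compact interval $[0,M]$ and $\varepsilon > 0$ (if $b_{\text{max}} = 0$ the claim is trivial, so assume $b_{\text{max}} > 0$). Choose a finite partition $0 = t_0 < t_1 < \dots < t_k = M$ of mesh smaller than $\varepsilon/(3 b_{\text{max}})$. By the pointwise convergence there is $N$, independent of $t$, such that $|F_n(t_j) - F(t_j)| < \varepsilon/3$ for all $j = 0,\dots,k$ and all $n \geq N$. For an arbitrary $t \in [0,M]$, pick $t_j$ with $|t - t_j| < \varepsilon/(3 b_{\text{max}})$; then for $n \geq N$,
\[
|F_n(t) - F(t)| \leq |F_n(t) - F_n(t_j)| + |F_n(t_j) - F(t_j)| + |F(t_j) - F(t)| < \tfrac{\varepsilon}{3} + \tfrac{\varepsilon}{3} + \tfrac{\varepsilon}{3} = \varepsilon.
\]
Taking the supremum over $t \in [0,M]$ gives $\sup_{t \in [0,M]} |F_n(t) - F(t)| \to 0$, and since $M$ was arbitrary this is the asserted locally uniform convergence. (Equivalently, one may note that $(F_n)$ is uniformly bounded — by $b_{\text{max}} T$, as all $f_n$ vanish on $(T,\infty)$ — and equi-Lipschitz, hence relatively compact in $C([0,\infty))$ by Arzel\`a--Ascoli, while the pointwise limit pins down $F$ as the only possible limit point.)

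There is essentially no obstacle here: the single point requiring (minor) care is simply that weak $L^2$-convergence is exactly what is needed to test against the step functions $\ind{[0,t]}$, and that the pointwise bound $b_{\text{max}}$ is what converts pointwise into locally uniform convergence.
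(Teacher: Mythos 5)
Your proof is correct and follows essentially the same route as the paper's: weak $L^2$-convergence tested against indicators gives pointwise convergence of the primitives, and a finite partition plus a three-term estimate upgrades this to uniform convergence on compacts. The only (immaterial) difference is how the oscillation between partition points is controlled: you use the equi-Lipschitz bound $b_{\text{max}}$ on all primitives, while the paper uses monotonicity of the primitives (nonnegativity of the $f_n$) together with a partition chosen so that $\int_{t_i}^{t_{i+1}} f(s)\,ds < \varepsilon/2$, which would even work without a uniform upper bound on the $f_n$.
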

\begin{proof}
Let $T' > 0$. By weak $L^2([0,\infty))$-convergence, we have $\int_{0}^{t} f_n(s)\,ds \to \int_{0}^{t} f(s)\,ds$ for any $t \in [0,T']$.
Let $\epsilon > 0$ and choose $t_k$ with $0 = t_0 \leq t_1 \leq \dots \leq t_m = T'$ such that 
\begin{equation}\int_{t_i}^{t_{i+1}} f(s)\,ds < \epsilon/2, \quad i \in \{0,\dots,m-1\}.
\end{equation}
Choose $n$ large enough such that $\left|\int_{0}^{t_i} f_n(s)-f(s)\,ds\right| < \epsilon/2$ for all $i \in \{0,\dots,m\}$. We obtain, for $t \in [t_i,t_{i+1}]$, 
\begin{align*}
&\int_{0}^{t} f_n(s)\,ds - \int_{0}^{t} f(s)\,ds \leq \int_{0}^{t_{i+1}} f_n(s)\,ds - \int_{0}^{t_{i+1}} f(s)\,ds + \epsilon/2 \leq \epsilon, \\
&\int_{0}^{t} f_n(s)\,ds - \int_{0}^{t} f(s)\,ds \geq \int_{0}^{t_{i}} f_n(s)\,ds - \int_{0}^{t_{i}} f(s)\,ds - \epsilon/2 \geq -\epsilon, 
\end{align*}
which yields the claim.
\end{proof}

\begin{proof}[Proof of Theorem \ref{thm:lambdaconvergence}]. Step 1: We construct the reference probability space $\mathscr{S}$. 

Since the sequence $\law((X_{0-}^n,B^n))$ is constant and the space $[0,\infty) \times C([0,\infty))$ (endowed with the product topology of Euclidean and uniform topology) is Polish, the sequence $(X_{0-}^n,B^n)$ is tight on $[0,\infty) \times C([0,\infty))$. Since $S_T$ is compact, the sequence $(X_{0-}^n,B^n,\beta^n)$ is tight on $[0,\infty) \times C([0,\infty))\times S_T$. By Prokhorov's theorem, after passing to a subsequence if necessary, we may assume that $\law((X_{0-}^n,B^n,\beta^n)) \to \law((X_{0-},B,\beta))$ in $\mathcal{P}([0,\infty) \times C([0,\infty)) \times S_T)$.
By the Skorokhod representation theorem, we may without loss of generality assume that $(X_{0-}^n,B^n,\beta^n) \to (X_{0-},B,\beta)$ holds almost surely on some probability space, which we denote by $\mathscr{S} = (\Omega, \mathcal{F}, \mathbb{P})$. Note that this is 
possible since the property of $\Lambda$ solving \eqref{eq::controlled}-\eqref{eq::controlled2} only 
depends on the (joint) law of $(X_{0-},B,\beta)$. We then define $\mathcal{F}_t$ to be $\sigma(\{(X_{0-},B_s,\int_{0}^{s} \beta_u \,ds), s \leq t \})$ for $t \geq 0$. Since $\beta$ is measurable and $(\mathcal{F}_t)_{t \geq 0}$-adapted, it admits a progressively measurable modification, which we again denote by $\beta$, and we see that $\beta \in \mathcal{B}_T(\mathscr{S})$. We next show that $B$ is a $(\mathcal{F}_t)_{t \geq 0}$-Brownian motion: by the continuous mapping theorem and the continuity of the coordinate projections, it follows that $\law(B)$ is the Wiener measure. Let $0 < r_1 < \dots < r_k < s < t$ and let $f_i \in C_b(\mathbb{R}^2;\mathbb{R}), i=1,\dots,k$ and $ g \in C_b(\mathbb{R};\mathbb{R})$ and set $F(w^1,w^2) := \prod_{i=1}^{k} f_i (w_{r_i}^1,w_{r_i}^2)$, then we have by dominated convergence
\begin{align*}
\mathbb{E} \left[F\left(B,\int_{0}^\cdot \beta_u \,du\right)g(B_t - B_s) \right] &= 
\lim_{n\to\infty}\mathbb{E} \left[F\left(B^n,\int_{0}^\cdot \beta_u^n\,du\right)g(B_t^n - B_s^n) \right] \\
&= \lim_{n\to\infty}\mathbb{E} \left[F\left(B^n,\int_{0}^\cdot \beta_u^n\,du\right)\right] \mathbb{E} \left[g(B_t^n - B_s^n) \right] \\
 &= \mathbb{E}\left[F\left(B,\int_{0}^\cdot \beta_u\,du\right)\right] \mathbb{E} \left[g(B_t - B_s) \right].
\end{align*}
Since the Borel $\sigma$-algebra on $C([0,\infty))$ is generated by the evaluation mappings, we obtain that $B_t - B_s$ is independent of $\mathcal{F}_s$ for $0 < s < t$, and therefore $B$ is an $(\mathcal{F}_t)_{t \geq 0}$-Brownian motion. We have shown that $\mathscr{S}$ is an admissible reference space.\\

Step 2: Since $M$ is compact, after passing to
subsequences if necessary, we may assume that $\frac{1}{\alpha} \Lambda^n \to \frac{1}{\alpha}\Lambda$ in $M$. We now show that $(X_{0-},B,\beta,\Lambda)$ solves \eqref{eq::controlled}-\eqref{eq::controlled2} on $\mathscr{S}$. Let $\ExtendedE = C([0,\infty)) \times M$ be endowed with the product topology and define $Z_t^n:=X_{0-}^n + B_t^n + \int_{0}^{t} \beta_s^{n}\,ds $ and $Z$ analogously as $Z_t = X_{0-} + B_t + \int_{0}^{t} \beta_s\,ds$. By Lemma \ref{thm:integralsuniform}, it follows that $Z^n\to Z$ in $C([0,\infty))$ almost surely. Since $\Lambda$ is deterministic, it follows that
$\xi^n := (Z^n,\frac{1}{\alpha}\Lambda^n) \to (Z,\frac{1}{\alpha}\Lambda) =: \xi$ in distribution on $\ExtendedE$. We introduce some notation: define $\iota \colon \ExtendedE \rightarrow D([-1,\infty))$ for $w \in C([0,\infty))$ and $\ell \in M$ as 
\begin{align}\label{eq::iotadef}
\iota(w,\ell)_t := \begin{cases} 
w_0, \quad & t \in [-1,0), \\
w_t - \alpha\ell_t, \quad & t \in [0,\infty).
\end{cases}
\end{align}
For $t \in \R$ and $x\in D([-1,\infty))$, define the path functionals
$\tau_0 (x) := \inf\{s \geq 0: x_s \leq 0\}$ and $\ellfunc_t (x) := \ind{\{\tau_0 (x) \leq t\}}$. Then, $(X_{0-},B,\beta,\Lambda)$ is a solution to \eqref{eq::controlled}-\eqref{eq::controlled2} on $\mathscr{S}$ if and only if $\alpha\mathbb{P}(\tau_0 (Z - \Lambda) \leq t) = \Lambda_t$. We may write this condition equivalently on the canonical
path space $D([-1,\infty))$ as
$\alpha\langle\iota(\xi),\ellfunc_t \rangle = \Lambda_t$ for $t \geq 0$. Note that with the notational conventions explained in Section \ref{sec:app_notation}, $\iota(\xi)$ denotes the pushforward of the measure $\xi$ by the map
$\iota$ and $\langle\iota(\xi),\ellfunc_t \rangle$ denotes the integral of the functional $\ellfunc_t$ with respect to $\iota(\xi)$. Since $Z$ satisfies the extended crossing property by Lemma \ref{thm:crossingproperty}, $\iota(\xi)$ satisfies the crossing property (cf the proof of Lemma 5.5 in \cite{CRS}). Lemma 5.3 in \cite{CRS} and Step 1 imply that
\begin{equation}
\Lambda = \lim_{n\to\infty}\Lambda^n = \lim_{n\to\infty} \alpha\langle\iota(\xi^n),\lambda \rangle = \alpha\langle\iota(\xi),\lambda \rangle. 
\end{equation}
in $M$.
\end{proof}

\subsection{Proof of Theorem \ref{thm:finitedimconvergence}} \label{app::finitedimconvergence}

\begin{proof}
The proof is to a large extent analogous to the proof of Proposition 5.6 in \cite{CRS}. Define
\begin{equation*}
\EmpMext_N := \frac{1}{N} \sum_{i=1}^{N} \delta_{(X_{0-}^{i,N} + B^{i,N},\beta^{i,N},L^N)}.
\end{equation*}
Since $\EmpMext_N$ is a random probability measure on $C([0,\infty)) \times S_T \times M$ and the spaces $S_T$ and $M$ are compact, 
$\EmpMext_N$ is tight by the same reasoning as in Corollary 4.5 in \cite{CRS}. Therefore, after passing to a subsequence if necessary,
we may assume that $\law(\EmpMext_N) \to \law(\EmpMext)$ for some random probability measure $\EmpMext$ on $C([0,\infty)) \times S_T \times M.$
By Skorokhod representation, we may assume without loss of generality that the convergence happens almost surely on the same probability space $\mathscr{S}$.
Arguing in the same fashion as in the proof of Lemma 5.4 in \cite{CRS}, we see that for almost every $\omega \in \Omega$, if $\law((W,\beta,\mathsf{L})) = \EmpMext(\omega)$, 
then $W-W_{0}$ is a Brownian motion with respect to the filtration generated by $(W,\int_{0}^{\cdot}\beta_s\,ds,\mathsf{L})$. For $(w,b,\ell) \in C([0,\infty)) \times S_T \times M$, set
$\embedding(w,b,\ell)_t := \iota(w,\ell)_t + \int_{0}^{t} b_s\,ds$, where $\iota$ is defined as in \eqref{eq::iotadef}. By Lemma \ref{thm:integralsuniform}, the map $b \mapsto \int_{0}^{\cdot} b_s\,ds$ is continuous from $S_T$ to $C([0,\infty))$, and therefore also as a map from $S_T$ to $D([-1,\infty))$. Theorem 4.2 in \cite{CRS} together with Corollary 12.7.4 in \cite{Whi} then shows that $\embedding$ is continuous. Since $\embedding(\EmpMext_N) = \EmpM_N$, the continuous mapping theorem implies that $\embedding(\EmpMext) = \mu$. Applying the continuous mapping theorem to $(w,b,\ell) \mapsto w_0$, we see that $\law(W_0) = \nu_0$ holds almost surely. It remains to check that if $\law(W,\beta,\mathsf{L}) = \EmpMext(\omega)$, then $\mathsf{L}_t \equiv \langle \mu(\omega),\ellfunc_t \rangle$ for $t \geq 0$ holds $\EmpMext(\omega)$-almost surely for almost every $\omega \in \Omega$. This can be checked as in Step 1 of the proof of Proposition 5.6 in \cite{CRS}, making use of Lemma \ref{thm:crossingproperty} to show that $\mu(\omega)$ satisfies the crossing property (almost surely).
\end{proof}

\section{Further numerical details and tests}

We here report further details and tests of our numerical procedure.

\subsection{Smoothing}
\label{subsec:smooth}

Let us start by precisely specifying the smoothing functions $\phi^h$ and $\Phi^h$ used in the regularization procedure of the objective function and the dynamics. 
We choose  $\phi^h(x) = \phi(x)/h$, where,
\begin{eqnarray}
\phi(x) = \left\{\begin{array}{rl}
\frac{1}{\mathcal{I}} \exp(-1/(x (1-x)), & x \in [0,1], \\
0, & \text{else}.
\end{array}
\right.
\end{eqnarray}
where $\mathcal{I} = 0.007029858406609$ normalises the integral (close) to 1.
The function $\phi^h$ and its first two derivatives are shown in Figure \ref{fig:phi_der}
for $h=10^{-3}$. 
Note in particular the large positive and negative values of $\partial^2 \phi^h$.

\begin{figure}[t!]
    \begin{subfigure}[t]{0.32\textwidth}
            \hspace{-0.9cm}
        \includegraphics[height=0.75\textwidth, width=1.15\textwidth]{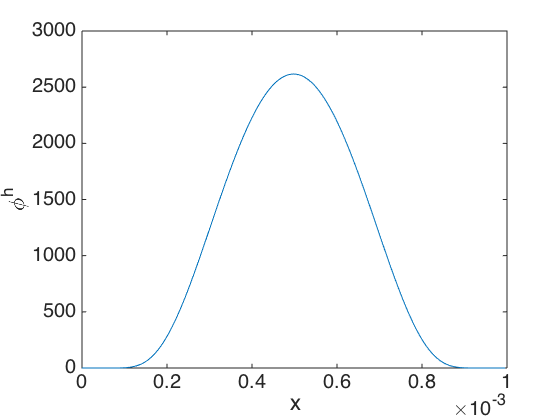}
        \caption{$\phi^h$}
        \label{fig:phi}
    \end{subfigure}
    \hfill
    \begin{subfigure}[t]{0.32\textwidth}
        \hspace{-0.9cm}
        \includegraphics[height=0.75\textwidth, width=1.15\textwidth]{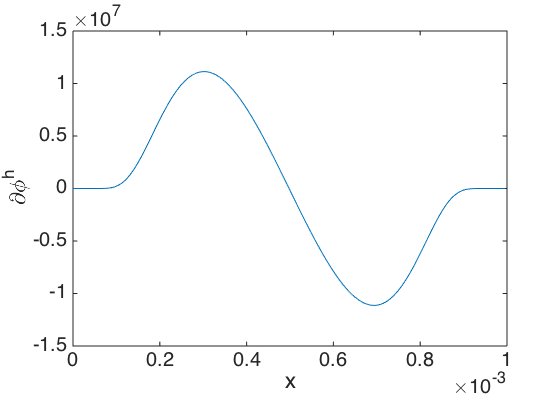}
        \caption{$\partial \phi^h$}
       \label{fig:d_phi}
    \end{subfigure}
\hfill
    \begin{subfigure}[t]{0.32\textwidth}
        \hspace{-0.9cm}
        \includegraphics[height=0.75\textwidth, width=1.15\textwidth]{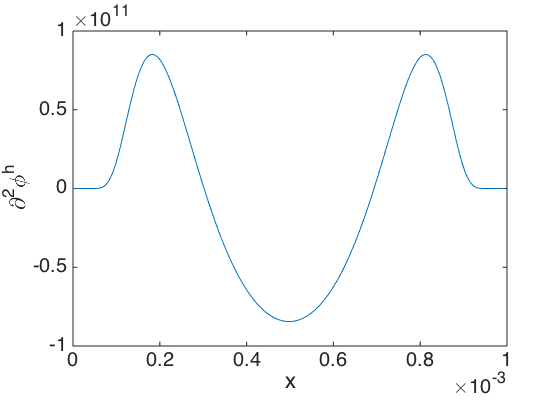}
        \caption{$\partial^2 \phi^h$}
       \label{fig:d2_phi}
    \end{subfigure}     
    \caption{Smoothed Dirac delta and its derivatives, for $h=10^{-3}$.}
    \label{fig:phi_der}
\end{figure}

We also choose for some $\kappa>0$, $\Phi^h(x) = \Phi(x/(\kappa h))$, where $\Phi(x) = \mathcal{N}((x+1/2)/(x*(1+x)))$,
$x\in (-1,0)$, and 0 else,
with $\mathcal{N}$ the standard normal CDF.

\subsection{Mesh convergence}
\label{subsec:meshconv}

Here, we demonstrate the convergence of the finite difference approximation in two more
settings. In Table \ref{table:mesh_convergence_cost} for the cost, for $\alpha = 1.5$, and in Table 
\ref{table:mesh_convergence_cost} again for the losses, but now for $\alpha = 0.5$. The notation with $\theta_N$
$\rho_N$, $\vartheta_N$ and $\varrho_h$ is analogous as before.

\begin{table}[ht!]
\hspace{-0.6 cm}
\begin{tabular}{|r|r|c|c|c|c|c|c|c|c|}
\hline
&& & $10^3 \cdot h$ & $1$ & $2^{-1}$ & $2^{-2} $ & $2^{-3}$ & $2^{-4} $ & $2^{-5} $ \\ \hline \hline
$\frac{N}{10^2}$ & $N_x/10^3$ & $10^3 \cdot \theta_N$ & $\rho_N$ &&&&&&\\  \hline  
$1$ & $3.75$ &-1.650 & 1.86 &  1.2548  &  1.1449  &  1.0225  &  0.8010  &  0.0526  &  0.5638  \\
$2$ & $7.5$ &-0.887 & 0.83 & 1.2383  &  1.1115  &  1.0085  &  0.9004  &  0.6965  &  0.0343 \\
$4$ & $15 $ &-1.066 & 1.99 &  1.2294  &  1.0970  &  0.9939  &  0.9246  &  0.6131  &  0.4506  \\
$8$ & $30$  &-0.534 & 2.00 &  1.2187  &  1.0846  &  0.9839  &  0.9126  &  0.8484  &  1.0968  \\
$16 $ & $60$ &-0.267 & --- & 1.2134  &  1.0784  &  0.9790  &  0.9126  &  0.8644  &  0.8199  \\
$32 $ & $120$ &--- & --- &  1.2107  &  1.0753  &  0.9764  &  0.9123  &  0.8701  &  0.8365  \\  \hline
&& &$10^3 \cdot \vartheta_h$ & -1.354  & -0.989 &  -0.641 &  -0.422  & -0.335 & ---  \\
\hline
&& &$\varrho_h$ & 1.37   & 1.54  &  1.51 &   1.25 & --- & ---  \\ \hline
\end{tabular}
\caption{Mesh convergence, 100 $\cdot$ costs, $\alpha=1.5$ and $\gamma=0.1$.
}
\label{table:mesh_convergence_cost}
\end{table}

\begin{table}[ht!]
\hspace{-0.6 cm}
\begin{tabular}{|r|r|c|c|c|c|c|c|c|c|}
\hline
&& & $10^3 \cdot h$ & $1$ & $2^{-1}$ & $2^{-2} $ & $2^{-3}$ & $2^{-4} $ & $2^{-5} $ \\ \hline \hline
$\frac{N}{10^2}$ & $N_x/10^3$ & $10^3 \cdot \theta_N$ & $\rho_N$ &&&&&& \\  \hline  
$1$ & $3.75$ &3.802 & 1.99 &  7.3549  &  7.4198  &  7.6336  &  8.7134  &  735.42   &      0  \\
$2$ & $7.5$ &1.910 & 2.03 & 7.3929  &  7.4573  &  7.4919  &  7.6925  &  8.7711  & 735.91 \\
$4$ & $15 $ &0.941 & 2.00 & 7.4120  &  7.4771  &  7.5103  &  7.4765  &  9.5999  & -4.196 \\
$8$ & $30$  &0.470 & 2.00 & 7.4214  &  7.4868  &  7.5203  &  7.5373  &  7.5867  &  6.2291  \\
$16 $ & $60$ &0.235 & --- & 7.4261  &  7.4916  &  7.5251  &  7.5421  &  7.5504  &  7.5550  \\
$32 $ & $120$ &--- & --- &  7.4285  &  7.4939  &  7.5275  &  7.5445  &  7.5531  &  7.5571  \\  \hline
&& &$10^3 \cdot \vartheta_h$ & 0.6547  &  0.3353  &  0.1701  &  0.0860  &  0.0398 & ---  \\
\hline
&& &$\varrho_h$ & 1.95  &  1.97 &   1.97  &  2.15 & --- & ---  \\ \hline
\end{tabular}
\caption{Mesh convergence, 100 $\cdot$ losses, $\alpha=0.5$ and $\gamma=0.1$.
}
\label{table:mesh_convergence_loss}
\end{table}

In these settings, the behaviour in $h$ is somewhat better than in Table \ref{table:mesh_convergence}
(losses for $\alpha = 1.5$, i.e.\ with jump in the uncontrolled case), but as there,
a good approximation is only achieved if the mesh size is sufficiently small in comparison with $h$.

\subsection{Gradient iteration}

Finally we conducted further tests in view of the mesh refinement and the role of the step size $\tau$ in the gradient iteration. 

Figure \ref{subfig:PGM_N}, left, illustrates that the convergence is robust with respect to mesh refinement, i.e., the number of iterations required
for a prescribed accuracy does not increase significantly as the number of time steps and mesh points increases simultaneously.

In Figure \ref{subfig:PGM_tau} we investigate the effect of the step size $\tau$. Choosing $\tau$ small leads to poor convergence, while $\tau=0.3$ is optimal among the
values presented here. Picking even larger step sizes can lead to divergence.

\begin{figure}[t!]
    \centering
        \begin{subfigure}[t]{0.49\textwidth} 
        \includegraphics[height=0.78\textwidth, width=0.98\textwidth]{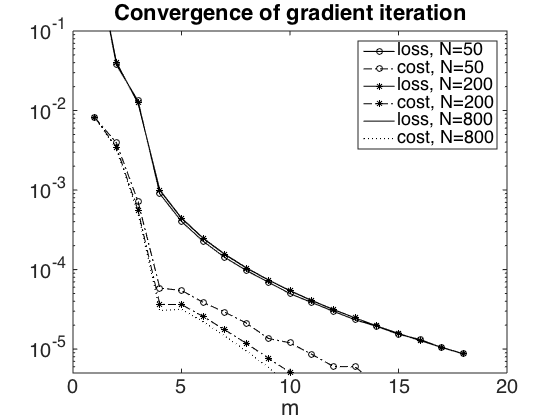}
        \caption{$\alpha=0.5$, $\gamma=1$, varying $N$}
        \label{subfig:PGM_N}
        \end{subfigure}
        \hfill
        \begin{subfigure}[t]{0.49\textwidth} 
              \includegraphics[height=0.78\textwidth, width=0.98\textwidth]{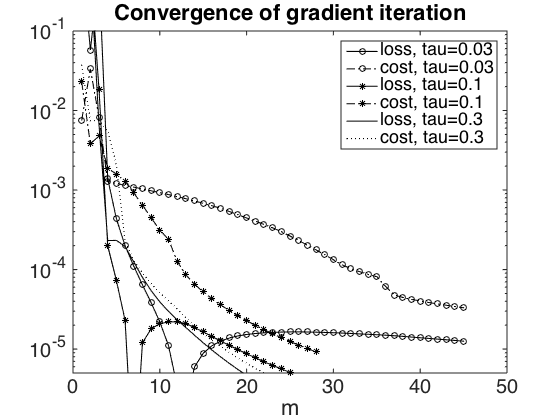}
              \caption{$\alpha=1.5$, $\gamma=1$, $N=800$, varying $\tau$}
              \label{subfig:PGM_tau}
            \end{subfigure}  
        \caption{Convergence of $C$ and $L$ over policy gradient iterations.}
        \label{fig:PGM_conv2}
\end{figure}

\section*{Declaration}

{\bf Funding:} 
The authors gratefully acknowledge financial support by the Vienna Science and Technology Fund (WWTF) under grant MA16-021 and  by the Austrian Science Fund (FWF) through grant Y 1235 of the START-program.

\bibliography{bibliography}
\bibliographystyle{amsplain}



\end{document}